\numberwithin{figure}{section}
\newtheorem{thm}{Theorem}[section]
\newtheorem{lem}[thm]{Lemma}
\newtheorem{coro}[thm]{Corollary}
\newtheorem{main thm}[thm]{Main Theorem}
\newtheorem{prop}[thm]{Proposition}
\newtheorem{defn}[thm]{Definition}
\newtheorem{rmk}[thm]{Remark}
\theoremstyle{problem}
\theoremstyle{Observe}
\theoremstyle{plain}
\theoremstyle{plain}
\numberwithin{equation}{section}
\def\<{\langle}
\def\>{\rangle}
\begin{document}
%=========================================================

\title{Nowhere-zero flows on signed supereulerian graphs}
\author{Chao Wen}
\address{CW:
State Key Laboratory of Public Big Data,
School of Mathematics and Statistics,
Guizhou University,
550025, Guiyang,
China.}
\email{wenchao19960712@163.com}
\author{Qiang Sun}
\address{QS:
School of Mathematical Science, Yangzhou University, 225002, Yangzhou, China.}
\email{qsun1987@163.com}

\author{Chao Zhang}
\address{CZ:
State Key Laboratory of Public Big Data,
School of Mathematics and Statistics,
Guizhou University,
550025, Guiyang,
China.}
\email{zhangc@amss.ac.cn}
%\dedicatory{}
\subjclass[2010]{05C21; 05C22}
\keywords{Nowhere-zero flows, Signed graph, Supereulerian graph, Hamiltonian graph, Abelian Cayley graph}
%\thanks{* The corresponding author.}

%=========================================================

\begin{abstract}
In 1983, Bouchet conjectured that every flow-admissible signed graph admits a nowhere-zero $6$-flow.
We verify this conjecture for the class of flow-admissible signed graphs possessing a spanning even Eulerian subgraph, which includes as a special case all signed graphs with a balanced Hamiltonian circuit.
Furthermore, we show that this result is sharp by citing a known infinite family of signed graphs with a balanced Hamiltonian circuit that do not admit a nowhere-zero $5$-flow.
Our proof relies on a construction that transforms signed graphs whose underlying graph admits a nowhere-zero $4$-flow into a signed $3$-edge-colorable cubic graph.
This transformation has the crucial property of establishing a sign-preserving bijection between the bichromatic cycles of the resulting signed cubic graph and certain Eulerian subgraphs of the original signed graph.
As an application of our main result, we also show that Bouchet's conjecture holds for all signed abelian Cayley graphs.

\end{abstract}

\maketitle

\section{Introduction}

  All graphs in this paper are finite, loopless and may have multiple edges.
  Set $[a,b]=\{x\in \mathbb{Z}: a\leq x\leq b\}$.
  For basic notation and terminology which are not defined here, we refer to \cite{BM08,Z97}.
  A nowhere-zero flow is a way of assigning an orientation and a nonzero value from an abelian group $A$ to each edge of a graph, such that the Kirchhoff current law is satisfied at every vertex.
  This law requires that the sum of values flowing into a vertex equals the sum of values flowing out of it.
  The concept of integer flow was introduced by Tutte \cite{T49,T54} when he observed that each nowhere-zero $k$-flow on a plane graph corresponds to a $k$-face-coloring of it, and vice versa.
  Jaeger \cite{J88} further demonstrated that if a graph $G$ has a $k$-face-colorable $2$-cell embedding in an orientable surface, then it admits a nowhere-zero $k$-flow.
  Therefore, nowhere-zero flow and face coloring can be seen as dual concepts.
  Due to the duality between local tensions and flows on graphs embedded in nonorientable surfaces, Bouchet \cite{B83} systematically developed an analogous concept of a nowhere-zero flow using bidirected edges instead of directed ones in 1983.
  Since signed graphs provide a convenient language for describing such embeddings, the nowhere-zero flow on a signed graph is generally used to represent the nowhere-zero flow introduced by Bouchet.

  Bouchet \cite{B83} conjectured in 1983 that every flow-admissible signed graph admits a nowhere-zero $6$-flow, wherein he proved that such signed graphs admit a nowhere-zero $216$-flow.
  This question has attracted a lot of attention since then.
  In 1987, Z\'{y}ka \cite{Z87} improved Bouchet's results to nowhere-zero $30$-flow.
  Recently, Z\'{y}ka's results were improved by DeVos et al. \cite{DLLLZZ21} to nowhere-zero $11$-flow, which is the best current general approach to Bouchet's conjecture.

  Our work focuses on a specific class of such graphs.
  Recall that a graph $G$ is \textit{supereulerian} if it contains a spanning Eulerian subgraph.
  We introduce the concept of an \textit{even Eulerian signed graph}, defined as a signed Eulerian graph containing an even number of negative edges. It is a known result that every supereulerian graph admits a nowhere-zero $4$-flow.
  This fact, combined with a recent theorem by Luo et al. \cite{LMSZ25} which builds upon the results of Li et al. \cite{LLLZZ23}, provides a baseline for our investigation.

\begin{thm}[\cite{LMSZ25}]\label{thm:8-flow}
Let $(G, \sigma)$ be a flow-admissible signed graph. If $G$ admits a nowhere-zero $4$-flow, then $(G, \sigma)$ admits a nowhere-zero $8$-flow.
\end{thm}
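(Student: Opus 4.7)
The plan is to upgrade a nowhere-zero integer $4$-flow on $G$ to a nowhere-zero $8$-flow on the signed graph $(G,\sigma)$ by doubling it and adding a bounded correction term that repairs the signed Kirchhoff law. Fix a reference orientation $\tau_0$ of $G$ and a nowhere-zero $4$-flow $\phi:E(G)\to\{\pm 1,\pm 2,\pm 3\}$ on $(G,\tau_0)$.

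First, convert $\tau_0$ into a bidirection $\tau$ of $(G,\sigma)$ by flipping, at each negative edge, exactly one of its two half-edge orientations so that the sign realized by $\tau$ agrees with $\sigma$. Doubling gives a function $2\phi$ with values in $\{\pm 2,\pm 4,\pm 6\}$, and a short computation shows that its signed boundary $\partial_\tau(2\phi)(v)$ is an integer multiple of $4$, supported on vertices incident to negative edges.

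Next, use the flow-admissibility of $(G,\sigma)$ together with the $\mathbb{Z}_2\oplus\mathbb{Z}_2$-decomposition $E(G)=F_1\cup F_2$ into two even subgraphs supplied by $\phi$ in order to construct a function $\psi:E(G)\to\{-1,0,+1\}$ with $\partial_\tau(\psi)(v) = -\partial_\tau(2\phi)(v)$ for every vertex $v$. Set $\Phi:=2\phi+\psi$. Since $2\phi\in\{\pm 2,\pm 4,\pm 6\}$ and $\psi\in\{0,\pm 1\}$, the values of $\Phi$ lie in $\{\pm 1,\ldots,\pm 7\}\setminus\{0\}$, and by construction $\partial_\tau(\Phi)\equiv 0$, so $\Phi$ is a nowhere-zero $8$-flow on $(G,\sigma)$.

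The main obstacle lies in the construction of $\psi$. The demanded boundary for $\psi$ can have magnitude exceeding $\deg(v)$ at a vertex incident to many negative edges, so a naive signed $T$-join argument does not suffice. The remedy is to exploit the freedom in $\phi$: adding a $\pm 2$-valued circulation along an even subgraph of $G$ modifies $\phi$ without destroying its $4$-flow property but alters $\partial_\tau(2\phi)$, and a careful sequence of such modifications — justified by flow-admissibility, which rules out the obstructive local configurations such as a bridge separating an unbalanced component — brings the demand into a range realizable by a $\pm 1$-valued correction. This redistribution step is where flow-admissibility and the existence of a $4$-flow on $G$ are simultaneously essential, and it adapts the techniques developed in \cite{LLLZZ23}.
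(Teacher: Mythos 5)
The paper does not prove this statement at all: Theorem~\ref{thm:8-flow} is quoted from \cite{LMSZ25}, and the machinery the paper itself builds (Lemma~\ref{lem-4-into-3} together with Theorem~\ref{equi-relation}) only reduces the $4$-NZF-admissible case to the $3$-edge-colorable cubic case. So your attempt has to be judged on its own as a proof of the cited theorem, and as it stands it has a genuine gap at its central step. Your setup is fine up to a point: fixing $\tau_0$, flipping one half-edge per negative edge, and observing that $\partial_\tau(2\phi)(v)$ lies in $4\mathbb{Z}$ is a correct computation, and the arithmetic $2\phi+\psi\in\{\pm1,\dots,\pm7\}$ would indeed yield an $8$-NZF \emph{if} a correction $\psi:E(G)\to\{0,\pm1\}$ with $\partial_\tau(\psi)=-\partial_\tau(2\phi)$ existed. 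But you never construct $\psi$. You correctly identify that the demand $-\partial_\tau(2\phi)(v)$ can have magnitude up to $12$ times the number of flipped negative half-edges at $v$, which can vastly exceed $\deg(v)$, the maximum achievable boundary of a $\{0,\pm1\}$-valued function; this is not a technical wrinkle but the entire content of the theorem. The proposed ``remedy'' is asserted, not proved: you claim that adding a $\pm2$-valued circulation to $\phi$ ``modifies $\phi$ without destroying its $4$-flow property,'' which is false as stated (a $4$-flow takes values in $\{\pm1,\pm2,\pm3\}$, and adding $\pm2$ on its support can produce values of absolute value $5$), and you give no argument that any sequence of admissible modifications can simultaneously bring the demand within range at \emph{every} vertex. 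The appeal to flow-admissibility is likewise unsubstantiated: flow-admissibility is a global covering condition (Proposition~\ref{equi-flow-admissible}), and the link between it and a local bound on $\partial_\tau(2\phi)$ is precisely what would need to be established.

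For comparison, the routes actually used in the literature (and echoed in Section~3 of this paper) avoid the doubling-plus-bounded-correction scheme entirely: one passes to a signed $3$-edge-colorable cubic graph via a sign-preserving expansion and then combines flows coming from the color classes and from a signed circuit cover, rather than trying to repair the signed Kirchhoff law pointwise. If you want to salvage your approach, the minimal missing ingredient is a lemma of the form: \emph{for a flow-admissible $(G,\sigma)$ with $4$-NZF-admissible underlying graph, one can choose the $4$-flow $\phi$, the set of flipped half-edges, and an integer-valued function $\psi$ with $|\psi|\le 1$ so that $\partial_\tau(2\phi+\psi)\equiv 0$.} That lemma is not obviously true and is certainly not proved by what you have written.
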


An immediate consequence of Theorem~\ref{thm:8-flow} is that every flow-admissible signed supereulerian graph admits a nowhere-zero $8$-flow.
The main contribution of this paper is to improve this bound for signed supereulerian graphs that contain a spanning even Eulerian subgraph. We prove that this class of signed graphs admits a nowhere-zero $6$-flow, thereby verifying Bouchet's conjecture in this special case.\\

  \noindent
    \textbf{Theorem \ref{see-6-flow}.} \emph{Let $(G,\sigma)$ be flow-admissible.
      If $(G,\sigma)$ has a spanning even Eulerian subgraph, then $(G,\sigma)$ admits a nowhere-zero $6$-flow.}\\

      If the spanning even Eulerian subgraph is a balanced Hamiltonian circuit, then the following theorem holds. \\

    \noindent
    \textbf{Theorem \ref{BH-6-flow}.} \emph{Let $(G,\sigma)$ be flow-admissible.
      If $(G,\sigma)$ has a balanced Hamiltonian circuit, then $(G,\sigma)$ admits a nowhere-zero $6$-flow.}\\

    Consider the signed cubic graph $(G_{n},\sigma_{n})$ derived from an even circuit \(C_{2n}\), where $n$ is odd positive integer.
    This signed graph is constructed by replacing every second edge with a pair of parallel edges and assigning a signature such that all single edges are positive, and exactly one edge in each pair of parallel edges is negative.
    Fig. \ref{BH has no 5-flow} illustrates the signed graph $(G_{3}, \sigma_{3})$.
    In our figures, negative edges are depicted by dashed lines.
    Note that $(G_{n},\sigma_{n})$ contains a balanced Hamiltonian circuit which is a spanning even Eulerian subgraph.
    M\'{a}\v{c}ajov\'{a} et al. \cite{MS15} and Schubert et al. \cite{SS15} independently proved that $(G_{n},\sigma_{n})$ admits a nowhere-zero $6$-flow but does not admit any nowhere-zero $5$-flow.
    Therefore, the value $6$ in Theorems \ref{see-6-flow} and \ref{BH-6-flow} is optimal.

    In order to prove Theorems \ref{see-6-flow} and \ref{BH-6-flow}, we introduce a method reduces the general case to the cubic case.
    More precisely, we construct a signed $3$-edge-colorable cubic graph from a signed $4$-NZF-admissible graph, where a graph is {\it $4$-NZF-admissible} if it admits a nowhere-zero $4$-flow.
    Note that every $3$-edge-colorable cubic graph is $4$-NZF-admissible.
    Utilizing this method, we prove the following two theorems.\\

    \noindent
    \textbf{Theorem \ref{equi-relation}.} \emph{Let $k$ be a positive integer. Then the following statements are equivalent:}

    {\rm(1)} \emph{Every flow-admissible signed $4$-NZF-admissible graph admits a nowhere-zero $k$-flow;}

    {\rm(2)} \emph{Every flow-admissible signed $3$-edge-colorable cubic graph admits a nowhere-zero $k$-flow.}\\

    For a specific class of signed $4$-NZF-admissible graphs, known as signed supereulerian graphs, and a specific class of signed $3$-edge-colorable cubic graphs, referred to as signed Hamiltonian cubic graphs, we present the following theorem.\\

    \noindent
    \textbf{Theorem \ref{equi-s-h-hc}.} \emph{
       Let $k$ be a positive integer. Then the following statements are equivalent:}

      {\rm(1)} \emph{Every flow-admissible signed supereulerian graph admits a nowhere-zero $k$-flow;}

      {\rm(2)} \emph{Every flow-admissible signed Hamiltonian graph admits a nowhere-zero $k$-flow;}

      {\rm(3)} \emph{Every flow-admissible signed Hamiltonian cubic graph admits a nowhere-zero $k$-flow.}\\

    %By combining the results of Li et al. \cite{LLLZZ23} with Theorems \ref{equi-relation} and \ref{equi-s-h-hc}, we show that every flow-admissible signed $4$-NZF-admissible graph admits a nowhere-zero $10$-flow, and every flow-admissible signed supereulerian graph admits a nowhere-zero $8$-flow.

    Moreover, we apply Theorem \ref{BH-6-flow} to prove the following theorem for signed abelian Cayley graphs, which are a class of signed Hamiltonian graphs.\\

   \noindent
    \textbf{Theorem \ref{AC-6-flow}.} \emph{Every flow-admissible signed abelian Cayley graph admits a nowhere-zero $6$-flow.}\\

    The value $6$ is optimal, as there exists a signed abelian Cayley graph without any nowhere-zero $5$-flow, as shown in Fig. \ref{AC has no 5-flow}.

   \begin{figure}[h]
  \centering
   \begin{minipage}[b]{.5\textwidth}
    \centering
    \begin{tikzpicture}[scale=0.6]
    \draw (0,0) circle (2);

     \draw[dashed] (-1,1.75) arc (0:-60:2);
     \draw[dashed] (1,1.75) arc (180:240:2);
    \draw[dashed] (1,-1.75) arc (60:120:2);
    %下面画点
   \fill (-1,1.75) circle (.1);
   \fill (-1,-1.75) circle (.1);
   \fill (1,1.75) circle (.1);
   \fill (1,-1.75) circle (.1);
   \fill (-2,0) circle (.1);
   \fill (2,0) circle (.1);

    \end{tikzpicture}
   \caption{$(G_{3},\sigma_{3})$.}\label{BH has no 5-flow}
     \end{minipage}%
     \begin{minipage}[b]{.5\textwidth}
    \centering
  \begin{tikzpicture}[scale=0.6]
    %\draw[step=1,help lines] ( -3,-3 ) grid ( 3, 3);

    %画线

    %外面的大正方形
    \draw[solid] (-2,2)--(2,2);%上

    \draw[dashed] (-2,2)--(-2,-2) ;%左

    \draw[solid] (-2,-2)--(2,-2);% 下

    \draw[solid] (2,2)--(2,-2);%右

    %中间的小正方形
    \draw[dashed] (-1,1)--(1,1);% 上

    \draw[solid] (-1,1)--(-1,-1);% 左

    \draw[solid] (-1,-1)--(1,-1) ;%下

    \draw[solid] (1,1)--(1,-1);%右

    %中间的匹配边
    \draw [solid](-2,2)--(-1,1) ;% 左上

     \draw[solid] (-2,-2)--(-1,-1);%左下

    \draw [dashed](2,-2)--(1,-1) ;% 右下

     \draw[solid] (2,2)--(1,1);%右上

  %下面画点
   \fill (-2,2) circle (.1);
   %\node at (-4,4)[left]{A};
   \fill (-2,-2) circle (.1);
   %\node at (-4,4)[left]{A};
   \fill (2,-2) circle (.1);
   %\node at (-4,4)[left]{A};
   \fill (2,2) circle (.1);
   %\node at (-4,4)[left]{A};

    \fill (-1,1) circle (.1);
   %\node at (-4,4)[left]{A};
   \fill (-1,-1) circle (.1);
   %\node at (-4,4)[left]{A};
   \fill (1,-1) circle (.1);
   %\node at (-4,4)[left]{A};
   \fill (1,1) circle (.1);
   %\node at (-4,4)[left]{A};

  \end{tikzpicture}
   \caption{A flow-admissible signed abelian Cayley graph without any nowhere-zero $5$-flow.}\label{AC has no 5-flow}
   \end{minipage}%
\end{figure}

  Inspired by the characterization of the flow number of signed Eulerian graphs \cite{MS17}, we characterize the flow number of a class of signed abelian Cayley graphs.
  The \textit{flow number} of $(G,\sigma)$, denoted by $\Phi(G,\sigma)$, is the minimum $k$ such that $(G,\sigma)$ admits a nowhere-zero $k$-flow.
  Let $E_{N}(G, \sigma)$ denote the set of negative edges in $(G, \sigma)$.
  \\

    \noindent
    \textbf{Theorem \ref{c-h-dec}.}
      \emph{Let $A$ be a finite abelian group of odd order and $\Gamma=Cay(A,S)$ is connected.
     If $(\Gamma,\sigma)$ is flow-admissible, then}

      \emph{{\rm(1)} $\Phi(\Gamma,\sigma)=2$ if and only if $\left| E_{N}(\Gamma,\sigma)\right|$ is even;}

      \emph{{\rm(2)} $\Phi(\Gamma,\sigma)=3$ if and only if $\left| E_{N}(\Gamma,\sigma)\right|$ is odd and $\frac{\left| S \right|}{2}\geq 3$;}

      \emph{{\rm(3)} $\Phi(\Gamma,\sigma)=4$ if and only if $\left| E_{N}(\Gamma,\sigma)\right|$ is odd and $\frac{\left| S \right|}{2}=2$.}
\\

  The organization of the rest of the paper is as follows.
  Basic notation and terminology are introduced in Section 2.
  In Section 3, we present the method that derives a signed $3$-edge-colorable cubic graph from a signed $4$-NZF-admissible graph.
  This section also includes the proofs of Theorems \ref{equi-relation}, and \ref{equi-s-h-hc}.
  Section 4 presents the proofs of Theorems \ref{see-6-flow} and \ref{BH-6-flow}, which establish sufficient conditions for a signed supereulerian graph to admit a nowhere-zero $6$-flow.
  As an application of Theorem \ref{BH-6-flow}, Theorem \ref{AC-6-flow} is proved in Section 5, which discusses signed abelian Cayley graphs.
  Additionally, Section 5 provides the characterization of the flow number of abelian Cayley graphs with an odd number of vertices, as stated in Theorem \ref{c-h-dec}.

\section{Notation and terminology}
  We write $G$ for a {\it graph}, with its vertex set and edge set denoted by $V(G)$ and $E(G)$, respectively.
  A {\it circuit} is a connected $2$-regular graph.
  A graph $G$ is said to be {\it even} if every vertex of $G$ has an even degree.
  A graph $G$ is called an \textit{Eulerian graph} if it is both connected and even.
  A graph $G$ is called {\it supereulerian} if it contains a spanning Eulerian subgraph.
  Specifically, a {\it Hamiltonian} graph is a supereulerian graph that contains a spanning circuit.

  A \textit{signed graph} is defined as $(G, \sigma)$, where $G$ is the underlying graph and $\sigma: E(G) \rightarrow \{\pm 1\}$ is a signature assigning a sign to each edge.
  An edge $e$ of $(G, \sigma)$ is \textit{positive} if \(\sigma(e) = +1\); otherwise, it is \textit{negative}.
  Recall that $E_{N}(G, \sigma)$ denote the set of negative edges in $(G, \sigma)$.
  A signed graph $(G, \sigma)$ is \textit{all-positive} if $E_{N}(G, \sigma) = \emptyset$.
  In this paper, ordinary graphs are considered as all-positive signed graphs.
  Let $F$ be a subgraph of $G$.
  The \textit{sign of $F$}, denoted by $\sigma(F)$, is the product of the signs of its edges.
  Specifically, let $\sigma(F) = +1$ if $E(F) = \emptyset$.
  A circuit $C$ is \textit{balanced} if $\sigma(C) = +1$, and \textit{unbalanced} otherwise.
  A signed graph \((G, \sigma)\) is called \textit{balanced} if there is no unbalanced circuit in \((G, \sigma)\), and \textit{unbalanced} otherwise.

  \textit{Switching} is an operation on a signed graph.
  For a vertex $v \in V(G)$, \textit{switching at $v$} negates the sign of each edge incident with $v$.
  For a vertex set $U$, \textit{switching at $U$} means switching all vertices in $U$.
  It is worth noting that switching does not change the sign of any circuit.
  If the signed graph $(G,\sigma')$ is obtained from $(G,\sigma)$ by a sequence of switchings, then we say that $(G,\sigma')$ is {\it switching equivalent} to $(G,\sigma)$.
  Switching equivalence is an equivalence relation.

  Two signed graphs $(G,\sigma)$ and $(H,\pi)$ are \textit{isomorphic}, denoted by $(G,\sigma)\cong(H,\pi)$ if there is an isomorphism $f$ from $G$ to $H$ such that for any circuit $C$ in $G$, $\sigma(C)=\pi(f(C))$.
  It is easy to see that if $f$ is an isomorphism from $G$ to $H$ such that $\sigma(e)=\pi(f(e))$ for any $e \in E(G)$, then $f$ is an isomorphism from $(G,\sigma)$ to $(H,\pi)$.

  Let $G_{1}$ be a subgraph of $G$.
  It is convenient to denote the signed graph $(G_{1},\sigma\mid_{E(G_{1})})$ by $(G_{1},\sigma)$, where $\sigma\mid_{E(G_{1})}$ is a restriction of $\sigma$ to $E(G_{1})$.

  A {\it signed circuit} is a signed graph that belongs to one of the following three types:

  (1) A balanced circuit;

  (2) A short barbell, which is the union of two unbalanced circuits that meet at a single vertex;

  (3) A long barbell, which is the union of two disjoint unbalanced circuits with a path that meets the circuits only at its ends.

  For an edge $e$ with two ends $u$ and $v$, it can be regarded as two half edges $h^{u}_{e}$ and $h^{v}_{e}$, where $h^{u}_{e}$ is incident with $u$ and $h^{v}_{e}$ is incident with $v$.
  Let $H(G)$ be the \textit{set of all half edges of $G$}, and $H_{G}(u)$ be the \textit{set of all half edges incident with $u$}.
  An {\it orientation} of $(G,\sigma)$ is a mapping $\tau : H(G)\rightarrow \{\pm 1\}$ such that $\tau(h^{u}_{e})\tau(h^{v}_{e})=-\sigma(e)$ for each edge $e\in E(G)$.
  For a half edge $h^{u}_{e}\in H(G)$, we say $h^{u}_{e}$ is {\it oriented away from $u$} if $\tau(h^{u}_{e})=+1$; otherwise $h^{u}_{e}$ is {\it oriented toward $u$}.

  \begin{defn}
    Let $(G,\sigma)$ be a signed graph, $A$ be an abelian group, and $\tau$ be an orientation of $(G,\sigma)$. Let $f: E(G)\rightarrow A$ be a function, and $k\geq2$ be an integer.

    {\rm(1)} For each vertex $v\in V(G)$, the {\it boundary} of $f$ at $v$ is $$\partial f(v)=\sum\limits_{h\in H_{G}(v)}\tau(h)f(e_{h}),$$ where $e_{h}$ is the edge of $G$ containing the half edge $h$.

    {\rm(2)} The \textit{support} of $f$, denoted by $supp(f)$, is the set of edges $e$ for which $f(e)$ is not equivalent to the identity element of $A$.

    {\rm(3)} Let $A = \mathbb{Z}$. Then the ordered pair $(\tau, f)$ is a \textit{$k$-flow} of $(G, \sigma)$ if $\partial f(v) = 0$ for each $v \in V(G)$ and $|f(e)| < k$ for each $e \in E(G)$. A $k$-flow $(\tau, f)$ is a \textit{nowhere-zero $k$-flow} if $supp(f) = E(G)$.

    {\rm(4)} Let $A = \mathbb{Z}_k$. Then the ordered pair $(\tau, f)$ is called a \textit{$\mathbb{Z}_k$-flow} of $(G, \sigma)$ if $\partial f(v) = 0$ for each vertex $v$. A $\mathbb{Z}_k$-flow $(\tau, f)$ is a \textit{nowhere-zero $\mathbb{Z}_k$-flow} if $supp(f) = E(G)$.
  \end{defn}

  For convenience, we abbreviate ``nowhere-zero $k$-flow" as $k$-NZF and ``nowhere-zero $\mathbb{Z}_{k}$-flow" as $\mathbb{Z}_{k}$-NZF.
  If the orientation is understood from the context, we use $f$ instead of $(\tau, f)$ to denote a flow.

  Switching at a vertex $v$ only reverses the directions of the half edges incident with $v$, while the directions of other half edges and the flow values of all edges remain unchanged.
  Thus, if $(G, \sigma)$ is switching equivalent to $(G, \sigma')$ and $(G, \sigma)$ admits a $k$-NZF, then $(G, \sigma')$ also admits a $k$-NZF.

  A signed graph is considered {\it flow-admissible} if it admits a $k$-NZF for some integer $k$.
  The following characterization of flow-admissible signed graphs can be found in \cite{B83, MS17}.
  \begin{prop}\cite{MS17}\label{equi-flow-admissible}
   The following statements are equivalent for every connected unbalanced signed graph $(G,\sigma)$:

  {\rm(a)} $(G,\sigma)$ is flow-admissible.

  {\rm(b)} The edges of $G$ can be covered with signed circuits.

  {\rm(c)} $(G,\sigma)$ has no edge $e$ such that $(G \setminus e, \sigma)$ has a balanced component.
  \end{prop}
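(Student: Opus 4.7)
The plan is to establish the cyclic implications (a) $\Rightarrow$ (c) $\Rightarrow$ (b) $\Rightarrow$ (a).

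For (a) $\Rightarrow$ (c), I argue by contrapositive. Suppose $(G\setminus e,\sigma)$ has a balanced component $H$; after switching $H$ so that every edge of $H$ is positive (which preserves flow admissibility), any integer flow $(\tau,f)$ satisfies $\sum_{u\in V(H)}\partial f(u)=0$. The contributions from edges internal to $H$ cancel, since each now-positive edge has opposite $\tau$-values at its two ends, reducing the sum to a nonzero integer multiple of $f(e)$ and forcing $f(e)=0$. This covers uniformly both subcases: when $e$ is a bridge and $H$ is one of the two resulting components (the reduced sum is $\pm f(e)$), and when $G-e=H$ is itself balanced so that $e$ is the unique negative edge of $G$ after switching (the reduced sum is $\pm 2f(e)$).

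For (c) $\Rightarrow$ (b), I fix an edge $e$ and build a signed circuit containing it. If some balanced circuit passes through $e$, we are done; otherwise every circuit through $e$ is unbalanced, so pick one such $C_1$. For each $e'\in E(C_1)$ the graph $G\setminus e'$ is connected (since $e'$ lies on $C_1$) and unbalanced by (c), hence contains an unbalanced circuit. By minimizing the intersection with $C_1$ among such second circuits, I obtain $C_2$ either disjoint from $C_1$ (then $C_1$, $C_2$ together with a shortest connecting path form a long barbell through $e$) or meeting $C_1$ in exactly one vertex (then $C_1\cup C_2$ is a short barbell through $e$). Producing this intersection pattern is the main obstacle and is precisely where hypothesis (c) is indispensable.

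For (b) $\Rightarrow$ (a), each type of signed circuit carries a nowhere-zero integer flow of small value: a balanced circuit admits a nowhere-zero $2$-flow (value $1$ on every edge with a consistent cyclic orientation after switching to all-positive); a short barbell admits a $2$-flow by assigning $+1$ around one unbalanced circuit and $-1$ around the other so that the boundaries $\pm 2$ at the shared vertex cancel; a long barbell admits a $3$-flow with values $\pm 1$ on each unbalanced circuit and $\pm 2$ on the connecting path, with orientations chosen so that the boundaries vanish at the path's endpoints. Given a signed-circuit cover $B_1,\dots,B_s$ with such flows $f_1,\dots,f_s$, the combination $f=\sum_i\lambda_i f_i$ satisfies $f(e)=\sum_{i:\,e\in B_i}\lambda_i f_i(e)$; substituting $\lambda_i=N^i$ turns each equation $f(e)=0$ into a nontrivial polynomial equation in $N$, so for $N$ larger than all integer roots of these finitely many polynomials, $f$ is nowhere-zero and hence a $k$-NZF for some $k$, establishing flow admissibility.
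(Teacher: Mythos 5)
The paper does not prove this proposition; it is quoted from M\'a\v{c}ajov\'a--\v{S}koviera \cite{MS17} (see also \cite{B83}), so there is no in-paper argument to compare yours against, and your proposal has to stand on its own. Two of your three implications do stand: (a) $\Rightarrow$ (c) by summing boundaries over the vertices of the balanced component is correct in both subcases (bridge, and $G\setminus e$ itself balanced with $e$ forced negative after switching), and (b) $\Rightarrow$ (a) is a correct and standard linear-combination argument, granting the routine step of re-expressing each $f_i$ with respect to one fixed orientation before summing.

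The genuine gap is in (c) $\Rightarrow$ (b), and it sits exactly where you flag it. First, a small omission: if $e$ is a bridge there is no circuit through $e$ at all, so ``pick one such $C_1$'' fails; this case must be handled separately (both components of $G\setminus e$ are unbalanced by (c), and one builds a long barbell through $e$ from an unbalanced circuit in each). Second, and more seriously, the claim that minimizing the intersection of an unbalanced circuit $C_2$ with $C_1$ forces $|V(C_1)\cap V(C_2)|\le 1$ is precisely the theorem's content, and the minimization does not deliver it by itself. If $u,v$ are two intersection vertices consecutive along $C_2$ and $Q$ is the arc of $C_2$ between them that is internally disjoint from $C_1$, the two circuits $Q\cup R_1$ and $Q\cup R_2$ (where $R_1,R_2$ are the arcs of $C_1$ from $u$ to $v$) have signs of product $\sigma(C_1)=-1$, so exactly one of them is unbalanced --- but that unbalanced reroute contains an entire arc of $C_1$ and need not have smaller intersection with $C_1$ than $C_2$ had, so the descent does not close. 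Making this work requires a more careful choice of what to minimize and a case analysis on signs (or an entirely different route, such as the block/ear-decomposition arguments used in the cited sources). As written, the central implication is asserted rather than proved.
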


  Based on statements (a) and (b) in Proposition \ref{equi-flow-admissible}, we have the following lemma.
  \begin{lem}
    Let $\mathcal{S}(G, \sigma)$ be the set of all signed circuits in the signed graph $(G, \sigma)$. Then $(G, \sigma)$ is flow-admissible if and only if

$$
\bigcup_{(C, \sigma) \in \mathcal{S}(G, \sigma)} E(C) = E(G),
$$
i.e., every edge of $G$ is covered by signed circuits.
  \end{lem}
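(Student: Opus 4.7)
My plan is to reduce the lemma to Proposition \ref{equi-flow-admissible}, which only treats the connected unbalanced case, and to handle the remaining configurations separately. Note that statement (b) of Proposition \ref{equi-flow-admissible}, ``the edges of $G$ can be covered with signed circuits,'' is literally the equation $\bigcup_{(C,\sigma)\in\mathcal{S}(G,\sigma)}E(C)=E(G)$; so in the connected unbalanced case the equivalence is already done.

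First I would verify that both conditions localize to connected components. Flow-admissibility of $(G,\sigma)$ is equivalent to flow-admissibility of every component, since one can combine $k$-NZFs on each component into a $k$-NZF on $(G,\sigma)$, and restrict in the opposite direction. Each signed circuit (balanced circuit, short barbell, or long barbell) is connected by definition, so its edge set lies entirely in a single component; hence the covering property on $(G,\sigma)$ is equivalent to the covering property on each component. This reduces the proof to the connected case.

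It then remains to handle a connected balanced signed graph $(G,\sigma)$. Such a graph is switching equivalent to the all-positive graph $(G,+1)$, and since switching preserves the existence of a $k$-NZF, $(G,\sigma)$ is flow-admissible if and only if $G$ admits an ordinary nowhere-zero integer flow, which is the classical condition that $G$ be bridgeless. On the other hand, because $(G,\sigma)$ contains no unbalanced circuit, short and long barbells cannot occur, and the elements of $\mathcal{S}(G,\sigma)$ are exactly the ordinary circuits of $G$. Therefore every edge of $G$ lies in some signed circuit if and only if $G$ has no bridge, matching the flow-admissibility criterion.

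The only mildly subtle point is identifying signed circuits with ordinary circuits in the balanced case and invoking the classical characterization of nowhere-zero flow existence for ordinary graphs; once these are in hand, the lemma is an immediate consequence of Proposition \ref{equi-flow-admissible}, so I do not anticipate a real obstacle.
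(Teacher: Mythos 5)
Your proposal is correct, and it follows the same basic route as the paper: the paper offers no written proof at all, simply asserting that the lemma follows from the equivalence of statements (a) and (b) in Proposition \ref{equi-flow-admissible}. Since that proposition is stated only for connected unbalanced signed graphs, your two supplementary steps---the reduction to components (using that signed circuits are connected and that flow-admissibility is a component-wise property) and the treatment of the connected balanced case (switching to the all-positive signature, identifying signed circuits with ordinary circuits, and invoking the classical ``nowhere-zero flow exists iff bridgeless'' and ``edge lies on a circuit iff it is not a bridge'' facts)---are exactly the bookkeeping needed to make the paper's assertion rigorous, and each step is sound.
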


  There is a direct corollary as follows.

  \begin{coro}
    The signed graph $(G,\sigma)$ is flow-admissible if and only if every edge of $G$ is contained in a flow-admissible signed subgraph of $(G,\sigma)$.
  \end{coro}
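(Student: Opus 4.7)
The plan is to derive the corollary directly from the preceding lemma, which characterizes flow-admissibility by the condition that every edge of $G$ is covered by a signed circuit. The two directions will be nearly immediate once one observes that (i) every signed circuit is itself a flow-admissible signed subgraph, and (ii) a signed circuit of a signed subgraph $H \subseteq (G,\sigma)$ is also a signed circuit of $(G,\sigma)$, since the convention $(H,\sigma) := (H,\sigma|_{E(H)})$ set up in Section 2 ensures that subgraphs inherit their signature by restriction.

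For the forward direction, I would assume $(G,\sigma)$ is flow-admissible and apply the lemma to obtain, for each edge $e \in E(G)$, a signed circuit $(C,\sigma)$ of $(G,\sigma)$ containing $e$. Each of the three possible types of signed circuit, namely a balanced circuit, a short barbell, and a long barbell, admits an explicit nowhere-zero integer flow (for instance, the value $1$ around a balanced circuit, and $\pm 2$ around each unbalanced circuit combined with $\pm 1$ along the connecting path of a barbell). Consequently $(C,\sigma)$ itself is a flow-admissible signed subgraph of $(G,\sigma)$ containing $e$, which is what is required.

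For the converse, I would suppose that for every $e \in E(G)$ there is a flow-admissible signed subgraph $H_e \subseteq (G,\sigma)$ with $e \in E(H_e)$. Applying the lemma to the flow-admissible signed graph $H_e$ gives a signed circuit in $H_e$ covering $e$, and by observation (ii) this is a signed circuit of $(G,\sigma)$ as well. Therefore
\[
\bigcup_{(C,\sigma)\in\mathcal{S}(G,\sigma)} E(C) = E(G),
\]
and a second application of the lemma yields that $(G,\sigma)$ is flow-admissible.

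There is no substantive obstacle in this argument; the only small point of hygiene is to keep the signatures matched when passing between $(G,\sigma)$ and its signed subgraphs, which is handled automatically by the restriction convention. Consequently the proof is essentially a one-line reduction to the preceding lemma, with the only auxiliary input being the routine fact that every signed circuit admits a nowhere-zero flow.
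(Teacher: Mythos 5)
Your proof is correct and follows exactly the route the paper intends: the paper presents this corollary as an immediate consequence of the preceding lemma and gives no further argument, while you supply the two routine observations (every signed circuit is itself flow-admissible, and a signed circuit of a signed subgraph is a signed circuit of $(G,\sigma)$ under the restriction convention) needed to make the reduction explicit.
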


    For any ordinary graph $G$, we define $E_{G}(v)=\{e\in E(G) : e\ is\ incident\ with\ v\}$, where $v$ is a vertex in $V(G)$.
    For $F \subseteq E_{G}(v)$, we denote by $G_{[v;F]}$ the graph obtained from $G$ by splitting all edges of $F$ away from $v$ and adding a new vertex $v'$ as the end of these edges.

    Note that, for a signed graph $(G,\sigma)$, the signature $\sigma$ is a function defined on $E(G)$, and splittings do not change the edge set.
    Thus, $(G_{[v;F]},\sigma)$ is a signed graph obtained from $(G,\sigma)$ by performing a splitting at $v$ with respect to $F$.
    Furthermore, if $(G_{[v;F]},\sigma)$ admits a $k$-NZF, then so does $(G,\sigma)$.
    %For an Eulerian graph $H$ and its Euler tour $T=v_{0}e_{1}v_{1}e_{2}\cdots v_{k}e_{0}v_{0}$.
    %Let $H^{i}={H^{i-1}}_{[v_{i};\{e_{i},e_{i+1}\}]}$ if $d_{H^{i-1}}(v_{i})>2$, otherwise $H^{i}=H^{i-1}$, where $i \in \{1,2,\cdots,k\}$ , $e_{k+1}=e_{0}$ and $H^{1}=H$.
    %Then, after a sequence of splittings, $H^{k}$ forms a circuit.
    %The following lemma was proven by Xu et al. \cite{XZ05}, but with this operation, we can provide a more concise proof.

    %\begin{lem}\cite{XZ05}
    %  If $(G,\sigma)$ is a signed Eulerian graph, and the number of negative edges in $(G,\sigma)$ is even, then $(G,\sigma)$ admits a $2$-NZF.
    %\end{lem}
    %\begin{proof}
    % Let $\mid E(G) \mid =k$ and let $T$ be an Euler tour of $G$.
    % Then there exists a graph $G^{k}$, corresponding to $T$, that forms a circuit, and $(G^{k},\sigma)$ is balanced.
    % Therefore, $(G^{k},\sigma)$ admits a $2$-NZF, and so does $(G,\sigma)$.
    %\end{proof}

    \section{Signed $4$-NZF-admissible graphs and $3$-edge-colorable cubic graphs}

    In this section, a method is developed for deriving a signed $3$-edge-colorable cubic graph from a signed $4$-NZF-admissible graph.
    Using this method, we establish an equivalence in the admission of $k$-NZF between signed $4$-NZF-admissible graphs and signed $3$-edge-colorable cubic graphs, as well as between signed supereulerian graphs and signed Hamiltonian cubic graphs.
    By applying these relationships, we show that every flow-admissible signed $4$-NZF-admissible graph admits a $10$-NZF, and every flow-admissible signed supereulerian graph admits an $8$-NZF.
    Furthermore, we apply these relationships to prove that signed supereuler graphs with a spanning even Eulerian graph admit a $\mathbb{Z}_{4}$-NZF.

    To achieve these results, we first introduce and explore several properties of $4$-NZF-admissible graphs.
    The following theorem illustrates how two $2$-flows contribute to our understanding of the structure of graphs that are $4$-NZF-admissible.

    \begin{thm}\cite{Z97}
      Let $G$ be a graph and $k_{1}$, $k_{2}$ be two integers.
      Then $G$ admits a nowhere-zero $k_{1}k_{2}$-flow if and only if $G$ admits a $k_{1}$-flow $f_1$ and a $k_{2}$-flow $f_2$ such that $supp(f_{1})\cup supp(f_{2}) = E(G)$.
    \end{thm}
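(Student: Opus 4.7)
The plan is to prove the two directions separately, using a ``base $k_1$'' composition in one direction and Tutte's group-independence theorem for nowhere-zero flows in the other.

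For the backward direction, given a $k_1$-flow $f_1$ and a $k_2$-flow $f_2$ (with respect to a common orientation) satisfying $supp(f_1)\cup supp(f_2)=E(G)$, I would set
\[
f=f_1+k_1 f_2.
\]
As an integer linear combination of flows, $f$ automatically satisfies Kirchhoff's law at every vertex. The bound
\[
|f(e)|\le|f_1(e)|+k_1|f_2(e)|\le(k_1-1)+k_1(k_2-1)=k_1 k_2-1
\]
shows that $f$ is a $k_1 k_2$-flow. It is nowhere-zero because $f(e)=0$ would force $f_1(e)=-k_1 f_2(e)$; since $|f_1(e)|<k_1$, this forces $f_2(e)=0$, and hence $f_1(e)=0$ as well, contradicting $e\in supp(f_1)\cup supp(f_2)$.

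For the forward direction, the key tool is Tutte's theorem that a graph admits a nowhere-zero $A$-flow for some finite abelian group $A$ if and only if it admits one for every abelian group of the same order. Since $|\mathbb{Z}_{k_1}\times\mathbb{Z}_{k_2}|=k_1 k_2$, a nowhere-zero $k_1 k_2$-flow on $G$ yields a nowhere-zero $\mathbb{Z}_{k_1}\times\mathbb{Z}_{k_2}$-flow on $G$, which splits coordinate-wise into a $\mathbb{Z}_{k_1}$-flow $g_1$ and a $\mathbb{Z}_{k_2}$-flow $g_2$. The nowhere-zero condition on the product flow translates exactly to $supp(g_1)\cup supp(g_2)=E(G)$. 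It then remains to promote each $g_i$ to an integer $k_i$-flow $f_i$ with $supp(f_i)=supp(g_i)$: let $H_i$ be the subgraph $(V(G),supp(g_i))$, observe that $g_i$ restricts to a nowhere-zero $\mathbb{Z}_{k_i}$-flow on $H_i$, apply Tutte's equivalence to obtain a nowhere-zero integer $k_i$-flow on $H_i$, and extend by $0$ on $E(G)\setminus supp(g_i)$.

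The one subtle point is this final lifting step: since $g_i$ itself may vanish on some edges, one cannot directly apply Tutte's integer-vs-modular equivalence to $G$. Localizing to the subgraph spanned by $supp(g_i)$, where $g_i$ \emph{is} nowhere-zero, and extending by zero afterwards sidesteps this issue and is routine. Everything else is essentially a translation between the ``$\mathbb{Z}_{k_1}\times\mathbb{Z}_{k_2}$'' and ``base $k_1$'' viewpoints on the same flow.
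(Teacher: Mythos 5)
Your proof is correct. The paper does not prove this statement itself — it is quoted verbatim from Zhang's book \cite{Z97} — and your argument (the composition $f_1+k_1f_2$ for sufficiency, and the $\mathbb{Z}_{k_1}\times\mathbb{Z}_{k_2}$ splitting plus Tutte's integer--modular lifting for necessity, localized to the subgraphs spanned by $supp(g_i)$ so that the lifting applies to a nowhere-zero flow) is precisely the standard proof given in that reference.
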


    Hence, the graph $G$ is $4$-NZF-admissible if and only if $G$ admits a $2$-flow $f_1$ and a $2$-flow $f_2$ such that $supp(f_{1})\cup supp(f_{2}) = E(G)$.
    Note that if $f$ is a $2$-flow in $G$, then $supp(f)$ induces an even subgraph in $G$.
    Consequently, a $4$-NZF-admissible graph can be covered by two even subgraphs.

    We need more notation and terminology.

    To {\it contract} an edge $e$ of a graph $G$ means to delete the edge $e$ and then identify its ends.
    The resulting graph is denoted by $G/e$.
    For $S\subseteq E(G)$, let $G/S$ denote the graph obtained from $G$ by contracting all edges of $S$.

    Let $C(G)$ be the set of components of graph $G$.
    The degree of a vertex $v$ in the graph $G$, denoted by $d_{G}(v)$.

    Let $X$ and $Y$ be two disjoint vertex sets of $G$.
    We denote by $E_{G}[X,Y]$ the set of edges of $G$ with one end in $X$ and the other end in $Y$.
    For a subgraph $H$ of $G$, denote the boundary of $H$, $E_{G}[V(H),V(G)\setminus V(H)]$, by $\partial(H)$.

    The following lemma presents the method  for deriving a signed $3$-edge-colorable graph from a signed
    $4$-NZF-admissible graph, while preserving certain properties.

    \begin{lem}\label{lem-4-into-3}
      Let $G$ be a $4$-NZF-admissible graph, with $f_{1}$ and $f_{2}$ being two $2$-flows on $G$ such that $supp(f_{1})\cup supp(f_{2}) = E(G)$.

      For a signed $4$-NZF-admissible graph $(G,\sigma)$,
      there exists a signed $3$-edge-colorable graph $(G',\sigma')$ such that the following statements hold:

      {\rm (1)} If $(G,\sigma)$ is flow-admissible, then $(G',\sigma')$ is flow-admissible;

      {\rm (2)} Let $H_{1}$ be a spanning subgraph of $G$ with edge set $supp(f_{1})$.
      There exists a $2$-factor $J$ of $G'$ such that there is a bijection $f:  C(H_{1}) \rightarrow  C(J) $, and for any $I \in C(H_{1})$, we have $\sigma(I)=\sigma'(f(I))$;

      {\rm (3)} Let $S=E(G')\setminus E(G)$.
      For any edge $e\in S$, we have $\sigma'(e)=+1$.
      Furthermore, $(G'/S,\sigma'\mid _{E(G'/S)})\cong(G,\sigma)$.
    \end{lem}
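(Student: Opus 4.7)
The plan is to build $G'$ by replacing each vertex $v$ of $G$ with a connected cubic local gadget $B_{v}$ whose external edges are precisely the edges of $G$ incident with $v$, and whose internal edges make up the set $S$; gluing the gadgets along the original edges of $G$ produces $G'$. The starting point is the standard equivalence between a nowhere-zero $4$-flow and a nowhere-zero $\mathbb{Z}_{2}\times\mathbb{Z}_{2}$-flow, which partitions $E(G)$ into three classes $A,B,C$ (edges with flow values $(1,0)$, $(0,1)$, $(1,1)$) so that any two of them together form an even subgraph. In particular $H_{1}=A\cup C$ and $H_{2}=B\cup C$ are both even; fixing an Eulerian circuit in each component of $H_{1}$ yields at every vertex $v$ a transition pairing $P_{1}(v)$ of the $H_{1}$-edges incident with $v$.

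Next, for each vertex $v$ with edge counts $(a_{v},b_{v},c_{v})$ of types $(A,B,C)$, I build the gadget $B_{v}$ together with a proper $3$-edge-coloring by colors $\mathrm{I},\mathrm{II},\mathrm{III}$ so that: (a) every $A$- or $C$-edge at $v$ is colored $\mathrm{I}$ or $\mathrm{II}$ and every $B$-edge is colored $\mathrm{III}$; (b) the two $H_{1}$-edges of each pair in $P_{1}(v)$ have their $B_{v}$-endpoints linked inside $B_{v}$ by a (possibly trivial) path of $S$-edges in colors $\mathrm{I}$ and $\mathrm{II}$; (c) $B_{v}$ is connected through its $S$-edges, which are all declared positive. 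The flow conditions force $a_{v}\equiv b_{v}\equiv c_{v}\pmod{2}$, so one can choose $k_{v}\geq \max(a_{v},b_{v},c_{v})$ of the correct parity, distribute the external edges so that each of the $k_{v}$ new gadget vertices carries at most one edge of each color, and complete every vertex to degree $3$ by $S$-edges in the missing colors; the color-by-color $S$-matchings have integer sizes $(k_{v}-a_{v})/2$, $(k_{v}-b_{v})/2$, $(k_{v}-c_{v})/2$, and the remaining freedom (in $k_{v}$ and in the $\mathrm{I}/\mathrm{II}$ split of $H_{1}$-originals) lets one realize $P_{1}(v)$ and keep $B_{v}$ connected.

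With $G'$ assembled, statement (3) is immediate: all $S$-edges are positive, and contracting $S$ collapses each $B_{v}$ back to $v$ with its original incident edges, so $(G'/S,\sigma'|_{E(G'/S)})\cong(G,\sigma)$. For statement (2), set $J:=E_{\mathrm{I}}\cup E_{\mathrm{II}}$, a $2$-factor of $G'$, and trace each Eulerian circuit of a component $I$ of $H_{1}$ through the gadgets: at each visit to $v$, the transition is realized by the prescribed $J$-path of $B_{v}$ from (b), yielding a single cycle $f(I)$ of $J$. Distinct components produce disjoint cycles, and the gadget design prevents any cycle of $J$ from lying entirely inside $S$, so $f\colon C(H_{1})\to C(J)$ is a bijection; moreover $\sigma'(f(I))=\sigma(I)$ because every $S$-edge used in $f(I)$ is positive. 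For statement (1), Proposition~\ref{equi-flow-admissible} turns flow-admissibility into the existence of a signed-circuit cover of $(G,\sigma)$; each signed circuit lifts to a signed circuit of $(G',\sigma')$ by rerouting through positive $S$-paths in the gadgets, and any $S$-edge not already hit is covered by a signed circuit either inside its own gadget or along $J$, which gives a signed-circuit cover of $G'$.

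The main obstacle is the gadget design in the second step: $B_{v}$ must simultaneously be cubic, $3$-edge-colorable, connected through $S$-edges, and compatible with the Eulerian pairing $P_{1}(v)$. All of these constraints hinge on the common parity of $(a_{v},b_{v},c_{v})$, which is exactly what makes the required $S$-matchings realizable; once the local gadget is described, verifying (1)--(3) reduces to bookkeeping along the Eulerian circuits of the components of $H_{1}$ and noting that every inserted $S$-edge is positive.
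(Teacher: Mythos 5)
Your overall strategy is the same as the paper's: replace each vertex of $G$ by a connected all-positive cubic gadget so that the components of $H_{1}$ become the bichromatic circuits of a $2$-factor $J$, with contraction of the added positive edges recovering $(G,\sigma)$. (The paper realizes the gadgets in two stages --- Euler-tour splittings followed by blow-ups of each split vertex into an all-positive circuit --- and then proves the $J$-circuits are even by observing that $\partial_{G}(I)$ is an edge cut of the even graph $H_{2}$; your version would get evenness for free from the explicit proper $3$-edge-coloring, which is a nice shortcut.) However, as written the proposal has two genuine gaps, and they sit exactly where the paper's proof does its real work. First, the gadget $B_{v}$ is never constructed: you list the properties it must have (cubic, properly $3$-edge-colored with the prescribed colors on external edges, connected through $S$, realizing the Euler transition pairing $P_{1}(v)$ by internal $\mathrm{I}$--$\mathrm{II}$ paths, and with no bichromatic cycle contained in $S$) and argue only that the three internal matchings have integer sizes by the parity $a_{v}\equiv b_{v}\equiv c_{v}\pmod 2$. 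Integrality of the matching sizes is necessary but far from sufficient; the simultaneous satisfaction of the pairing condition, the acyclicity of the internal $\mathrm{I}\cup\mathrm{II}$ union, and connectivity via $S$ requires an explicit construction (or at least an induction), and without it statements (2) and (3) are not established.

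Second, the flow-admissibility claim (statement (1)) is handled by the single sentence that ``each signed circuit lifts \ldots by rerouting through positive $S$-paths in the gadgets.'' This is exactly the step the paper spends three cases and several subcases on, and the care is not optional: for a short barbell whose two unbalanced circuits pass through the same vertex $v$, the two rerouting $S$-paths live in the same connected gadget $B_{v}$ and may intersect in an arbitrary subpath, in which case their union with the external edges is neither a balanced circuit nor a barbell, hence not a signed circuit. One must show that the attachment points can always be joined by paths (or by a single path plus a connector, turning a short barbell into a long one) whose union is again one of the three signed-circuit types; this depends on the concrete structure of $B_{v}$, which is precisely what is missing. So the plan is sound and parallels the paper, but the proposal as it stands defers the two essential constructions rather than carrying them out.
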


    \begin{proof}

      Since a cubic graph is $3$-edge-colorable if and only if it has a $2$-factor and each component of the $2$-factor forms an even circuit, our objective is to derive an even circuit from every component of $H_{1}$.
      Meanwhile, we must ensure that the flow-admissible property is maintained if $(G,\sigma)$ is flow-admissible.
      Thus, we may always assume that $(G,\sigma)$ is flow-admissible.

      Since a component $I$ of $H_{1}$ is Eulerian, $I$ has an Euler tour, denoted by $T$.
      Let $T=v_{0}e_{1}v_{1}e_{2}\cdots v_{k}e_{0}v_{0}$.
      We will use a series of splittings such that $I$ converts into a circuit.
      Let $G^{1}=G$.
      For  $x \in [2,k]$, $G^{x}={G^{x-1}}_{[v_{x};\{e_{x},e_{x+1}\}]}$ if $\left| E_{G_{x-1}}(v_{x})\cap E(I) \right|>2$, otherwise $G^{x}=G^{x-1}$, where $e_{k+1}=e_{0}$.
      This iteration will be carried out $k-1$ times,
      and the resulting graph is $G^{k}$.
      Meanwhile, the resulting signed graph is $(G^{k},\sigma)$.
      Let $I^{k}$ be the subgraph of $G^{k}$ induced by edge set $E(I)$.
      Since $I$ is Eulerian and $T$ is the Euler tour of $I$, we have $I^{k}$ is a circuit.
      Note that $(G^{k},\sigma)$ may not be flow-admissible.
      We will add some positive edges to ensure the property of flow-admissibility.

      For any $v\in V(G)$, let $\{v'_{0},v'_{1}, v'_{2},\cdots, v'_{l} \}$, where $l\geq 0$ and $v'_{0}=v$, be a vertex set whose elements are obtained by splitting $v$.
      If $l=1$, we add two positive edges (multiple edges) to connect $v'_{0}$ and $v'_{1}$.
      If $l>1$, we add $\frac{l(l+1)}{2}$ positive edges to $(G^{k},\sigma)$ such that $\{v'_{0}, v'_{1}, v'_{2},\cdots, v'_{l} \}$ induce an all-positive complete graph.
      Denote the new signed graph by $(G_{1},\sigma_{1})$.
      Since all-positive digons and all-positive complete graphs are flow-admissible, the added positive edges are covered by flow-admissible subgraphs.
      Next, we need to verify that every edge in $E(G)$ is also covered by a flow-admissible subgraph.

      More precisely, since $(G,\sigma)$ is flow-admissible, every edge in $E(G)$ is covered by a signed circuit in $(G,\sigma)$.
      Thus, our goal is to show that each signed circuit in $(G,\sigma)$ can be extended to a signed circuit in $(G_{1},\sigma_{1})$.
      Let $(C,\sigma)$ be a signed circuit of $(G,\sigma)$ and $(C',\sigma_{1})$ be subgraph of $(G_{1},\sigma_{1})$ induced by $E(C)$.
      If $(C,\sigma)\cong(C',\sigma_{1})$, then we are done.
      Hence, we may always assume that $(C,\sigma)$ is not isomorphic to $(C',\sigma_{1})$, where there are three cases as follows.\\

      \textbf{Case 1.} $(C,\sigma)$ is a balanced circuit in $(G,\sigma)$.

      Since $C'$ is obtained from $C$ by a sequence of splittings and $C$ is a circuit, the vertices of $C'$ have degree $2$ or $1$.
      Let $v_{1}$ be a vertex of degree $1$ in $C'$.
      Then $v_{1}$ is split from a vertex $v$ in $C$.
      Since $v$ has degree $2$ in $C$, there is another vertex, say $v_{2}$, in $C'$ that is also split from $v$.
      It is easy to see that $d_{C'}(v_{2})=1$.
      Note that there is a positive edge $v_{1}v_{2}$ in $(G_{1},\sigma_{1})$.
      Then we add the positive edge $v_{1}v_{2}$ to $C'$, and we still denote the resulting graph by $C'$.
      We repeat this operation until there are no vertices of degree $1$ in $C'$.
      Then $(C',\sigma_{1})$ is a balanced circuit in $(G_{1},\sigma_{1})$.\\

      \textbf{Case 2.} $(C,\sigma)$ is a short barbell in $(G,\sigma)$.

      For two vertices of $C'$ that are split from a vertex of degree $2$ in $C$, we can add a positive edge to connect them, as in Case 1.
      Hence, for convenience, we may assume that only the vertex of degree $4$ in $C$ was split.
      Let $v$ be the vertex of degree $4$ in $C$, which split into several vertices in $C'$.\\

      \textbf{Subcase 2.1.} The vertex $v$ has been split into two vertices $v_{1}$ and $v_{2}$ in $C'$.

      We only need to consider the following two cases.
      Namely, $d_{C'}(v_{1})=d_{C'}(v_{2})=2$, or one of $d_{C'}(v_{1})$ and $d_{C'}(v_{2})$ is $3$ and the other is $1$.

      If $d_{C'}(v_{1})=d_{C'}(v_{2})=2$, then $(C',\sigma_{1})$ is either a balanced circuit or a union of two unbalanced circuits.
      For the first case, $(C',\sigma_{1})$ is already a signed circuit.
      For the second case, adding a positive edge $v_{1}v_{2}$ to $(C',\sigma_{1})$, the resulting signed subgraph is a long barbell in $(G_{1},\sigma_{1})$.

      If one of $d_{C'}(v_{1})$ and $d_{C'}(v_{2})$ is $3$ and the other is $1$, then we add a positive edge $v_{1}v_{2}$ of $(G_{1},\sigma_{1})$ to $(C',\sigma_{1})$.
      The resulting signed subgraph is a short barbell in $(G_{1},\sigma_{1})$.\\

      \textbf{Subcase 2.2.} The vertex $v$ has been split into three vertices $v_{1}$, $v_{2}$ and $v_{3}$ in $C'$.

      Since $d_{C}(v)=4$, there is a vertex $v_{i}$ that has degree $2$ in $C'$, $i\in [1,3]$, say $v_{1}$.
      Then $d_{C'}(v_{2})=d_{C'}(v_{3})=1$.
      Then either $(C',\sigma_{1})$ is a path with positive sign or a union of a path with negative sign and an unbalanced circuit.

      For the first case, adding a positive edge $v_{2}v_{3}$ to $(C',\sigma')$, the resulting signed subgraph is a balanced circuit in $(G_{1},\sigma_{1})$.

      For the second case, adding two positive edges $v_{1}v_{2}$ and $v_{2}v_{3}$ to $(C',\sigma_{1})$, the resulting signed subgraph is a long barbell in $(G_{1},\sigma_{1})$.\\

      \textbf{Subcase 2.3.} The vertex $v$ has been split into four vertices $v_{1}$, $v_{2}$, $v_{3}$ and $v_{4}$ in $C'$.

      Since $d_{C}(v)=4$, we have $d_{C'}(v_{1})=d_{C'}(v_{2})=d_{C'}(v_{3})=d_{C'}(v_{4})=1$.
      Hence, $(C',\sigma_{1})$ is a union of two paths with negative sign, say $(P_{1},\sigma_{1})$ and $(P_{2},\sigma_{1})$.
      Without loss of generality, let $\{v_{1}, v_{2}\}\subseteq V(P_{1})$, and $\{v_{3},v_{4}\}\subseteq V(P_{2})$.
      Then $v_{1}$ ,$v_{2}$ ,$v_{3}$ and $v_{4}$ are ends of $P_{1}$ and $P_{2}$, respectively.
      By adding two positive edges $ v_1v_3 $ and $ v_2v_4 $ to $ (C', \sigma_1) $, the resulting signed subgraph forms a balanced circuit in $ (G_1, \sigma_1) $.
      \\
      %Then $E(C)$ induces either a short barbell, a balanced circuit, or another subgraph in $(G',\sigma)$.
      %We will only consider the last case.
      %The subgraph induced by $E(C)$, along with the added positive edges, can be extended to form either a short barbell or a long barbell.

      \textbf{Case 3.} $(C,\sigma)$ is a long barbell in $(G,\sigma)$.

      For convenience, we assume that only the vertices of degree $3$ in $ C $ have been split.
      By symmetry, we can further assume that only one vertex of degree $3$, say $ v $, has been split.

      Let $(C^{*},\sigma)$ be the unbalanced circuit in $(C,\sigma)$ that contained $v$ as a vertex, and $(P^{*},\sigma)$ be the path in $(C,\sigma)$ that meets the unbalanced circuits only at its ends.
      Let $(L,\sigma_{1})$ be a subgraph of $(C',\sigma_{1})$, where $L$ is induced by $E(C^{*})\cup E(P^{*})$.\\

      \textbf{Subcase 3.1.} The vertex $v$ has been split into two vertices $v_{1}$ and $v_{2}$ in $C'$.

      Since $ d_C(v) = 3 $, there exists a vertex $ v_i $ of degree 2 in $ C' $, where $ i \in [1, 2] $, say $ v_1 $. Consequently, the degree of $ v_2 $ is 1.
      The signed graph $ (L, \sigma_1) $ is either a path or a union of an unbalanced circuit and a path.
      Therefore, adding a positive edge $ v_1v_2 $ to $ (C', \sigma_1) $, the resulting signed subgraph forms a long barbell in $ (G_1, \sigma_1) $.\\

      \textbf{Subcase 3.2.} The vertex $v$ has been split into three vertices $v_{1}$, $v_{2}$ and $v_{3}$ in $C'$.

      Since $d_{C}(v)=3$, we have $d_{C}(v_{1})=d_{C}(v_{2})=d_{C}(v_{3})=1$.
      Then $(L,\sigma_{1})$ is a union of a path and a path with negative sign, say $(P_{1},\sigma_{1})$ and $(P_{2},\sigma_{1})$.
      Without loss of generality, let $ v_1 $ be an end in $ P_1 $, and let $ v_2 $ and $ v_3 $ be ends in $ P_2 $.
      Then, when we add two positive edges $v_{1}v_{2}$ and $v_{1}v_{3}$ to $(C',\sigma_{1})$, the resulting signed subgraph is a long barbell in $(G_{1},\sigma_{1})$.
      \\

      Since every edge of $(G_{1},\sigma_{1})$ is contained in a flow-admissible signed subgraph, $(G_{1},\sigma_{1})$ is also flow-admissible.

      Next, we aim to transform all the vertices in $V(I^{k})$ into vertices of degree $3$ through a series of blow up.
      For technical reasons, a digon is considered a circuit of length $2$, denoted by $C_{2}$.
      For a vertex $v\in V(I^{k})$, we replace $v$ by an all-positive circuit $(C_{v},+)$ of length $d(v)$ and define the incidence relation between the edges of $E_{G_{1}}(v)$ and the vertices of $(C_{v},+)$ as follows.
      Let the two edges in $E(I^{k})\cap E_{G_{1}}(v)$ be incident with $v_{1}$ and $v_{2}$, respectively, where $v_{1}$ and $v_{2}$ are adjacent in $C_{v}$.
      Then, $E(I^{k})$ combined with $E(C_{v})\setminus \{v_{1}v_{2}\}$ can be extended to form a new circuit that contains all vertices of $I^{k}$ and $C_{v}$.
      Note that $\left| E_{G_{1}}(v)\setminus E(I^{k})\right|=\left| V(C_{v})\setminus\{v_{1},v_{2}\}\right|=d_{G_{1}}(v)-2$.
      Let $\varphi$ be an arbitrary bijection from $E_{G_{1}}(v)\setminus E(I^{k})$ to $V(C_{v})\setminus\{v_{1},v_{2}\}$.
      Then an edge $e \in E_{G_{1}}(v)\setminus E(I^{k})$ is incident with a vertex $v\in V(C_{v})\setminus\{v_{1},v_{2}\}$ if and only if $\varphi(e)=v$.
      Since $(C_{v},+)$ is a balanced circuit, every edge of $E(C_{v})$ is covered by a signed circuit.
      For any signed circuit in $(G_{1},\sigma_{1})$ containing $v$, it is easy to verify that replacing vertex $v$ by an all-positive circuit still maintains flow-admissibility.
      Therefore, every edge of the resulting signed graph is contained in a flow-admissible subgraph, i.e., the resulting signed graph is flow-admissible.
      We repeat this operation for all vertices of $I^{k}$.
      Denote the new circuit obtained from $ I^k $ by $ I_2 $, and the resulting signed graph by $ (G_2, \sigma_2) $.
      It is easy to see that every vertex in $V(I_{2})$ has degree $3$ in $G_{2}$ and $(G_{2},\sigma_{2})$ is flow-admissible.

      It remains to prove that $I_{2}$ is an even circuit.
      Note that any edge which is incident with two vertices of $I_{2}$ contributes an even number of vertices to $I_{2}$.
      Thus, in graph $G_{2}$, we only need to consider the number of edges that have only one end in $I_{2}$.
      We need to note that the series of operations we performed to transform $I$ into $I_{2}$ do not change the number of edges connecting this component to the outside.
      Hence, we only need to consider the number of edges that have only one end in $I$ in graph $G$, i.e., the number of elements in the boundary $\partial_{G}(I)$.
      Recall that $f_{1}$ and $f_{2}$ are two $2$-flows in $G$ such that $supp(f_{1})\cup supp(f_{2}) = E(G)$, and $H_{i}$ is a spanning subgraph in $G$ with edge set $supp(f_{i})$, $i\in \{1,2\}$, and $I$ is a component of $H_{1}$.
      Thus, $\partial_{G}(I)\subseteq E(H_{2})$.
      Since $\partial_{G}(I)$ is an edge-cut of $G$, we have that $\partial_{G}(I)$ is also an edge-cut of $H_{2}$.
      Note that $H_{2}$ is an even graph.
      Therefore, $\left| \partial_{G}(I)\right|$ is even.
      Thus, all edges of $\partial_{G}(I)$ contribute an even number of vertices to $I_{2}$.
      Hence, $I_{2}$ is an even circuit.

      The process of converting $(I, \sigma)$ into $(I_{2}, \sigma_{2})$ is called $3$-regularizing of $I$, and we call $I_{2}$ the $2$-normal graph of $I$.
      Let $(G', \sigma')$ be the signed graph obtained from $(G, \sigma)$ by $3$-regularizing all components of $H_{1}$, and let $(J, \sigma')$ be the union of the $2$-normal graphs of all components of $H_{1}$.
      It is easy to see that $(G',\sigma')$ is a flow-admissible signed cubic graph and $J$ is a $2$-factor of $G'$.
      Since every component of $J$ is an even circuit, $(G',\sigma')$ is a flow-admissible signed $3$-edge-colorable cubic graph.
      Thus, Statement {\rm (1)} holds.

      Each component in $H_{1}$ has a unique $2$-normal graph in $J$, which is a component in $J$.
      Conversely, every component in $J$ is obtained from a component in $H_{1}$ by $3$-regularizing.
      Therefore, there exists a natural bijection $f:C( H_{1})\rightarrow C(J)$ such that for any $I \in C(H_{1})$, $f(I)$ is a component of $J$ obtained from $I$ by $3$-regularizing.
      Since every edge we added is positive, we have $\sigma(I)=\sigma'(f(I))$.
      Consequently, Statement (2) holds.

      It is easy to see that $S=E(G')\setminus E(G)$ is the set of all the edges we added.
      Thus, for any edge $e\in S$, we have $\sigma'(e)=+1$.
      In order to show the structure of $(G'/S,\sigma'\mid _{E(G'/S)})$, we will show that there is a decomposition $\{E(S_{u}):u\in V(G)\}$ of $S$, where $(S_{u},\sigma')$ is an induced all-positive subgraph in $(G',\sigma')$.
      Next, we introduce the vertex set of the graph $S_{u}$.

      Let $u$ be a vertex of $G$ that is in a component $I'$ of $H_{1}$.
      In the process of 3-regularizing $I'$, the vertex $ u $ is initially split into $\frac{d_{I'}(u)}{2}$ vertices, denoted by $ u_0, u_1, u_2, \ldots, u_{\frac{d_{I'}(u)}{2}-1} $, where $ u_0 = u $.
      Subsequently, for any $i\neq 0$, each vertex $u_{i}$ is blown up into $\frac{d_{I'}(u)}{2}+1$ vertices $u_{i}^{1}, u_{i}^{2}, \ldots, u_{i}^{\frac{d_{I'}(u)}{2}+1}$ if $d_{I'}(u)\neq4$, and each vertex $u_{i}$ is blown up into $\frac{d_{I'}(u)}{2}+2=4$ vertices $u_{i}^{1}, u_{i}^{2}, u_{i}^{3}, u_{i}^{4}$ if $d_{I'}(u)=4$.
      For $i=0$, vertex $u_{0}$  is blown up into $d_{G}(u)-\frac{d_{I'}(u)}{2}+1$ vertices $u_{0}^{1}, u_{0}^{2}, \ldots, u_{0}^{d_{G}(u)-\frac{d_{I'}(u)}{2}+1}$ if $d_{I'}(u)\neq4$, and vertex $u_{0}$ is blown up into $d_{G}(u)-\frac{d_{I'}(u)}{2}+2=d_{G}(u)$ vertices $u_{0}^{1}, u_{0}^{2}, \ldots, u_{0}^{d_{G}(u)}$ if $d_{I'}(u)=4$.
      Let $B(u_{i})$ be the set of vertices blown up from $u_{i}$, $i\in [0,\frac{d_{I'}(u)}{2}-1]$.
      Therefore, the vertex set $V(S_{u})=\bigcup_{i\in[0,\frac{d_{I'}(u)}{2}-1]}B(u_{i})$.

      Note that every edge of $S_{u}$ is an element in $S$.
      Thus, $(S_{u},\sigma')$ is all-positive.
      Conversely, for any edge $e \in S$, the edge $e$ is an element in some $E(S_{u})$, where $u\in V(G)$.
      Thus, $\bigcup_{u\in V(G)}E(S_{u})=S$.
      It is evident that and $V(S_{y})\cap V(S_{z})=\emptyset$ if $y$ and $z$ are distinct vertices of $G$.
      Therefore, $\{E(S_{u}):u\in V(G)\}$ is a decomposition of $S$.
      Furthermore, $\partial_{G'}(S_{u})=\partial_{G}(u)$, for any vertex $u\in V(G)$.

      Let $u'$ be the vertex in the graph $G'/S$ obtained by contracting the edges in the set $ E(S_u) $.
      Define a mapping $g: G'/S\rightarrow G$ such that $g(u')=u$.
      Consider $a$ and $b$ as two distinct vertices of $G$.
      Then $E_{G'}[V(S_{a}),V(S_{b})]=E_{G}[\{a\},\{b\})]$.
      Thus, $g$ establishes an isomorphism between $G'/S$ and $G$.

      If $G$ has no multiple edges, then $g$ also acts as an isomorphism between $(G'/S,\sigma'\mid _{E(G'/S)})$ and $(G,\sigma)$, where $\sigma'\mid _{E(G'/S)}$ denotes the restriction of $\sigma'$ to $E(G'/S)$.
      Note that $E(G'/S)=E(G)$.
      In cases where $G$ contains multiple edges, let $g(e)=e$.
      Then $g$ remains an isomorphism between $(G'/S,\sigma'\mid _{E(G'/S)})$ and $(G,\sigma)$.
      Therefore, $(G'/S,\sigma'\mid _{E(G'/S)})\cong(G,\sigma)$, validating Statement (3).
    \end{proof}

    \begin{rmk}
    We note that a similar reduction method was introduced in \cite{LMSZ25}.
     However, our reduction distinguishes itself by explicitly transforming Eulerian subgraphs in 4-NZF-admissible graphs into bichromatic circuits in the resulting 3-regular graph, while crucially preserving the sign of these Eulerian subgraphs throughout the transformation.
    \end{rmk}

    Let $ f $ be a $ k $-NZF of the signed graph $(G, \sigma)$, and let $ S $ be a set of positive edges in $(G, \sigma)$. Consider the signed graph $(G/S, \sigma|_{G/S})$, which is obtained by contracting all edges in $ S $.
    For simplicity, we denote it by $(G/S, \sigma)$.
    After contracting the edges in $ S $, there exists a $ k $-NZF, denoted by $ f|_{G/S} $, in $(G/S, \sigma)$. Here, $ f|_{G/S} $ represents the restriction of $ f $ to the edge set $ E(G/S) $.
    Let us recall Theorem \ref{equi-relation}.

    \begin{thm}\label{equi-relation}
      Let $k$ be a positive integer. Then the following statements are equivalent:

    {\rm(1)} Every flow-admissible signed $4$-NZF-admissible graph admits a nowhere-zero $k$-flow;

    {\rm(2)} Every flow-admissible signed $3$-edge-colorable cubic graph admits a nowhere-zero $k$-flow.
    \end{thm}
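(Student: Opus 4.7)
The plan is to establish the two implications separately, one trivial and one leaning entirely on Lemma~\ref{lem-4-into-3}. The implication $(1) \Rightarrow (2)$ is immediate: every 3-edge-colorable cubic graph admits a proper edge coloring by the three nonzero elements of $\mathbb{Z}_2 \times \mathbb{Z}_2$, which induces a nowhere-zero 4-flow. Hence the class of signed 3-edge-colorable cubic graphs is a subclass of the class of signed 4-NZF-admissible graphs, and hypothesis (1) specializes directly to yield (2).

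For $(2) \Rightarrow (1)$, given an arbitrary flow-admissible signed 4-NZF-admissible graph $(G,\sigma)$, I would apply Lemma~\ref{lem-4-into-3} to produce a flow-admissible signed 3-edge-colorable cubic graph $(G',\sigma')$. By hypothesis (2), there is a nowhere-zero $k$-flow $f$ on $(G',\sigma')$. Setting $S = E(G') \setminus E(G)$, Statement (3) of the lemma guarantees that every edge of $S$ is positive and that $(G'/S,\, \sigma'|_{E(G'/S)}) \cong (G,\sigma)$. The remaining task is to check that $f|_{E(G'/S)}$ is a nowhere-zero $k$-flow on the contracted signed graph; transporting this flow through the isomorphism then delivers the required nowhere-zero $k$-flow on $(G,\sigma)$.

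To verify the contraction step, observe that for any positive edge $e = uv \in S$ the orientation $\tau$ satisfies $\tau(h^u_e)\tau(h^v_e) = -1$, so when $u$ and $v$ are identified into a merged vertex $w$, the two contributions $\tau(h^u_e)f(e)$ and $\tau(h^v_e)f(e)$ cancel, and the new boundary at $w$ equals $\partial f(u) + \partial f(v) = 0$. Iterating this argument across all edges of $S$ preserves Kirchhoff's law at every merged vertex, while the conditions $f(e) \neq 0$ and $|f(e)| < k$ persist unchanged on $E(G'/S) = E(G)$. The principal conceptual work is entirely absorbed by Lemma~\ref{lem-4-into-3}, which is already proved; once that lemma is in hand, this equivalence becomes the short verification sketched above, and no further obstacle arises.
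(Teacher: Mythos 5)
Your proposal is correct and follows essentially the same route as the paper: the forward implication via the observation that $3$-edge-colorable cubic graphs are $4$-NZF-admissible, and the reverse implication via Lemma~\ref{lem-4-into-3} followed by contraction of the added positive edge set $S$. Your explicit verification that contracting a positive edge preserves the boundary condition is a detail the paper only states in passing before the theorem, but it matches the intended argument.
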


    \begin{proof}
      It is straightforward that ${\rm(1)}$ implies ${\rm(2)}$ since every $3$-edge-colorable cubic graph is $4$-NZF-admissible.
      Therefore, we only need to prove that ${\rm(2)}$ implies ${\rm(1)}$.

      Let $(G,\sigma)$ be a flow-admissible signed $4$-NZF-admissible graph.
      According to Lemma \ref{lem-4-into-3}, there exists a flow-admissible signed $3$-edge-colorable cubic graph $(G',\sigma')$ such that $(G'/S,\sigma')\cong(G,\sigma)$, where $S$ is a set of positive edges.
      Since every flow-admissible signed $3$-edge-colorable cubic graph admits a $k$-NZF, $(G',\sigma')$ admits a $k$-NZF.
      Consequently, $(G'/S, \sigma')$ admits a $ k $-NZF, and therefore, so does $(G, \sigma)$.
    \end{proof}

  %  Li et al. \cite{LLLZZ23} study the nowhere-zero flows of flow-admissible signed $3$-edge-colorable cubic graphs as follows.

  %  \begin{thm}\cite{LLLZZ23}\label{3-10-flow}
  %    Every flow-admissible signed $3$-edge-colorable cubic graph admits a nowhere-zero $10$-flow.
  %  \end{thm}

  %  Consequently, Theorem \ref{4FA-10-flow} follows directly as a corollary of Theorems \ref{equi-relation} and \ref{3-10-flow}.

   % \begin{thm}\label{4FA-10-flow}
  %   Every flow-admissible signed $4$-NZF-admissible graph admits a nowhere-zero $10$-flow.
  % \end{thm}

    Let $G_{1}, G_{2}, \cdots, G_{t}$ be subgraphs of $G$.
    The notation $ G_{1} \bigtriangleup G_{2} \bigtriangleup \cdots \bigtriangleup G_{t} $ represents the symmetric difference of these subgraphs.
    The following theorem shows the equivalence in the admission of $k$-NZF among signed supereulerian graphs, signed Hamiltonian graphs and signed Hamiltonian cubic graphs.

    \begin{thm}\label{equi-s-h-hc}
       Let $k$ be a positive integer. Then the following statements are equivalent:

      {\rm(1)} Every flow-admissible signed supereulerian graph admits a nowhere-zero $k$-flow;

      {\rm(2)} Every flow-admissible signed Hamiltonian graph admits a nowhere-zero $k$-flow;

      {\rm(3)} Every flow-admissible signed Hamiltonian cubic graph admits a nowhere-zero $k$-flow.
    \end{thm}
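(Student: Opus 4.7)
The plan is to verify the trivial chain of implications $(1)\Rightarrow(2)\Rightarrow(3)$ by nesting (every Hamiltonian cubic graph is Hamiltonian, and every Hamiltonian graph is supereulerian, so any assumption shrinks the class of graphs for which we demand a $k$-NZF), and then to establish $(3)\Rightarrow(1)$ by an application of Lemma \ref{lem-4-into-3}, parallel to the argument used in Theorem \ref{equi-relation} but now carefully tracking the number of components.

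For $(3)\Rightarrow(1)$, let $(G,\sigma)$ be a flow-admissible signed supereulerian graph and let $H$ be a spanning Eulerian subgraph of $G$. The key preparatory step is to exhibit two $\mathbb{Z}_2$-flows $f_1,f_2$ on $G$ with $supp(f_1)\cup supp(f_2)=E(G)$ such that $supp(f_1)=E(H)$. I would take $f_1$ to be the $\mathbb{Z}_2$-flow supported exactly on $H$ (which exists since $H$ is Eulerian, hence even). For $f_2$, for each edge $e\in E(G)\setminus E(H)$ I would close $e$ into a circuit $C_e$ using a path in $H$ between its endpoints, and then let $f_2$ be the $\mathbb{Z}_2$-flow whose support equals the symmetric difference $C_{e_1}\bigtriangleup C_{e_2}\bigtriangleup\cdots\bigtriangleup C_{e_t}$ over all $e\in E(G)\setminus E(H)$. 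This symmetric difference is even and contains every edge of $E(G)\setminus E(H)$ exactly once, so $supp(f_1)\cup supp(f_2)=E(G)$ as required. In particular, this reconfirms that supereulerian graphs are $4$-NZF-admissible.

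Next, I apply Lemma \ref{lem-4-into-3} to $(G,\sigma)$ with the pair $(f_1,f_2)$ above. Since the spanning subgraph $H_1$ with edge set $supp(f_1)=E(H)$ equals $H$, and $H$ is connected, we have $|C(H_1)|=1$. By item (2) of the lemma, the resulting signed $3$-edge-colorable cubic graph $(G',\sigma')$ admits a $2$-factor $J$ with $|C(J)|=|C(H_1)|=1$. Thus $J$ is a Hamiltonian circuit of $G'$, so $(G',\sigma')$ is a flow-admissible signed Hamiltonian cubic graph. Assumption (3) then yields a $k$-NZF on $(G',\sigma')$, and item (3) of the lemma together with the positivity of the edges in $S=E(G')\setminus E(G)$ allows us to contract $S$ and restrict the flow to obtain a $k$-NZF on $(G'/S,\sigma')\cong(G,\sigma)$.

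The main obstacle in this plan is not the final contraction step, which is routine once Lemma \ref{lem-4-into-3} is in hand, but rather the freedom to choose the $2$-flows so that the component structure of $H_1$ matches the target cubic class. The symmetric-difference construction above resolves this, ensuring $supp(f_1)$ is exactly the prescribed spanning Eulerian subgraph and therefore that the induced $2$-factor $J$ is connected, which is precisely what promotes a generic $3$-edge-colorable cubic conclusion to a Hamiltonian cubic one.
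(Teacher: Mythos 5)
Your proposal is correct and follows essentially the same route as the paper: the trivial inclusions for $(1)\Rightarrow(2)\Rightarrow(3)$, and for $(3)\Rightarrow(1)$ the same construction of $f_2$ via the symmetric difference of the circuits $C_e$ closing each edge of $E(G)\setminus E(H)$ through $H$, followed by Lemma \ref{lem-4-into-3} with $|C(H_1)|=1$ forcing the $2$-factor $J$ to be a Hamiltonian circuit. The only cosmetic difference is your use of $\mathbb{Z}_2$-flows in place of integer $2$-flows, which is immaterial since only the supports (even subgraphs) matter.
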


    \begin{proof}
      It is trivial that {\rm(1)} implies {\rm(2)} and {\rm(2)} implies {\rm(3)}.
      Thus, we only need to prove that {\rm(3)} implies {\rm(1)}.

      Since $G$ is a supereulerian graph, it contains a spanning Eulerian subgraph $H_{1}$.
      For any edge $ e \in E(G) \setminus E(H_{1}) $, there exists a circuit in $ H_{1} \cup e $ that contains $ e $, we denote it by $ C_{e} $.
      Let $H_{2}$ $=\mathop{\bigtriangleup}_{e\in E(G)\setminus E(H_{1})} C_{e}$.
      Then $H_{2}$ is an even graph.
      Let $f_{1}$ be a $2$-flow with $supp(f_{1})=E(H_{1})$ and $f_{2}$ be a $2$-flow with $supp(f_{2})=E(H_{2})$.
      Therefore, $supp(f_{1})\cup supp(f_{2})=E(G)$.

      Let $(G,\sigma)$ be a flow-admissible signed supereulerian graph.
      By Lemma \ref{lem-4-into-3}, there exists a flow-admissible signed $3$-edge-colorable cubic graph $(G',\sigma')$ such that $(G'/S,\sigma')\cong(G,\sigma)$, where $S$ is a set of positive edges.
      And there exists a $2$-factor $J$ of $G'$ such that there is a bijection $f: C(H_{1}) \rightarrow C(J)$.
      Since $\left| C(H_{1})\right|=1$, we have $\left| C(J)\right| =1$.
      Thus, $J$ is a Hamiltonian circuit of $G'$.
      Therefore, $(G',\sigma')$ is a signed Hamiltonian cubic graph.
      Since every flow-admissible signed Hamiltonian cubic graph admits a $k$-NZF, $(G',\sigma')$ admits a $k$-NZF.
      Thus, $(G'/S,\sigma')$ admits a $k$-NZF, and so does $(G,\sigma)$.
    \end{proof}

  %  Li et al. \cite{LLLZZ23} study the nowhere-zero flows of flow-admissible signed Hamiltonian graphs as follows.

  %  \begin{thm}\cite{LLLZZ23}\label{h-8-flow}
  %    Every flow-admissible signed Hamiltonian graph admits $8$-NZF.
%    \end{thm}

%    Thus, a direct corollary of Theorems \ref{equi-s-h-hc} and \ref{h-8-flow} is as follows.
%
  %  \begin{thm}\label{SE-8-flow}
  %    Every flow-admissible signed supereulerian graph admits a nowhere-zero $8$-flow.
 %   \end{thm}

    For $\mathbb{Z}_{k}$-flow, we can prove that a class of flow-admissible signed supereulerian graphs admits a $\mathbb{Z}_{4}$-NZF.
    To conclude this section with an application of Lemma \ref{lem-4-into-3}, we prove that every signed supereulerian graph with a spanning even Eulerian subgraph admits a $\mathbb{Z}_{4}$-NZF.
    Before proceeding, it is necessary to define some terms and introduce relevant lemmas.
    A signed graph $(G, \sigma)$ is called \textit{antibalanced} if all even circuits in $ G $ are balanced and all odd circuits are unbalanced.
    M\'{a}\v{c}ajov\'{a} et al. \cite{MS15} provide the following characterization of signed cubic graphs that admit a $\mathbb{Z}_{4}$-NZF.

    \begin{thm}\cite{MS15}\label{H-Z4-f}
      A signed cubic graph admits a $\mathbb{Z}_{4}$-NZF if and only if it has an antibalanced $2$-factor.
    \end{thm}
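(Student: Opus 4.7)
\medskip

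The plan is to analyze the structure of a $\mathbb{Z}_{4}$-NZF at a cubic vertex and then propagate that structure along each circuit of a distinguished $2$-factor. The key numerical observation is that in $\mathbb{Z}_{4}$ we have $2 \equiv -2$ and $\{\pm 1\} = \{1,3\}$, so if $f$ is a $\mathbb{Z}_{4}$-NZF on $(G,\sigma)$, then for any vertex $v$ of degree $3$ with oriented contributions $x_{1}+x_{2}+x_{3}\equiv 0\pmod 4$, a short case analysis over $x_{i}\in\{\pm 1,\pm 2\}$ forces exactly one $x_{i}$ to satisfy $x_{i}\equiv 2\pmod 4$ (i.e.\ $f(e)=2$), while the other two satisfy $x_{i}\equiv \pm 1\pmod 4$ and must agree in sign. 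Thus $M:=\{e:f(e)=2\}$ is a perfect matching and $F:=E(G)\setminus M$ is a $2$-factor.

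For necessity, I would fix a circuit $C=v_{0}e_{1}v_{1}e_{2}\cdots e_{l}v_{0}$ of $F$ and, using the orientation constraint $\tau(h^{u}_{e})\tau(h^{v}_{e})=-\sigma(e)$ together with the ``two same-sign $\pm 1$'' rule above, define $z_{i}=\tau(h^{v_{i}}_{e_{i}})f(e_{i})\in\{\pm 1\}$ (taken at the end where $e_{i}$ arrives at $v_{i}$). The vertex condition at $v_{i}$ gives $\tau(h^{v_{i}}_{e_{i+1}})f(e_{i+1})=z_{i}$, and the edge condition on $e_{i+1}$ gives the propagation
\begin{equation*}
z_{i+1}=-\sigma(e_{i+1})\,z_{i}.
\end{equation*}
Iterating once around $C$ yields $z_{0}=(-1)^{l}\sigma(C)\,z_{0}$, so $\sigma(C)=(-1)^{l}$. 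Hence every even circuit of $F$ is balanced and every odd circuit is unbalanced, meaning $F$ is antibalanced.

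For sufficiency, given an antibalanced $2$-factor $F$ with complementary perfect matching $M$, I would build an explicit $\mathbb{Z}_{4}$-NZF. On each edge of $M$ pick an arbitrary orientation and set $f(e)=2$; its oriented contribution at either endpoint is then $\pm 2\equiv 2\pmod 4$. On each circuit $C$ of $F$, set $f(e)=1$ on every edge, choose $z_{0}=1$, and use the recurrence $z_{i+1}=-\sigma(e_{i+1})z_{i}$ to define the half-edge signs along $C$; the antibalanced condition $\sigma(C)=(-1)^{l}$ is exactly what is needed for these assignments to close up consistently, and compatibility with the bidirection equation $\tau(h^{v_{i-1}}_{e_{i}})\tau(h^{v_{i}}_{e_{i}})=-\sigma(e_{i})$ follows from squaring the recurrence. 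At each vertex the two $F$-contributions then sum to $2z_{i}\equiv 2\pmod 4$, and together with the $M$-contribution of $2$ the total is $4\equiv 0$, so the Kirchhoff law holds everywhere.

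The main obstacle is purely bookkeeping: carefully managing half-edge orientations so that the three-way identities (vertex condition at $v_{i}$, edge condition along $e_{i+1}$, and closure after one lap around $C$) line up, and verifying that a lap around $C$ contributes exactly the factor $(-1)^{l}\sigma(C)$. Once the propagation identity is in place, both directions of the equivalence reduce to the same closure condition, which is precisely the antibalanced property.
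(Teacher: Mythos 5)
The paper does not prove this statement; it is quoted from \cite{MS15} and used as a black box, so there is no internal proof to compare against. Your argument is nevertheless a correct, self-contained proof along the standard lines of the cited source: the parity analysis at a cubic vertex correctly forces exactly one incident edge with value $2$ (so $M=\{e: f(e)=2\}$ is a perfect matching and its complement a $2$-factor) and the two remaining contributions to agree, and the propagation identity $z_{i+1}=-\sigma(e_{i+1})z_i$ with closure factor $(-1)^{l}\sigma(C)$ yields antibalance in one direction and serves as the consistency condition for the explicit construction in the other. No gaps.
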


    Recall that an even Eulerian graph is a signed Eulerian graph with an even number of negative edges.
    For $\mathbb{Z}_{4}$-NZF, we have the following corollary.

    \begin{coro}\label{BHC-Z4-flow}
    Every signed supereulerian graph with a spanning even Eulerian subgraph  admits a $\mathbb{Z}_{4}$-NZF.
    \end{coro}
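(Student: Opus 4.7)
The plan is to combine Lemma \ref{lem-4-into-3} with the cubic-graph characterization of $\mathbb{Z}_4$-NZFs in Theorem \ref{H-Z4-f}. Let $(G,\sigma)$ be a signed supereulerian graph with spanning even Eulerian subgraph $H_1$. Since $H_1$ is Eulerian it is connected with all vertices of even degree, and since it has an even number of negative edges we have $\sigma(H_1)=+1$. As in the proof of Theorem \ref{equi-s-h-hc}, set $H_2=\bigtriangleup_{e\in E(G)\setminus E(H_1)}C_e$, where $C_e$ is a circuit in $H_1\cup e$ containing $e$; then $H_2$ is even and, letting $f_i$ be a $2$-flow with $\mathrm{supp}(f_i)=E(H_i)$, one has $\mathrm{supp}(f_1)\cup\mathrm{supp}(f_2)=E(G)$. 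In particular $G$ is $4$-NZF-admissible, so Lemma \ref{lem-4-into-3} applies.

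Next, I would apply Lemma \ref{lem-4-into-3} to $(G,\sigma)$ with this choice of $f_1,f_2$, yielding a signed $3$-edge-colorable cubic graph $(G',\sigma')$, a $2$-factor $J$ of $G'$, a sign-preserving bijection $f\colon C(H_1)\to C(J)$, and a set $S\subseteq E(G')$ of positive edges with $(G'/S,\sigma'\!\mid_{E(G'/S)})\cong(G,\sigma)$. Since $H_1$ is connected, $|C(H_1)|=1$, hence $|C(J)|=1$ and $J$ is a single Hamilton circuit of $G'$. The construction in the proof of Lemma \ref{lem-4-into-3} shows that every component of $J$ is an even circuit, so $J$ itself is even; and by sign preservation $\sigma'(J)=\sigma(H_1)=+1$, so $J$ is balanced. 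Therefore $J$ is a $2$-factor consisting of a single even balanced circuit, which is vacuously antibalanced.

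Finally, Theorem \ref{H-Z4-f} yields a $\mathbb{Z}_4$-NZF on the signed cubic graph $(G',\sigma')$. Since every edge of $S$ is positive, restricting this flow to $E(G'/S)=E(G)$ produces a $\mathbb{Z}_4$-NZF of $(G'/S,\sigma')$: contracting a positive edge merges two vertices whose individual boundaries were both zero, so the combined boundary at the new vertex is also zero, and all other edge values remain nonzero. Transporting this $\mathbb{Z}_4$-NZF across the isomorphism $(G'/S,\sigma')\cong(G,\sigma)$ gives the desired flow on $(G,\sigma)$. The only subtle step is verifying that the Hamilton circuit $J$ produced by the reduction is both even (which is already established inside the proof of Lemma \ref{lem-4-into-3}) and balanced (which is exactly the point of the ``even'' hypothesis on $H_1$, coupled with the sign-preservation clause of Lemma \ref{lem-4-into-3}(2)); once these are in hand the conclusion follows immediately from Theorem \ref{H-Z4-f}.
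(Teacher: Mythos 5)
Your proposal is correct and follows essentially the same route as the paper: reuse the two $2$-flows from the proof of Theorem \ref{equi-s-h-hc}, apply Lemma \ref{lem-4-into-3} to obtain a signed $3$-edge-colorable cubic graph whose $2$-factor is a single balanced even Hamiltonian circuit (hence antibalanced), invoke Theorem \ref{H-Z4-f}, and pull the $\mathbb{Z}_4$-NZF back through the contraction of the positive edge set $S$. The only difference is that you spell out the contraction/restriction step in more detail than the paper does.
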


    \begin{proof}
      Let $(G,\sigma)$ be a signed supereulerian graph with a spanning even Eulerian subgraph $(H,\sigma)$.
      As mentioned in the proof of Theorem \ref{equi-s-h-hc}, there are two $2$-flows, $f_{1}$ and $f_{2}$, on $G$ such that $supp(f_{1}) \cup supp(f_{2}) = E(G)$ and $supp(f_{1}) = E(H)$.
      According to Lemma \ref{lem-4-into-3}, there exists a signed $3$-edge-colorable cubic graph $(G',\sigma')$ with a balanced Hamiltonian circuit such that $(G'/S,\sigma')\cong(G,\sigma)$, where $S$ is a set of positive edges.
      Note that an even circuit with positive sign is both balanced and antibalanced.
      Consequently, if a Hamiltonian circuit in a cubic graph is antibalanced, it is also balanced due to its even length.
      Thus, according to Theorem \ref{H-Z4-f}, every signed Hamiltonian cubic graph with a balanced Hamiltonian circuit admits a $\mathbb{Z}_{4}$-NZF.
      Therefore, $(G',\sigma')$ admits a $\mathbb{Z}_{4}$-NZF, and so does $(G,\sigma)$.
    \end{proof}

    \section{Nowhere-zero $6$-flows on signed supereulerian graphs with a spanning even Eulerian subgraph}

    In this section, we discuss the existence of a $6$-NZF on a signed supereulerian graph with a spanning even Eulerian subgraph, as follows.

    \begin{thm}\label{see-6-flow}
      Let $(G,\sigma)$ be flow-admissible.
      If $(G,\sigma)$ has a spanning even Eulerian subgraph, then $(G,\sigma)$ admits a nowhere-zero $6$-flow.
    \end{thm}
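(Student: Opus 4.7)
The plan is to reduce the statement to the cubic case via Lemma~\ref{lem-4-into-3} and then to prove the $6$-NZF assertion for signed cubic graphs with a balanced Hamiltonian circuit. Let $(H,\sigma)$ be a spanning even Eulerian subgraph of $(G,\sigma)$. Since $H$ is Eulerian, $G$ is supereulerian and hence $4$-NZF-admissible; mirroring the argument in the proof of Theorem~\ref{equi-s-h-hc}, I would choose two $2$-flows $f_1,f_2$ on $G$ with $supp(f_1)=E(H)$ and $supp(f_1)\cup supp(f_2)=E(G)$.

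Applying Lemma~\ref{lem-4-into-3} with this pair produces a flow-admissible signed $3$-edge-colorable cubic graph $(G',\sigma')$, a $2$-factor $J$ of $G'$, and a sign-preserving bijection from the components of the spanning subgraph with edge set $supp(f_1)=E(H)$ to the components of $J$. Because $H$ is connected, so is $J$; hence $J$ is a Hamiltonian circuit of $G'$. Because $H$ is even Eulerian, $\sigma(H)=+1$, and sign preservation gives $\sigma'(J)=+1$, so $J$ is balanced. By Lemma~\ref{lem-4-into-3}(3), $(G'/S,\sigma')\cong(G,\sigma)$ for the set $S=E(G')\setminus E(G)$ of added positive edges, so every $6$-NZF on $(G',\sigma')$ restricts to a $6$-NZF on $(G,\sigma)$. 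The theorem therefore reduces to the assertion that every flow-admissible signed cubic graph containing a balanced Hamiltonian circuit admits a $6$-NZF, i.e., the cubic instance of Theorem~\ref{BH-6-flow}.

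For the cubic subproblem, after switching I may assume that every edge of $J$ is positive, so the complementary perfect matching $M=E(G')\setminus E(J)$ carries all the negative edges. The idea is to construct the $6$-NZF in the form $3g_1+g_2$, where $g_1$ is the $2$-flow circulating around $J$ (which is well defined because $J$ has even length) and $g_2$ is a $3$-flow whose support contains $M$, with a correction along $J$ so that Kirchhoff's law holds at every vertex. Each chord of $M$ contributes a value in $\{\pm 1\}$ at its two ends; traversing $J$ yields a telescoping condition on the $g_2$-values along $J$, and this condition closes up precisely because $\sigma'(J)=+1$. Flow-admissibility excludes the small configurations in which no such assignment can avoid a zero on a chord.

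The main obstacle is the cubic subproblem, where the admissible assignments for $g_2$ depend delicately on the cyclic arrangement of positive and negative chords along $J$. Verifying, segment by segment along $J$, that the partial sums forced by Kirchhoff remain compatible with the constraint that $g_2$ take values in $\{0,\pm 1,\pm 2\}$ and be nonzero on $M$ requires a case analysis tied to the chord pattern. The balanced Hamiltonian structure ensures closure of the flow, while flow-admissibility---ruling out obstructions analogous to the $(G_n,\sigma_n)$ of Figure~\ref{BH has no 5-flow} that already fail $5$-NZF---provides exactly the slack needed to reach $6$ and no lower. Pulling the resulting $6$-NZF back through the isomorphism $(G'/S,\sigma')\cong(G,\sigma)$ of Lemma~\ref{lem-4-into-3}(3) completes the proof.
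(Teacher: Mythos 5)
Your reduction step is exactly the paper's: Lemma~\ref{equi} (built on Lemma~\ref{lem-4-into-3} with $supp(f_1)=E(H)$) reduces Theorem~\ref{see-6-flow} to the assertion that every flow-admissible signed cubic graph with a balanced Hamiltonian circuit admits a $6$-NZF, i.e.\ to Theorem~\ref{BH-6-flow}. That part is fine. The gap is in the cubic subproblem, which is where all the content of the theorem lives. Your ansatz $3g_1+g_2$, with $g_1$ the $2$-flow around the all-positive Hamiltonian circuit $J$ and $g_2$ a $3$-flow whose support contains the chord matching $M$, is precisely the paper's Lemma~\ref{even-6-flow}, and it works only when $\left|E_N(G,\sigma)\right|$ is even. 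The reason is the closure condition you attribute to $\sigma'(J)=+1$: after switching, $J$ is all-positive, so summing the Kirchhoff relations around $J$ makes every positive chord cancel and leaves $\sum_{e\ \mathrm{negative}}\pm 2g_2(e)=0$. This condition has nothing to do with the balance of $J$; it is a constraint on the negative chords. When $\left|E_N\right|$ is odd, the $\mathbb{Z}_2$-lifting tool (Lemma~\ref{Z2-3-flow}) is unavailable because any $\mathbb{Z}_2$-flow whose support contains $M$ then has an odd number of negative edges, and the existence of an integer $3$-flow $g_2$ with $M\subseteq supp(g_2)$ is exactly the unresolved hard case --- your ``case analysis tied to the chord pattern'' is the entire difficulty, not a verification.

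The paper does not push this ansatz through in the odd case; it changes the decomposition. When two negative chords intersect along $H$ it uses $2f_1\pm f_2$ with $f_1$ a $3$-flow on $G\setminus e_1$ (even parity restored by deleting one negative edge) and $f_2$ a $3$-flow on $H\cup\{e_1,e_2\}$ taking value $2$ on the two negative edges (Lemma~\ref{odd-intersect-6-flow}). In the remaining case, where all negative chords are pairwise parallel along $H$, it uses $f_1\pm 2f_2$ with $f_1$ a $4$-NZF on $H\cup\{e^1,e^2\}$ taking values $1$ and $3$ on the two arcs of $H$, and $f_2$ a $3$-flow on a subgraph $G_2$ obtained by deleting an alternating edge set along a chosen path $P$ of $H$; this is a genuinely different and much more delicate construction than anything in your sketch. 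Finally, your appeal to flow-admissibility ``ruling out obstructions analogous to $(G_n,\sigma_n)$'' is backwards: $(G_n,\sigma_n)$ is flow-admissible and has a balanced Hamiltonian circuit, so it is a member of the class being proved, not an excluded configuration; it shows the bound $6$ is tight, and any correct argument must produce a $6$-NZF on it.
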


    The following lemma shows that Theorem \ref{see-6-flow} can be reduced to the problem of deciding whether a flow-admissible signed Hamiltonian graph with a balanced Hamiltonian circuit admits a $6$-NZF.

    \begin{lem}\label{equi}
      Let $k$ be a positive integer. Then the following statements are equivalent:

      {\rm(1)} Every flow-admissible signed supereulerian graph with a spanning even Eulerian subgraph admits a $k$-NZF;

      {\rm(2)} Every flow-admissible signed Hamiltonian graph with a balanced Hamiltonian circuit admits a $k$-NZF;

      {\rm(3)} Every flow-admissible signed Hamiltonian cubic graph with a balanced Hamiltonian circuit admits a $k$-NZF.
    \end{lem}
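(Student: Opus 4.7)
The plan is to establish the cycle $(1)\Rightarrow(2)\Rightarrow(3)\Rightarrow(1)$, where the first two implications are essentially tautological and the last is the substantive step, handled by applying Lemma \ref{lem-4-into-3} together with the construction used in the proof of Theorem \ref{equi-s-h-hc}. The key observation that I would emphasize at the outset is the translation between the terms "balanced" and "even": a circuit $C$ is balanced precisely when $\sigma(C)=+1$, which happens if and only if $|E_N(C,\sigma)|$ is even, so a balanced Hamiltonian circuit is a spanning even Eulerian subgraph. This immediately yields $(1)\Rightarrow(2)$, while $(2)\Rightarrow(3)$ is trivial because every Hamiltonian cubic graph is Hamiltonian.

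For $(3)\Rightarrow(1)$, I would start with a flow-admissible signed supereulerian graph $(G,\sigma)$ equipped with a spanning even Eulerian subgraph $H_1$. Repeating the construction from the proof of Theorem \ref{equi-s-h-hc}, I would produce a second even spanning subgraph $H_2$ via symmetric differences of circuits so that the $2$-flows $f_1,f_2$ with $\operatorname{supp}(f_i)=E(H_i)$ satisfy $\operatorname{supp}(f_1)\cup\operatorname{supp}(f_2)=E(G)$; in particular $G$ is $4$-NZF-admissible. I would then invoke Lemma \ref{lem-4-into-3} with this choice of $f_1,f_2$ to obtain a flow-admissible signed $3$-edge-colorable cubic graph $(G',\sigma')$ together with a $2$-factor $J$ of $G'$ and a sign-preserving bijection $f\colon C(H_1)\to C(J)$, and a set $S$ of positive edges with $(G'/S,\sigma'|_{E(G'/S)})\cong(G,\sigma)$.

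The heart of the argument is to verify that $(G',\sigma')$ is Hamiltonian and carries a balanced Hamiltonian circuit. Since $H_1$ is Eulerian, $|C(H_1)|=1$, so the bijection forces $|C(J)|=1$ and $J$ is a spanning circuit of $G'$. Moreover, $H_1$ is \emph{even} Eulerian, meaning $|E_N(H_1,\sigma)|$ is even, i.e., $\sigma(H_1)=+1$; the sign-preserving property of $f$ then gives $\sigma'(J)=\sigma(H_1)=+1$, so $J$ is balanced. Thus $(G',\sigma')$ is a flow-admissible signed Hamiltonian cubic graph with a balanced Hamiltonian circuit. Assumption (3) provides a $k$-NZF on $(G',\sigma')$, and since contracting a set of positive edges preserves the existence of a $k$-NZF (as noted in the paragraph preceding Theorem \ref{equi-relation}), the isomorphism $(G'/S,\sigma')\cong(G,\sigma)$ transports this $k$-NZF to $(G,\sigma)$.

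The only step that requires real care is the sign bookkeeping in the middle paragraph; everything else is a direct citation of Lemma \ref{lem-4-into-3} and the standard supereulerian-to-$4$-NZF-admissible construction. The main obstacle is conceptual rather than computational: one must recognize that "spanning even Eulerian subgraph" is precisely the hypothesis needed to ensure the resulting Hamiltonian circuit is balanced, and that this balance is exactly what the sign-preserving clause (2) of Lemma \ref{lem-4-into-3} delivers.
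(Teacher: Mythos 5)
Your proposal is correct and follows essentially the same route as the paper: the forward implications via the observation that a balanced Hamiltonian circuit is a spanning even Eulerian subgraph, and $(3)\Rightarrow(1)$ by building $f_1,f_2$ as in the proof of Theorem \ref{equi-s-h-hc}, invoking Lemma \ref{lem-4-into-3}, and using the sign-preserving bijection to conclude that the resulting $2$-factor is a balanced Hamiltonian circuit. Your middle paragraph merely makes explicit the sign bookkeeping that the paper leaves implicit in its citation of Lemma \ref{lem-4-into-3}.
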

    \begin{proof}
      We only need to show that (3) implies (1).
      Let $(G,\sigma)$ be a flow-admissible signed supereulerian graph with a spanning even Eulerian subgraph $(H,\sigma)$.
      As mentioned in the proof of Theorem \ref{equi-s-h-hc}, there are two $2$-flows, $f_{1}$ and $f_{2}$, on $G$ such that $supp(f_{1}) \cup supp(f_{2}) = E(G)$ and $supp(f_{1}) = E(H)$.
      By Lemma \ref{lem-4-into-3}, there exists a flow-admissible signed Hamiltonian cubic graph $(G',\sigma')$ with a balanced Hamiltonian circuit such that $(G'/S,\sigma')\cong(G,\sigma)$, where $S$ is a set of positive edges.
      By Statement (3), $(G',\sigma')$ admits a $k$-NZF, so does $(G'/S,\sigma')$.
      Therefore, $(G,\sigma)$ admits a $k$-NZF.
    \end{proof}

    By Lemma \ref{equi}, in order to prove Theorem \ref{see-6-flow}, it therefore suffices to prove Theorem \ref{BH-6-flow}.

    \begin{thm}\label{BH-6-flow}
      Let $(G,\sigma)$ be flow-admissible.
      If $(G,\sigma)$ has a balanced Hamiltonian circuit, then $(G,\sigma)$ admits a nowhere-zero $6$-flow.
    \end{thm}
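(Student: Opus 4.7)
The plan is to invoke Lemma~\ref{equi} to reduce the theorem to the cubic case, and then to build a $\mathbb{Z}_6$-nowhere-zero flow by combining a $\mathbb{Z}_2$-flow supported on the balanced Hamilton circuit with a $\mathbb{Z}_3$-flow nonzero on the complementary matching, via the Chinese Remainder Theorem decomposition $\mathbb{Z}_6 \cong \mathbb{Z}_2 \times \mathbb{Z}_3$. An integer $6$-NZF is then recovered from the $\mathbb{Z}_6$-NZF by the standard Tutte-style lifting, which extends to signed graphs.

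So let $(G,\sigma)$ be a flow-admissible signed cubic Hamiltonian graph with balanced Hamilton circuit $C = v_0 v_1 \cdots v_{n-1} v_0$, and let $M = E(G) \setminus E(C)$ be the perfect matching of chords. Switching lets me assume every edge of $C$ is positive, so all negative edges lie in $M$; let $k$ denote the number of negative chords. If $k = 0$, then $(G,\sigma)$ is switching-equivalent to the underlying ordinary cubic Hamiltonian graph, which is $3$-edge-colorable and hence admits a $4$-NZF, whence a $6$-NZF. If $k = 1$, then the unique negative chord $e$ cannot lie in any balanced circuit (every circuit through $e$ has sign $-1$) nor in any barbell (which requires two edge-disjoint unbalanced circuits, each consuming at least one negative edge), contradicting Proposition~\ref{equi-flow-admissible}. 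Thus I may assume $k \geq 2$.

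Define a $\mathbb{Z}_2$-flow $g_1$ with $\mathrm{supp}(g_1)=E(C)$ by cyclically orienting $C$ and setting $g_1 \equiv 1$ on $E(C)$. Orient each chord $e = v_i v_j$ (with $i < j$) so that $\tau(h^{v_i}_e) = +1$ and $\tau(h^{v_j}_e) = -\sigma(e)$; with this convention, both half-edges of a negative chord point outward. Now choose $g_2(e) \in \{1, 2\} \subset \mathbb{Z}_3$ for every chord $e$ and propagate $g_2$ along $C$ by integrating the vertex conservation law. A telescoping computation shows that closure of $g_2$ around $C$ is equivalent to
\[
\sum_{e \in M_-} g_2(e) \equiv 0 \pmod 3,
\]
where $M_-$ is the set of negative chords: the two endpoint orientation signs of a positive chord cancel, while those of a negative chord sum to $+2$. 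Since $|M_-| = k \geq 2$, the attainable sums as the $g_2(e)$ range over $\{1,2\}^{M_-}$ form the integer interval $[k, 2k]$ of length $k+1 \geq 3$, which always contains a multiple of $3$; fix such a choice.

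Combining $g_1$ and $g_2$ via the CRT isomorphism produces a $\mathbb{Z}_6$-NZF of $(G,\sigma)$: on each $C$-edge the $\mathbb{Z}_2$-component equals $1$, and on each chord the $\mathbb{Z}_3$-component equals $g_2(e) \neq 0$, so neither component simultaneously vanishes. Lifting this $\mathbb{Z}_6$-NZF to an integer $6$-NZF completes the proof. I expect the main obstacle to be the orientation bookkeeping on chords, specifically verifying that a \emph{negative} chord contributes $+2$ (rather than $0$) to the telescoping sum; this asymmetry between positive and negative chords is precisely what forces the mod-$3$ condition on $M_-$, and it explains both why the $k=1$ case must be excluded via flow-admissibility and why the balanced Hamilton circuit hypothesis is essential to the construction.
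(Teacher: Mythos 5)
Your reduction to the cubic case via Lemma~\ref{equi}, the switching normalization, the disposal of the cases $k=0$ and $k=1$, and the construction of the $\mathbb{Z}_6$-NZF are all sound: the telescoping closure condition $\sum_{e\in M_-}g_2(e)\equiv 0\pmod 3$ is computed correctly (each positive chord contributes $0$ and each negative chord contributes $2g_2(e)$, and $2$ is invertible mod $3$), and $[k,2k]$ does contain a multiple of $3$ once $k\ge 2$. The fatal gap is the final sentence. For signed graphs there is \emph{no} ``standard Tutte-style lifting'' from a $\mathbb{Z}_k$-NZF to an integer $k$-NZF: Tutte's equivalence rests on reversing edges and repairing boundaries along paths, and negative edges break that argument irreparably. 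An unbalanced circuit already admits a $\mathbb{Z}_2$-NZF yet is not even flow-admissible; and the graphs $(G_n,\sigma_n)$ of Fig.~\ref{BH has no 5-flow} have a balanced Hamiltonian circuit, hence admit a $\mathbb{Z}_4$-NZF by Corollary~\ref{BHC-Z4-flow} (and a $\mathbb{Z}_6$-NZF by your own construction), yet admit no $5$-NZF. So modular nowhere-zero flows cannot be converted to integer ones of the same order for free, and everything you have built up to the $\mathbb{Z}_6$-NZF is essentially the (easy) content of Corollary~\ref{BHC-Z4-flow}. The entire difficulty of Theorem~\ref{BH-6-flow} lives in the step you have asserted without proof.

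The paper closes this gap by working with integer flows throughout. The only modular-to-integer conversion it allows itself is Lemma~\ref{Z2-3-flow}, which turns a $\mathbb{Z}_2$-flow whose support carries evenly many negative edges into an integer $3$-flow (note the loss from $2$ to $3$, and the parity hypothesis). This settles the case $|E_N(G,\sigma)|$ even (Lemma~\ref{even-6-flow}); for $|E_N(G,\sigma)|$ odd it must split further into the case where two negative chords intersect along the Hamiltonian circuit (Lemma~\ref{odd-intersect-6-flow}) and the case where all negative chords are pairwise parallel, in each building explicit integer flows $f_1$, $f_2$ and checking edge class by edge class that $f_1\pm 2f_2$ is nonzero with absolute value at most $5$. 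To salvage your approach you would have to prove a lifting statement for your particular $\mathbb{Z}_2\times\mathbb{Z}_3$ flow, and doing so in effect reproduces this case analysis; in particular your construction never uses the hypothesis that would be needed to rule out the parallel-chords obstruction, which is where the real work lies.
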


    Before we proceed, we require the following lemma.

    \begin{lem}\cite{CLLZ18}\label{Z2-3-flow}
      If a signed graph $(G,\sigma)$ is connected and admits a $\mathbb{Z}_{2}$-flow $f_{1}$ such that $supp(f_{1})$ has an even number of negative edges, then it also admits a $3$-flow $f_{2}$ with $supp(f_1) = \{e \in E(G): f_{2}(e) = \pm1\}$.
    \end{lem}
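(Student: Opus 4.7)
The plan is to construct $f_2$ by patching together locally defined flows. Since $f_1$ is a $\mathbb{Z}_2$-flow, $H := supp(f_1)$ is an even signed subgraph of $(G,\sigma)$. Decomposing $E(H)$ into edge-disjoint circuits $C_1,\dots,C_m$ and using $\prod_{i}\sigma(C_i)=(-1)^{|E_N(H)|}=+1$, the number of unbalanced $C_i$'s is even. Equivalently, the number of connected components $I$ of $H$ with odd $|E_N(I)|$ is even.

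For each component $I$ of $H$ with $|E_N(I)|$ even, I would exhibit a sign-respecting Eulerian orientation of $I$ (obtained, for instance, by switching to a representative with the fewest negative edges and then orienting along an Eulerian tour of the resulting even subgraph); under this orientation the function constantly equal to $1$ on $E(I)$ is a flow, so I set $f_2\equiv 1$ on $E(I)$. For the (evenly many) components $I$ of $H$ with $|E_N(I)|$ odd, I would pair them up and, using the connectedness of $(G,\sigma)$, choose for each pair $(I_a,I_b)$ a path $P_{ab}$ in $G$ from a vertex $u_a\in V(I_a)$ to a vertex $u_b\in V(I_b)$ (the parity of $|E_N(P_{ab})|$ is at our disposal because $G$ must then be unbalanced, since a balanced $G$ forces all components of $H$ to have even $|E_N|$). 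The signed subgraph $I_a\cup P_{ab}\cup I_b$ then carries an integer flow that is $\pm 1$ on $E(I_a)\cup E(I_b)$ and $\pm 2$ on $E(P_{ab})$: place a ``nearly Eulerian'' $\pm 1$ assignment on each of $I_a,I_b$ whose single $\pm 2$ parity defect is concentrated at $u_a,u_b$, and route a $\pm 2$ flow along $P_{ab}$ to absorb those defects.

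Summing these local flows yields a candidate $f_2$ that takes value $\pm 1$ on every edge of $H$ and whose boundary is $0$ at every vertex. The main obstacle will be controlling the value of $f_2$ on edges of $E(G)\setminus H$ that lie in several of the paths $P_{ab}$, since a naive sum could exceed $2$ in absolute value. I would resolve this by routing all the $P_{ab}$ coherently through a fixed spanning tree $T$ of $G$ and orienting the $\pm 2$ contributions so that each non-tree edge lies in at most one path, while on tree edges the overlapping contributions cancel pairwise --- a $T$-join argument that goes through because the path-endpoints pair up by parity and the pairing of unbalanced components was arbitrary. Once this overlap problem is handled, the resulting $f_2$ satisfies $|f_2(e)|\leq 2$ everywhere, has zero boundary, and has $supp(f_1)=\{e\in E(G):f_2(e)=\pm 1\}$, completing the proof.
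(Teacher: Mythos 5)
The paper does not prove this lemma at all --- it is quoted verbatim from \cite{CLLZ18} --- so there is no internal proof to compare against; I can only judge your argument on its own. Your opening moves are sound: each component $I$ of $H=supp(f_1)$ is a connected signed Eulerian graph, the number of components with $|E_N(I)|$ odd is even, an ``even'' component carries a nowhere-zero $\pm 1$ flow (this is exactly part (2) of the M\'a\v{c}ajov\'a--\v{S}koviera theorem quoted later as Theorem 5.7), and a single pair of ``odd'' components joined by one path disjoint from everything else carries the $\pm1/\pm2$ flow you describe. This is the natural strategy, and up to that point the argument is correct.

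The gap is that the final step --- the one you yourself flag as ``the main obstacle'' --- is not actually carried out, and the specific mechanism you propose does not work as stated. First, the connecting paths need not avoid $supp(f_1)$: a spanning tree of $G$ will in general contain edges of $H$, and whenever several odd components can only be joined through another component of $H$ (a chain of unbalanced circuits linked by bridges already forces this for some pairings), a $\pm 2$ path contribution lands on an edge already carrying $\pm 1$, producing $\pm 3$. Second, ``cancel pairwise'' is impossible when an odd number ($\geq 3$) of paths share a tree edge, which happens for parity reasons whenever a cut edge separates the odd components into two odd-sized groups; the best one can then achieve is a residual $\pm 2$, which is tolerable off $H$ but fatal on $H$, and with several overlapping paths one must also rule out sums of $\pm 4,\pm 6,\dots$. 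On a tree the correction values are in fact \emph{forced} by the choice of defect signs (the value on a tree edge is the signed count of defects on one side), so keeping them in $\{0,\pm 1\}$ is a genuine discrepancy statement for the laminar family of subtree cuts, and one must additionally show that the resulting per-vertex boundary defects can be absorbed by the $\pm 1$ assignment inside each Eulerian component. None of this is supplied, and it is precisely where all the work of the lemma lies; as written, the proposal is a plausible plan rather than a proof.
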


    By Lemma \ref{Z2-3-flow}, we have the following lemma that shows the existence of a $3$-flow in a signed graph with an all-positive Hamiltonian circuit.

    \begin{lem}\label{even-3-flow}
       Let $(G,\sigma)$ be a signed graph with an all-positive Hamiltonian circuit $(H,\sigma)$.
       If $\left| E_{N}(G,\sigma) \right|$ is even, then $(G,\sigma)$ admits a $3$-flow $f$ such that $E(G)\setminus E(H)\subseteq \{e \in E(G): f(e) = \pm1\}$.
    \end{lem}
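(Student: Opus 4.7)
The plan is to reduce to Lemma~\ref{Z2-3-flow}: if we can exhibit a $\mathbb{Z}_2$-flow $f_1$ on $(G,\sigma)$ whose support contains every chord of $H$ (i.e., $E(G)\setminus E(H)\subseteq supp(f_1)$) and contains an even number of negative edges, then Lemma~\ref{Z2-3-flow} immediately produces a $3$-flow $f_2$ with $\{e\in E(G):f_2(e)=\pm1\}=supp(f_1)\supseteq E(G)\setminus E(H)$, which is exactly what is required. Note that $H$ being Hamiltonian guarantees that $G$ is connected, so the hypothesis of Lemma~\ref{Z2-3-flow} on connectivity is automatically met.

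To construct $f_1$, I would proceed by a symmetric-difference-of-cycles argument using the Hamiltonian circuit $H$. For each chord $e=uv\in E(G)\setminus E(H)$, the two arcs of $H$ between $u$ and $v$ each combine with $e$ to form a cycle; pick one such arc $P_e$, and let $C_e=e\cup P_e$. Define
\[
D \;=\; \mathop{\bigtriangleup}_{e\in E(G)\setminus E(H)} C_e.
\]
Since $D$ is a symmetric difference of cycles, it is an even subgraph of $G$. Moreover, each chord $e$ appears in $C_e$ and in no other $C_{e'}$ (the other cycles use only chords that are themselves the distinguished chord of their own cycle and edges of $H$), so every chord survives in $D$. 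Setting $f_1(e)=1$ for $e\in D$ and $f_1(e)=0$ otherwise therefore gives a $\mathbb{Z}_2$-flow with $supp(f_1)=D\supseteq E(G)\setminus E(H)$.

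For the parity condition, the hypothesis that $(H,\sigma)$ is all-positive forces every negative edge of $(G,\sigma)$ to be a chord, hence to lie in $D$. Consequently
\[
|E_N(G,\sigma)\cap supp(f_1)|=|E_N(G,\sigma)|,
\]
which is even by assumption. Applying Lemma~\ref{Z2-3-flow} to $f_1$ then yields the desired $3$-flow $f$.

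There is no real obstacle here beyond checking the combinatorial bookkeeping: the one point that demands attention is verifying that each chord appears in exactly one of the composing cycles $C_e$, which ensures the chords genuinely survive in the symmetric difference $D$ rather than canceling each other out. Everything else is a direct unpacking of the two hypotheses (all-positive $H$ and even $|E_N|$) combined with the existing Lemma~\ref{Z2-3-flow}.
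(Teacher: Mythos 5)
Your proposal is correct and follows essentially the same route as the paper: both reduce to Lemma~\ref{Z2-3-flow} by forming the symmetric difference of the fundamental cycles of the chords with respect to $H$, observing that this even subgraph contains every chord (hence every negative edge, since $H$ is all-positive) and therefore supports a $\mathbb{Z}_2$-flow with an even number of negative edges. The only cosmetic difference is that the paper fixes a single Hamiltonian path $P=H\setminus e$ and uses the unique cycle in $P\cup\{e_1\}$ for each chord $e_1$, whereas you allow an arbitrary choice of arc of $H$ for each chord; both choices make each chord appear in exactly one composing cycle, so the argument goes through identically.
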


    \begin{proof}
      For an edge $e \in H$, there exists a Hamiltonian path $H \setminus e$ of $G$, denoted by $P$.
      Then, for any edge $e_{1} \in E(G)\setminus E(H)$, we have $P\cup \{e_{1}\}$ forms a unique circuit, denoted by $C_{e_{1}}$.

      The symmetric difference $\mathop{\bigtriangleup }_{e\in E(G)\setminus E(H)} C_{e}$, denoted by $H'$, contains all edges of $E(G)\setminus E(H)$.
      Therefore, $(H',\sigma)$ is an even graph with an even number of negative edges.
      It is evident that every signed even graph admits a $\mathbb{Z}_{2}$-NZF.
      Consequently, $(H',\sigma)$ also admits a $\mathbb{Z}_{2}$-NZF $f'$.

      Since $G$ is connected, by Lemma \ref{Z2-3-flow}, $(G,\sigma)$ admits a $3$-flow $f$ such that $E(G)\setminus E(H)\subseteq\{e \in E(G): f(e) = \pm1\}$.
    \end{proof}

    The following theorem shows that if a signed graph with an all-positive Hamiltonian circuit has an even number of negative edges, then it admits a $6$-NZF.

    \begin{lem}\label{even-6-flow}
      Let $(G,\sigma)$ be a signed graph with an all-positive Hamiltonian circuit $(H,\sigma)$. If $\left| E_{N}(G,\sigma) \right|$ is even, then $(G,\sigma)$ admits a $6$-NZF.
    \end{lem}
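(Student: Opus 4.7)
The plan is to combine a $3$-flow provided by Lemma~\ref{even-3-flow} with a $2$-flow supported on the Hamiltonian circuit, using the standard ``$3$-flow plus $3\cdot(2\text{-flow})$'' recipe to produce a $6$-NZF directly (without passing through $\mathbb{Z}_6$). The parity hypothesis $|E_N(G,\sigma)|$ even is needed only to feed into Lemma~\ref{even-3-flow}; once that $3$-flow is in hand, the construction is purely local.

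First, I would invoke Lemma~\ref{even-3-flow} to obtain a $3$-flow $(\tau, f_1)$ on $(G,\sigma)$ with the property that $f_1(e) = \pm 1$ for every $e \in E(G)\setminus E(H)$, so in particular $|f_1(e)|=1$ off the Hamiltonian circuit. Next, I would construct, with respect to the \emph{same} orientation $\tau$, a $2$-flow $(\tau, f_2)$ on $(G,\sigma)$ whose support is exactly $E(H)$. Because $(H,\sigma)$ is an all-positive circuit, it is switching equivalent to the ordinary circuit $H$, which certainly admits a nowhere-zero $2$-flow; concretely, fixing one edge of $H$ with value $+1$ and propagating around the circuit forces a consistent assignment in $\{\pm 1\}$ that satisfies the Kirchhoff condition at every vertex of $H$ (the all-positive sign-product condition on each edge together with the balanced parity of the cycle makes the propagation close up). Extend $f_2$ by $0$ off $E(H)$.

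Now I would set $f := f_1 + 3f_2$ under the common orientation $\tau$; this is a flow on $(G,\sigma)$ as a $\mathbb{Z}$-linear combination of flows. For $e \in E(H)$ we have $f_2(e) = \pm 1$ and $|f_1(e)|\le 2$, so $f(e) = f_1(e) \pm 3$ lies in $\{\pm 1,\pm 2,\pm 3,\pm 4,\pm 5\}$; in particular $f(e)\neq 0$ and $|f(e)|\le 5$. For $e \in E(G)\setminus E(H)$ we have $f_2(e)=0$ and $f_1(e)=\pm 1$, hence $f(e)=\pm 1$. Therefore $f$ is a nowhere-zero $6$-flow, completing the proof.

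I do not expect any serious obstacle. The only delicate point is aligning the orientations of $f_1$ and $f_2$: since an orientation of a signed graph is fixed once and for all, and the all-positive circuit admits a $2$-flow under any orientation (by switching equivalence to the ordinary cycle, or by direct propagation), one simply fixes $\tau$ chosen for $f_1$ and builds $f_2$ on top of it. The combination value $3$ is dictated by the requirement $|f_1(e)|+3|f_2(e)|<6$, which is exactly the reason the sharp bound $6$ drops out.
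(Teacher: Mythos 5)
Your proposal is correct and follows essentially the same route as the paper: invoke Lemma~\ref{even-3-flow} to get a $3$-flow $f_1$ with $f_1=\pm1$ off $E(H)$, take a $2$-flow $f_2$ supported exactly on the all-positive Hamiltonian circuit, and combine them as $f_1+3f_2$. The value checks you give match the paper's (more tersely stated) argument.
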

    \begin{proof}
      According to Lemma \ref{even-3-flow}, $(G,\sigma)$ admits a $3$-flow $f_{1}$ such that $supp(f_{1})\supseteq E(G)\setminus E(H)$.
      It is important to note that $(G,\sigma)$ admits a $2$-flow $f_{2}$ with $supp(f_{2})=E(H)$, since $(H,\sigma)$ is all-positive.
      Therefore, $f_{1}+3f_{2}$ forms a $6$-NZF on $(G,\sigma)$.
    \end{proof}

    Let $H$ be a Hamiltonian circuit in the graph $G$, with the vertex sequence $v_{0}v_{1}\cdots v_{n-1}v_{0}$.
    Let $e_{1}$ and $e_{2}$ be two edges in $E(G)\setminus E(H)$.
    Suppose the ends of $e_{1}$ are $v_{i}$ and $v_{j}$, and the ends of $e_{2}$ are $v_{k}$ and $v_{l}$.
    We say that $e_{1}$ and $e_{2}$ are {\it intersect} along $H$ if $i<k<j<l$.
    Meanwhile, $e_{1}$ and $e_{2}$ are said to be {\it parallel} along $H$ if $k<i<j<l$ or $i<j<k<l$.
    These terms originate from plane geometry. When we draw the Hamiltonian circuit $H$ as a circle on a plane, and connect four distinct points with two line segments, these segments either intersect or do not intersect.

   The following lemma discusses the existence of a $6$-NZF in a signed graph that contains an all-positive Hamiltonian circuit and has two negative edges that intersecting along this circuit.

    \begin{lem}\label{odd-intersect-6-flow}
     Let $(G,\sigma)$ be a flow-admissible signed graph with an all-positive Hamiltonian circuit $(H,\sigma)$.
     If two negative edges intersect along $H$, then $(G,\sigma)$ admits a $6$-NZF.
    \end{lem}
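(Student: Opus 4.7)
The plan is a case analysis on the parity of $|E_{N}(G,\sigma)|$. If $|E_{N}(G,\sigma)|$ is even, then Lemma \ref{even-6-flow} applies directly to $(G,\sigma)$ and yields the desired $6$-NZF, so no further work is needed in that case. All the substance therefore lies in the case that $|E_{N}(G,\sigma)|$ is odd, where the intersecting pair $e_{1},e_{2}$ must be exploited.

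For the odd case, label $H$ as $v_{0}v_{1}\cdots v_{n-1}v_{0}$ so that $e_{1}=v_{i}v_{j}$ and $e_{2}=v_{k}v_{l}$ with $i<k<j<l$. This splits $H$ into four arcs $A$ (from $v_{i}$ to $v_{k}$), $B$ (from $v_{k}$ to $v_{j}$), $C$ (from $v_{j}$ to $v_{l}$), and $D$ (from $v_{l}$ to $v_{i}$). A direct sign computation shows that the circuit $C^{*}:=A\cup C\cup\{e_{1},e_{2}\}$ is balanced, since $\sigma(A)\sigma(C)\sigma(e_{1})\sigma(e_{2})=(+1)(+1)(-1)(-1)=+1$. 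This balanced circuit is the bridge that will let us transfer flow between $e_{1}$ and $e_{2}$.

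Next I would delete $e_{2}$ from $G$. The signed graph $(G\setminus e_{2},\sigma)$ still contains the all-positive Hamiltonian circuit $H$, and $|E_{N}(G\setminus e_{2},\sigma)|=|E_{N}(G,\sigma)|-1$ is now even. Lemma \ref{even-3-flow} then furnishes a $3$-flow $f$ on $(G\setminus e_{2},\sigma)$ whose support contains every chord of $G$ other than $e_{2}$. Extend $f$ to $(G,\sigma)$ by declaring $f(e_{2})=0$. Since $C^{*}$ is a balanced signed circuit, it admits a $2$-flow $g$ with $\mathrm{supp}(g)=E(C^{*})$; I would orient $C^{*}$ so that $g(e_{1})$ and $f(e_{1})$ do not cancel. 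Then $f+g$ is an integer flow on $(G,\sigma)$ whose support contains all of $E(G)\setminus E(H)$: the edge $e_{2}$ is covered by $g$, chords outside $C^{*}$ are covered by $f$, and $e_{1}$ is covered by the chosen orientation. Combining $f+g$ with a $2$-flow supported on $E(H)$ via the standard product construction $f_{1}+3f_{2}$ (as in the proof of Lemma \ref{even-6-flow}) then yields a $6$-NZF on $(G,\sigma)$.

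The main obstacle is controlling $|(f+g)(e)|$ on edges $e\in(A\cup C)\cap\mathrm{supp}(f)$, where one could have $|f(e)|=2$ and $|g(e)|=1$ with the same sign, so that $|(f+g)(e)|=3$ violates the $3$-flow bound. Addressing this either requires choosing the orientation of $g$ on $C^{*}$ together with a local adjustment of $f$ along $C^{*}$, or passing through $\mathbb{Z}_{3}$-flows and $\mathbb{Z}_{6}$-flows and invoking the equivalence between $\mathbb{Z}_{6}$-NZFs and integer $6$-NZFs on signed graphs in this setting. Ensuring compatibility of the two constructions on the overlap $A\cup C$ is the technical heart of the argument.
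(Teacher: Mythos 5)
Your reduction to the odd case and your use of Lemma \ref{even-3-flow} after deleting one of the two intersecting negative chords match the paper's strategy, and your observation that $A\cup C\cup\{e_{1},e_{2}\}$ is a balanced circuit is a correct way of exploiting the intersection hypothesis. However, the proof is not complete: the step you yourself flag as ``the technical heart'' --- controlling $|(f+g)(e)|$ on the arcs $A\cup C$ --- is exactly where the argument breaks, and you do not resolve it. If $f(e)=\pm 2$ and $g(e)=\pm 1$ agree in sign on an edge $e$ of $H$, then $(f+g)(e)=\pm 3$, and the final step $h\mapsto (f+g)+3h$ with $h=\pm 1$ on $H$ produces either $0$ or $\pm 6$ on that edge; neither is admissible for a $6$-NZF, and since $h$ is a single circulation on $H$ its sign cannot be chosen edge-by-edge. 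Your proposed fallback of ``invoking the equivalence between $\mathbb{Z}_{6}$-NZFs and integer $6$-NZFs'' does not exist for signed graphs: Tutte's equivalence of $\mathbb{Z}_k$-flows and integer $k$-flows fails in the signed setting (this is precisely why the paper proves the $\mathbb{Z}_4$-NZF result of Corollary \ref{BHC-Z4-flow} separately from the integer $6$-flow theorems).

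The paper avoids the magnitude problem by combining the two flows differently. Instead of adding a $2$-flow on your balanced circuit to the $3$-flow $f$, it constructs a $3$-flow $f_{2}$ on $H\cup\{e_{1},e_{2}\}$ taking value $1$ on \emph{every} edge of $H$ and value $2$ on \emph{both} negative chords (this is where the intersecting configuration is used: the four arcs can be oriented alternately so that Kirchhoff's law holds at the four chord-ends), and then forms $2f_{1}\pm f_{2}$, where $f_{1}$ is the $3$-flow from Lemma \ref{even-3-flow} on $G\setminus e_{1}$. The doubling makes $2f_{1}$ even everywhere while $f_{2}$ is odd on $H$, so the sum is automatically nonzero on $H$ with absolute value at most $2\cdot 2+1=5$; the only sign choice needed is the global $\pm$ to make the value at $e_{2}$ equal to $\pm4$ rather than $0$. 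If you want to salvage your version, you would need to carry out the ``local adjustment of $f$ along $C^{*}$'' explicitly, which amounts to reproving this kind of parity/magnitude bookkeeping; as written, the proof has a genuine gap.
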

    \begin{proof}
      If $\left| E_{N}(G,\sigma) \right|$ is even, then it is a direct corollary of Lemma \ref{even-6-flow}.
      Therefore, for the remainder of the proof, we assume that $\left| E_{N}(G,\sigma) \right|$ is odd.
      Let $e_{1}$ and $e_{2}$ be two negative edges that intersect along $H$.
      Then $(G\setminus e_{1},\sigma)$ is a signed graph with an all-positive Hamiltonian circuit, and $\left|E_{N}(G\setminus e_{1},\sigma) \right|$ is even.
      By Lemma \ref{even-3-flow}, $(G\setminus e_{1},\sigma)$ admits a $3$-flow $f_{1}$ and $E(G)\setminus (E(H)\cup \{e_{1}\})\subseteq supp(f_{1})$.
      Let $e_{1}=u_{1}v_{1}$ and $e_{2}=u_{2}v_{2}$.
      There exists a $3$-flow of $(H \cup \{e_{1}, e_{2}\}, \sigma)$, denoted by $f_{2}$, as illustrated in Fig. \ref{IH-3-flow} (omitting edges with a weight of $0$ and vertices of degree $2$).
      This $3$-flow is constructed as follows:
      \begin{equation}
      f_{2}(e) =
      \begin{cases}
       2, & \text{if } e \in \{e_{1}, e_{2}\}; \\
       1, & \text{if } e \in E(H). \nonumber
       \end{cases}
      \end{equation}

      Since $f_{1}(e_{2}) = \pm1$, $f_{2}(e_{2}) = 2$, and $f_{2}(E(H)) = \{1\}$, we conclude that either $2f_{1} + f_{2}$ or $2f_{1} - f_{2}$ is a $6$-NZF on $(G,\sigma)$.
    \end{proof}

    \begin{figure}[h]
    \centering
    \begin{minipage}[b]{.5\textwidth}
    \centering
    \begin{tikzpicture}[scale=0.8]
    %\draw[step=1,help lines] ( -2,-2 ) grid ( 2, 2);
    %画线

    \draw (0,0) circle (2);

    \draw[<-<] (-1,1.735) arc (120:150:2);%左上的定向
    \draw (-1.414,1.414)--(-1.414,1.414) node[midway,above]{1};% 标记边上的流值
    \draw[<-<] (1,-1.735) arc (300:330:2);%右下的定向
    \draw (1.414,-1.414)--(1.414,-1.414) node[midway,below]{1};% 标记边上的流值

    \draw[>->] (1.735,1) arc (30:60:2);%右上的定向
    \draw (1.414,1.414)--(1.414,1.414) node[midway,above]{1};
    \draw[>->] (-1.735,-1) arc (210:240:2);
    \draw (-1.414,-1.414)--(-1.414,-1.414) node[midway,below]{1};

    \draw [densely dashed](0,2)--(0,-2);%中间纵向的边
    \draw [densely dashed][>-<] (0,1)--(0,-1);%给定向
    \draw (0,1)--(0,1) node[midway,right]{2};%标记边上的流值
    \draw [densely dashed](-2,0)--(2,0);%中间?横向的边
    \draw [densely dashed][<->] (-1,0)--(1,0);%给定向
    \draw (-1,0)--(-1,0) node[midway,below]{2};%标记边上的流值

    %下面画点
   \fill (0,2) circle (.1);
   \node at (0,2)[above]{$u_{2}$};
   %\node at (-4,4)[left]{A};
   \fill (-2,0) circle (.1);
   \node at (-2,0)[left]{$u_{1}$};
   \fill (2,0) circle (.1);
   \node at (2,0)[right]{$v_{1}$};
   %\node at (-4,4)[left]{A};
   \fill (0,-2) circle (.1);
   \node at (0,-2)[below]{$v_{2}$};
     \end{tikzpicture}
      \caption{A $3$-NZF $f_{2}$ on $(H\cup \{e_{1},e_{2}\},\sigma)$.}\label{IH-3-flow}
      \end{minipage}%
      \begin{minipage}[b]{.5\textwidth}
    \centering
    \begin{tikzpicture}[scale=0.8]
    %\draw[step=1,help lines] ( -2,-2 ) grid ( 2, 2);
    %画线

    \draw (0,0) circle (2);

    %\draw[>->] (-1.28,1.535) arc (130:160:2);%左上的定向
    %\draw (-1.414,1.414)--(-1.414,1.414) node[midway,above]{1};% 标记边上的流值
    %\draw[<-<] (1.28,-1.535) arc (310:340:2);%右下的定向
    %\draw (1.414,-1.414)--(1.414,-1.414) node[midway,below]{1};% 标记边上的流值

    \draw[>->] (1.28,1.535) arc (50:-50:2);%右上的定向
    \draw (2,0)--(2,0) node[midway,right]{1};
    \draw[>->] (-1.28,-1.535) arc (230:130:2);
    \draw (-2,0)--(-2,0) node[midway,left]{1};

    \draw[>->] (0.7,1.88) arc (70:110:2);%正上的定向
    \draw (0,2)--(0,2) node[midway,above]{1};
    \draw[>->] (0.7,-1.88) arc (-70:-110:2);%正下的定向
    \draw (0,-2)--(0,-2) node[midway,above]{3};

    \draw [densely dashed](-1,1.75)--(-1,-1.75);%左边纵向的边
    \draw [densely dashed][>-<] (-1,0.7)--(-1,-0.7);%给定向
    \draw (-1,0)--(-1,0) node[midway,right]{2};%标记边上的流值

    \draw [densely dashed](1,1.75)--(1,-1.75);%右边纵向的边
    \draw [densely dashed][<->] (1,0.7)--(1,-0.7);%给定向
    \draw (1,0)--(1,0) node[midway,right]{2};%标记边上的流值

    %下面画点
   \fill (-1,1.75) circle (.1);
   \node at (-1,1.75)[above]{$u_{1}$};
   \fill (-1,-1.75) circle (.1);
   \node at (-1,-1.75)[below]{$v_{1}$};
   \fill (1,1.75) circle (.1);
   \node at (1,1.75)[above]{$u_{2}$};
   \fill (1,-1.75) circle (.1);
   \node at (1,-1.75)[below]{$v_{2}$};

     \end{tikzpicture}
      \caption{A $4$-NZF $f_{1}$ on $(G_{1},\sigma)$.}\label{G2-4-flow}
      \end{minipage}%
   \end{figure}

    We present the proof of Theorem \ref{BH-6-flow} below.

    \begin{proof}[The proof of Theorem \ref{BH-6-flow}]

      We assume that the balanced Hamiltonian circuit $(H, \sigma)$ is all-positive; otherwise, we switch at some vertices of $H$ to ensure that every edge in $H$ is positive.
      By Lemma \ref{equi}, we assume that $(G,\sigma)$ is a signed cubic graph.
      Consequently, any two edges in $E(G)\setminus E(H)$ are either intersecting or parallel along $H$.
      By Lemma \ref{even-6-flow} and Lemma \ref{odd-intersect-6-flow}, we can further assume that $\left| E_{N}(G,\sigma)\right|$ is odd and that any two negative edges are parallel along $H$.

      Let $e^{1}$ and $e^{2}$ be two negative edges with ends $u_{1}$, $v_{1}$ and $u_{2}$, $v_{2}$, respectively.
      Consider the path $P= v_{1}e_{0}w_{1}e_{1}w_{2}e_{2}\cdots w_{k}e_{k}v_{2}$, where $e_{i}$ is an edge and $w_{i}$ is a vertex, $i\in [0,k]$.
      This path in $H$ that connects $v_{1}$ and $v_{2}$ and does not contain $u_{1}$ and $u_{2}$ as vertices.
      We may assume that every vertex in $V(P)\setminus \{v_{1},v_{2}\}$ is incident only with positive edges.
      Otherwise, we replace $e_{1}$ by the negative edge incident to some vertex $w_{i}$, where $i\in [1,k]$.
      Thus, these two negative edges $e_{1}$ ,$e_{2}$ and path $P$ can definitely be found in the signed graph $(G,\sigma)$.

      Define $G_{1}=H\cup \{e^{1},e^{2}\}$.
      It is easy to see that $(G_{1},\sigma)$ admits a $4$-NZF $f_{1}$ (see Fig. \ref{G2-4-flow}, we omit the edge which weighted by $0$, and the vertices of degree $2$).
      Next, we will construct a $3$-flow $f_{2}$ on $(G,\sigma)$ such that $f_{1}+ 2f_{2}$ or $f_{1}- 2f_{2}$ is a $6$-NZF on $(G,\sigma)$.

      Let $M_{P}=\{e \in E(G): e\ has\ at\ least\ one\ end\ in\ P\}\setminus \{e^{1},e^{2}\}$.
      Note that, $\sigma(e)=+1$ for all $e \in M_{P}$.
      Let $M= E(G)\setminus (E(H)\cup \{e^{2}\})$.
      We will remove certain edges from $(G, \sigma)$ to obtain a signed subgraph $(G_{2}, \sigma)$ of $(G, \sigma)$.
      Depending on the parity of $|E(P)|$, we will discuss the structure of $(G_{2}, \sigma)$.
      There are two distinct cases to consider.\\

      \textbf{Case 1.} $\left| E(P) \right|$ is odd.

      After removing all edges of $\{ e^{2}, e_{0}, e_{2}, \cdots, e_{k}\}$ from $(G, \sigma)$, denote the resulting signed graph by $(G_{2}, \sigma)$.
      Since $\{e_{0}, e_{2}, \cdots, e_{k}\}$ is a matching in $G$, each vertex in $V(P) \setminus \{v_{1}, v_{2}\}$ has degree $2$ in $G_{2}$.
      Given that $M_{P}$ and $\{e_{1}, e_{3}, \cdots, e_{k-1}\}$ are disjoint matchings, $M_{P} \cup \{e_{1}, e_{3}, \cdots, e_{k-1}\}$ induces a disjoint union of paths and circuits, denoted by $P_{1}$, $P_{2}$, $\cdots$, $P_{x}$ and $C_{1}$, $C_{2}$, $\cdots$, $C_{y}$, respectively.
      Note that each circuit $C_{i}$ is a component of $G_{2}$, and $(C_{i}, \sigma)$ is all-positive for each $i \in [1, y]$.
      Let $P'$ be a path induced by the edge set $E(H) \setminus E(P)$, and let $M' = M \setminus M_{P}$.
      Note that in $G_{2}$, there is a single vertex $v_{2}$ of degree $1$, and no edge in $M'$ has $v_{2}$ as an end.
      Additionally, there is no path $P_{j}$ containing $v_{2}$ as an end, for $j \in [1, x]$.
      Therefore, the ends of every $P_{j}$ and each edge in $M'$ are vertices of degree $3$ in $G_{2}$.
      As a result, all edges in $M'$ have their ends in $V(P') \setminus \{v_{2}\}$, and likewise, each $P_{j}$ has its ends in $V(P') \setminus \{v_{2}\}$, for $j \in [1, x]$.
      Thus, $G_{2}$ has $y+1$ components.\\

      \textbf{Case 2.} $\left| E(P) \right|$ is even.

      Let $e^*$ be an edge incident with $v_{1}$ that is different from $e^{1}$ and $e_{0}$.
      Note that $e^*\in E(H)$.
      By removing the edges in the set $\{e^{*}, e^{2}, e_{1}, e_{3}, \cdots, e_{k}\}$ from $(G,\sigma)$, the resulting signed graph is denoted by $(G_{2},\sigma)$.
      Since the set $\{e^{*}, e_{1}, e_{3}, \cdots, e_{k}\}$ forms a matching in $G$, every vertex of $V(P)\setminus \{v_{2}\}$ has degree $2$ in $G_{2}$.
      Since $M_{P}\cup \{e^{1}\}$ and $\{e_{0},e_{2},\cdots,e_{k-1}\}$ are disjoint matchings, the union $M_{P}\cup \{e^{1},e_{0},e_{2},\cdots,e_{k-1}\}$ induces a disjoint union of paths and circuits.
      These are denoted by $P_{1}$, $P_{2}$, $\cdots$, $P_{x}$ and $C_{1}$, $C_{2}$, $\cdots$, $C_{y}$, respectively.
      It is easy to see that each circuit $C_{i}$ is a component of $G_{2}$.
      Since $e^{1}$ has only one end in $P$, it cannot be present in any circuit $C_{i}$, for $i \in [1,y]$.
      Thus, each $(C_{i},\sigma)$ is all-positive and $e^{1} \in E(P_{j})$ for a unique $j \in [1, y]$.
      Let $P'$ be a path induced by the edge set $E(H) \setminus (E(P)\cup \{e^{*}\})$ and let $M' = M \setminus (M_{P}\cup \{e^{1}\})$.
      Note that in $G_{2}$, there is only one vertex $v_{2}$ has degree $1$, and no edge in $M'$ containing $v_{2}$ as an end.
      Additionally, no path $P_{j}$ terminates at $v_{2}$ for $j \in [1, x]$.
      Therefore, the ends of every $P_{j}$ and every edge in $M'$ must be vertices of degree $3$ in $G_{2}$.
      Consequently, the ends of all edges in $M'$ are in $V(P')\setminus \{v_{2}\}$, and the ends of each $P_{j}$ are also within $V(P')\setminus \{v_{2}\}$ for $j \in [1, x]$.
      Thus, $G_{2}$ has $y+1$ components.\\

      Let $G_{3}$ be the component of $G_{2}$ that contains the $P'$.
      Define the set $$\mathcal{C}=\{C:\ C\ is\ the\ unique\ circuit\ in\ P'\cup e\ or\ P'\cup P_{j},\ j\in [1,x]\}.$$
      The symmetric difference $\mathop{\bigtriangleup }_{C\in \mathcal{C}} C$, denoted by $G_{4}$, contains all edges in $E(G_{3}) \setminus E(P')=M' \cup (\bigcup_{j\in [1,y]} E(P_{j}))=M\cup \{e_{1}, e_{3}, \cdots, e_{k-1}\}$.
      Let $M^{*}=M\cup \{e_{1}, e_{3}, \cdots, e_{k-1}\}$.
      Since $G_{4}$ is an even graph, $(G_{4}, \sigma)$ is a signed even graph.
      Every signed even graph admits a $\mathbb{Z}_{2}$-NZF, even if it is not necessarily flow-admissible.
      Thus, $(G_{4}, \sigma)$ admits a $\mathbb{Z}_{2}$-NZF $f_{4}$, and $M^{*} \subseteq \text{supp}(f_{4})$.
      Since $e^{2} \notin E(G_{4})$ and $M\subseteq E(G_{4})$, the signed graph $(G_{4}, \sigma)$ has an even number of negative edges.
      This means that $supp (f_{4})$ has an even number of negative edges.
      Given that $G_{3}$ is connected and $f_{4}$ is a $\mathbb{Z}_{2}$-flow on $(G_{3},\sigma)$, the signed graph $(G_{3},\sigma)$ admits a $3$-flow $f_{3}$ such that $M^{*}\subseteq \{e \in E(G_{4}): f_{3}(e)=\pm1 \}$.
      Since each $(C_{i},\sigma)$ is all-positive, $(C_{i},\sigma)$ admits a $2$-NZF $g_{i}$ for each $i$.
      By combining these, $f_{2}=f_{3}+\mathop{\sum}_{i \in [1,y]}g_{i}$ forms a $3$-flow on $(G_{2},\sigma)$.
      We can verify that $f_{1}+ 2f_{2}$ or $f_{1}- 2f_{2}$ is a $6$-NZF on $(G,\sigma)$.

      For $E(P)$, we have $f_{1}(E(P))=\{+3\}$ and $f_{2}(E(P))\subseteq\{0,+1,-1\}$.
      Therefore, $(f_{1}\pm 2f_{2})(E(P))\subseteq\{3,+5,-5\}$.

      For $E(H)\setminus E(P)$, we have $f_{1}(E(H)\setminus E(P))=\{+1\}$ and $f_{2}(E(H)\setminus E(P))\subseteq\{0,+1,-1,+2,-2\}$.
      Therefore, $(f_{1}\pm 2f_{2})(E(H)\setminus E(P))\subseteq\{+1,-1,+3,-3,+5\}$.

      For $M\setminus \{e_{1}\}$, we have $f_{1}(M\setminus\{e_{1}\})=\{0\}$ and $f_{2}(M\setminus\{e_{1}\})\subseteq\{+1,-1\}$.
      Therefore, $(f_{1}\pm 2f_{2})(M\setminus\{e_{1}\})\subseteq\{+2,-2\}$.

      For $e_{1}$, we have $f_{1}(e_{1})=+1$ and $f_{2}(e_{1})\in\{+1,-1\}$.
      Therefore, $(f_{1}\pm 2f_{2})(e_{1})\in\{-1,3\}$.

      For $e_{2}$, we have $f_{1}(e_{2})=+1$ and $f_{2}(e_{2})=0$.
      Therefore, $(f_{1}\pm 2f_{2})(e_{2})=+1$.

      Thus, $f_{1}+ 2f_{2}$ or $f_{1}- 2f_{2}$ is a $6$-NZF on $(G,\sigma)$.
    \end{proof}

    A {\it Kotzig graph} is a cubic graph that has three $1$-factors such that the union of any two of them induces a Hamiltonian circuit.
    Schubert et al. \cite{SS15} prove that every flow-admissible signed Kotzig graph admits a $6$-NZF, i.e., Theorem \ref{k-6-flow}.
    According to Theorem \ref{BH-6-flow}, we provide an alternative proof of Theorem \ref{k-6-flow} as follows.

    \begin{thm}\cite{SS15}\label{k-6-flow}
      Let $(G, \sigma )$ be a flow-admissible signed cubic graph. If $G$ is a Kotzig graph, then $(G, \sigma )$ admits a $6$-NZF.
    \end{thm}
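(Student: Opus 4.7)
The plan is to derive Theorem \ref{k-6-flow} directly from Theorem \ref{BH-6-flow} by exhibiting a balanced Hamiltonian circuit inside every signed Kotzig graph. Since a Kotzig graph $G$ comes with three pairwise edge-disjoint $1$-factors $F_1, F_2, F_3$ whose union is $E(G)$ and such that each $H_{ij} := F_i \cup F_j$ (for $1 \le i < j \le 3$) is a Hamiltonian circuit, we have three candidate Hamiltonian circuits, and it suffices to show at least one of them is balanced.

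The key step is a parity calculation on the signatures of the $1$-factors. Define $s_i = \prod_{e \in F_i} \sigma(e) \in \{+1,-1\}$ for $i \in \{1,2,3\}$. Because $F_i$ and $F_j$ are disjoint and their union equals $E(H_{ij})$, the sign of the Hamiltonian circuit $H_{ij}$ is
\begin{equation*}
\sigma(H_{ij}) \;=\; \prod_{e \in F_i} \sigma(e) \cdot \prod_{e \in F_j} \sigma(e) \;=\; s_i s_j.
\end{equation*}
Now observe
\begin{equation*}
(s_1 s_2)(s_1 s_3)(s_2 s_3) \;=\; s_1^2 s_2^2 s_3^2 \;=\; +1,
\end{equation*}
so an even number of the three values $s_1 s_2$, $s_1 s_3$, $s_2 s_3$ equal $-1$. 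In particular, at least one of them equals $+1$, say $s_i s_j = +1$. Therefore the Hamiltonian circuit $H_{ij}$ is balanced in $(G,\sigma)$.

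Finally, since $(G,\sigma)$ is flow-admissible and contains the balanced Hamiltonian circuit $H_{ij}$, Theorem \ref{BH-6-flow} immediately yields a $6$-NZF on $(G,\sigma)$. I do not expect any significant obstacle: the parity identity $s_1^2 s_2^2 s_3^2 = 1$ is the heart of the argument, and once a balanced Hamiltonian circuit is identified the result is handed to us by Theorem \ref{BH-6-flow}. The only subtlety worth noting is that the sign of a circuit is invariant under switching, so the quantities $s_i s_j$ are well-defined features of the switching equivalence class of $(G,\sigma)$, consistent with the switching-invariance of $k$-NZF-admissibility used throughout the paper.
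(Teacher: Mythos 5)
Your proposal is correct and is essentially the paper's own argument: the paper applies the Pigeonhole Principle to find two $1$-factors $F_i$, $F_j$ with $\left|E_N(F_i,\sigma)\right|\equiv\left|E_N(F_j,\sigma)\right|\pmod 2$, which is exactly your observation that some $s_is_j=+1$, and then both proofs invoke Theorem \ref{BH-6-flow} on the resulting balanced Hamiltonian circuit $F_i\cup F_j$.
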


    \begin{proof}
      Let $F_{1}$, $F_{2}$ and $F_{3}$ be the three $1$-factors of $G$ such that the union of any two of them induces a Hamiltonian circuit.
      By the Pigeonhole Principle, there exist distinct indices $i, j \in \{1, 2, 3\}$ such that $\left| E_{N}(F_{i}, \sigma) \right| \equiv \left| E_{N}(F_{j}, \sigma) \right| \pmod{2}$.
      Thus, $(F_{i}\cup F_{j},\sigma)$ is a balanced Hamiltonian circuit of $(G, \sigma )$.
      By Theorem \ref{BH-6-flow}, $(G, \sigma )$ admits a $6$-NZF.
    \end{proof}

    \section{Nowhere-zero flows on signed abelian Cayley graphs}
    In this section, the nowhere-zero flows on signed abelian Cayley graphs are studied. All groups considered in this paper are finite.

    \subsection{Nowhere-zero $6$-flows on signed abelian Cayley graphs}
    \

    In this subsection, it is shown that every flow-admissible signed abelian Cayley graph admits a $6$-NZF.

    Let $X$ be a group and let $S$ be a subset of $X$ that is closed under taking inverses and does not contain the identity.
    The Cayley graph $Cay(X, S)$ is defined with vertex set $X$, where two vertices $g$ and $h$ are adjacent if and only if $ hg^{-1} \in S$.
    A Cayley graph $Cay(X,S)$ is said to be abelian if $X$ is abelian.

    A graph in which every vertex has equal degree $k$ is called regular of valency $k$.
    %In 1996, Alspach et al. \cite{ALZ96} showed that every Cayley graph of valency at least $3$ on a solvable group admits a $4$-NZF.
   Because connected abelian Cayley graphs possess Hamiltonian circuits, they are supereulerian, and thus admit a $4$-NZF.
    Moreover, Poto\v{c}nik et al. \cite{PSS05} and N\'{a}n\'{a}siov\'{a} et al. \cite{NS09} showed that every abelian Cayley graph of valency at least $5$ admits a $3$-NZF.

    The main result of this subsection shows that such a class of flow-admissible signed $4$-NZF-admissible graphs admit a $6$-NZF, as follows.

    \begin{thm}\label{AC-6-flow}
      Every flow-admissible signed abelian Cayley graph admits a nowhere-zero $6$-flow.
    \end{thm}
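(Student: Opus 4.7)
The plan is to reduce Theorem~\ref{AC-6-flow} to Theorem~\ref{BH-6-flow} by exhibiting a balanced Hamiltonian circuit inside every flow-admissible signed abelian Cayley graph $(\Gamma,\sigma)$ with $\Gamma=Cay(A,S)$. Once such a circuit is produced, Theorem~\ref{BH-6-flow} immediately delivers a nowhere-zero $6$-flow. The existence of \emph{some} Hamiltonian circuit in any connected abelian Cayley graph is a classical theorem (Chen--Quimpo), so the real work lies in controlling the sign of that circuit.

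First I would dispose of the low-valency base case $|S|=2$. Here $\Gamma$ is itself a single circuit---either an orbit cycle of a non-involution generator, or the $4$-cycle on $\mathbb{Z}_2^2$ generated by two involutions. Proposition~\ref{equi-flow-admissible} together with flow-admissibility forces this unique circuit to be a signed circuit, hence balanced, and $\Gamma$ itself serves as a balanced Hamiltonian circuit on which Theorem~\ref{BH-6-flow} applies.

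For the main case $|S|\geq 3$, I would start from an arbitrary Hamiltonian circuit $H$ of $\Gamma$, and if $\sigma(H)=-1$ modify $H$ by \emph{square exchanges}. Commutativity of $A$ guarantees that for any two distinct generators $s,t\in S$ and any vertex $v$, the four vertices $v,\, vs,\, v(st),\, vt$ induce a $4$-cycle in $\Gamma$. Whenever two opposite edges of such a square lie on $H$ and the arcs of $H$ between their endpoints pair up compatibly, swapping in the other pair of opposite edges yields another Hamiltonian circuit that differs from $H$ in exactly two edges. Tracking the parity of $|E_{N}(H,\sigma)|$ under iterated square exchanges, I would argue that either some exchange flips the parity---producing a balanced Hamiltonian circuit---or else the negative edges are forced into a pattern so rigid that some edge of $\Gamma$ fails to be covered by any signed circuit, contradicting flow-admissibility via Proposition~\ref{equi-flow-admissible}.

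The main obstacle is guaranteeing the availability of a parity-flipping exchange in full generality; the delicate cases are $|S|\in\{3,4\}$, where the graph is sparse and square exchanges are scarce. In these cases one must lean on specific structural features of $Cay(A,S)$, and when $|A|$ is odd one can short-circuit the problem via Theorem~\ref{c-h-dec}, which already bounds the flow number by $4$. Small configurations such as the $Q_3$-example of Figure~\ref{AC has no 5-flow} can be verified by hand to contain a balanced Hamiltonian circuit. Once a balanced Hamiltonian circuit is secured in every case, Theorem~\ref{BH-6-flow} completes the proof of Theorem~\ref{AC-6-flow}.
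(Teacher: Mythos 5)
Your reduction rests on a claim that is actually false: not every flow-admissible signed abelian Cayley graph with $|S|\geq 3$ contains a balanced Hamiltonian circuit, so the ``square exchange'' dichotomy you propose (either a parity-flipping exchange exists or flow-admissibility fails) cannot be established. A concrete counterexample is the signed cube $(CL_{4},\sigma_{3})$ of Figure~\ref{-CL4}, i.e.\ $Q_{3}=Cay(\mathbb{Z}_2^3,\{e_1,e_2,e_3\})$ with $x_{0}x_{1}$, $y_{0}y_{1}$, $x_{0}y_{0}$ and $x_{2}y_{2}$ negative. This signed graph is flow-admissible (the paper exhibits a $6$-NZF on it), yet $Q_{3}$ has exactly six Hamiltonian circuits --- four obtained by replacing an opposite pair $\{x_{i}x_{i+1},y_{i}y_{i+1}\}$ with the two rungs $x_{i}y_{i},x_{i+1}y_{i+1}$, and two using all four rungs --- and a direct check shows each of them contains an odd number (either one or three) of the negative edges, hence all six are unbalanced. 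Your remark that the $Q_{3}$ example ``can be verified by hand to contain a balanced Hamiltonian circuit'' holds for the particular signature of Figure~\ref{AC has no 5-flow} but not for all flow-admissible signatures on $Q_{3}$, and this is exactly where the blanket reduction to Theorem~\ref{BH-6-flow} breaks down.

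The paper circumvents this obstruction in two ways that your proposal omits. For $|S|\geq 4$ it does not look for Hamiltonian circuits at all: the graph is $4$-edge-connected (being vertex-transitive of valency $\geq 4$), so Raspaud--Zhu's Theorem~\ref{4EN-6-flow} already yields a $4$-NZF. For $|S|=3$ the graph is a circular or M\"obius ladder; the M\"obius case and most circular-ladder cases are indeed settled by producing a balanced Hamiltonian circuit after suitable switchings (your strategy works there), but the residual circular-ladder subcases with alternating rung signs --- which include the counterexample above --- are handled by explicit flow constructions propagated along the ladder via the extender machinery of Lemma~\ref{e-f} (Theorem~\ref{C-6}, Subcases 1.2 and 3.2). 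Your fallback via Theorem~\ref{c-h-dec} only covers groups of odd order and so does not reach these $2$-group examples. To repair the argument you would need either the edge-connectivity input for $|S|\geq 4$ together with a direct flow construction for the bad cubic cases, or some substitute for Theorem~\ref{BH-6-flow} that applies without a balanced Hamiltonian circuit.
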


    To prove Theorem \ref{AC-6-flow}, we introduce two fundamental structures in abelian Cayley graphs: the circular ladder and the M\"{o}bius ladder.
    Let $n\geq 1$ be an integer.
    A cubic graph is called a {\it circular ladder} if it is isomorphic to $C_{n} \Box K_{2}$, denoted by $CL_{n}$. (For the definition of the Cartesian product of graphs, see \cite{BM08} p. 30.)
    Let $V(CL_{n})=\{x_{0},x_{1},\cdots, x_{n-1},y_{0},y_{1},\cdots, y_{n-1}\}$, and $E(CL_{n})=\{x_{i}y_{i}: i\in \mathbb{Z}_{n} \}\cup\{x_{i}x_{i+1}: i\in \mathbb{Z}_{n}\}\cup\{y_{i}y_{i+1}: i\in \mathbb{Z}_{n}\}$.
    A cubic graph is defined as a {\it M\"{o}bius ladder} if it can be obtained from $CL_{n}$ by removing edges $x_{n-1}x_{0}$ and $y_{n-1}y_{0}$, and adding edges $x_{n-1}y_{0}$ and $x_{0}y_{n-1}$.
    This graph is denoted by $ML_{n}$.

    The following lemma shows that every connected cubic abelian Cayley graph is isomorphic to either $CL_{n}$ or $ML_{n}$.
    An element $a$ of a group $X$ is an involution if $a^{2} = 1_{e}$, where $1_{e}$ is the identity element of $X$.
    Specifically, $a$ is a central involution if it is an involution and commutes with every element $b \in X$, i.e., $ab = ba$ for all $b \in X$.
    If $Cay(X,S)$ is cubic, then $S$ includes a involution of $X$, because $\left| S \right|=3$ and it is closed under taking inverses.
    Furthermore, if $X$ is abelian, then $S$ includes a central involution of $X$.

    \begin{lem}\cite{NS09}
      Let $Cay(X,S)$ be a connected cubic Cayley graph such that $S$ contains a central involution of $X$. Then $Cay(X,S)$ is isomorphic to $CL_{n}$ or $ML_{n}$.
    \end{lem}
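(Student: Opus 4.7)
The plan is to pin down the group structure of $X$ forced by the hypotheses and then exhibit an explicit graph isomorphism to $CL_n$ or $ML_n$ in each of the two structural cases that arise. Since $Cay(X,S)$ is cubic we have $|S|=3$, and since $S$ is closed under inverses and contains the involution $a$, the remaining two elements of $S$ must form a mutually inverse pair $\{b,b^{-1}\}$ with $b\ne b^{-1}$ (else $|S|\le 2$, contradicting cubicity). In particular, $b$ has order at least $3$. By connectedness of $Cay(X,S)$, $X=\langle a,b\rangle$, and because $a$ is a central involution, the structure of $X$ splits cleanly according to whether $a$ lies in $\langle b\rangle$.

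In the first case, $a\notin\langle b\rangle$, so $\langle a\rangle\cap\langle b\rangle=\{1_e\}$, and centrality of $a$ then yields $X=\langle a\rangle\cdot\langle b\rangle\cong \mathbb{Z}_2\times\mathbb{Z}_n$, where $n$ is the order of $b$. I would define $\phi\colon V(CL_n)\to X$ by $\phi(x_i)=b^i$ and $\phi(y_i)=ab^i$ for $i\in\mathbb{Z}_n$, and verify that $\phi$ sends the three edge types $x_ix_{i+1}$, $y_iy_{i+1}$, $x_iy_i$ of $CL_n$ to generator edges of $Cay(X,S)$ corresponding to multiplication by $b$, $b$ and $a$, respectively. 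This gives $Cay(X,S)\cong CL_n$.

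In the second case, $a\in\langle b\rangle$, so the unique involution of the cyclic group $\langle b\rangle$ must be $b^{n/2}$, forcing the order $n$ of $b$ to be even; write $n=2m$ with $m\ge 2$ (since $b$ is not itself an involution), so that $a=b^m$ and $X=\langle b\rangle\cong\mathbb{Z}_{2m}$, with $S$ corresponding to $\{1,-1,m\}$. I would then define $\psi\colon V(ML_m)\to X$ by $\psi(x_i)=i$ and $\psi(y_i)=m+i$ for $0\le i\le m-1$, and check edge-by-edge that the $3m$ edges of $ML_m$ are mapped to edges of $Cay(X,S)$; the crucial verification concerns the two twisted edges $x_{m-1}y_0$ and $x_0y_{m-1}$, which are sent to the differences $(m-1)-m=-1$ and $0-(2m-1)\equiv 1\pmod{2m}$, both of which lie in $S$.

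The step demanding the most care is the second case, where one must confirm that the construction genuinely picks out the M\"{o}bius ladder rather than the circular one, and handle the degenerate small value $m=2$ (where $\mathbb{Z}_4$ with $S=\{1,2,3\}$ collapses to $K_4$) by interpreting $ML_2$ as a $3$-regular multigraph consistent with the paper's conventions.
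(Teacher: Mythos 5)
The paper itself gives no proof of this lemma---it is quoted verbatim from \cite{NS09}---so your argument can only be judged on its own terms, and on those terms it has a genuine gap at the very first step. From $|S|=3$, closure under inverses, and $a\in S$ with $a^2=1_e$, you conclude that the other two elements of $S$ must be a mutually inverse pair $\{b,b^{-1}\}$ with $b\neq b^{-1}$. That is false: the two remaining elements $c,d$ could each be involutions themselves, in which case $S=\{a,c,d\}$ is still closed under inverses and still has three elements, so the graph is still cubic. This case really occurs under the hypotheses of the lemma --- for instance $X=\mathbb{Z}_2^3$ with $S$ the three standard generators gives the cube $Q_3\cong CL_4$, and $X=\mathbb{Z}_2\times D_n$ with $a$ the central involution and $c,d$ two reflections generating $D_n$ gives $CL_{2n}$. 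Your dichotomy ``$a\in\langle b\rangle$ or not'' never gets off the ground here because there is no element $b$ of order at least $3$ to speak of, and the conclusion of the lemma must still be verified: one has to argue that $\langle c,d\rangle$ is dihedral, that $Cay(\langle c,d\rangle,\{c,d\})$ is a single cycle, and then split according to whether $a$ lies in $\langle c,d\rangle$ (yielding $ML$) or not (yielding $CL$). None of this is in your write-up.

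The two cases you do treat are handled correctly: when $S=\{a,b,b^{-1}\}$ with $\mathrm{ord}(b)=n\geq 3$, centrality of $a$ does give $X\cong\mathbb{Z}_2\times\mathbb{Z}_n$ and $Cay(X,S)\cong CL_n$ when $a\notin\langle b\rangle$, and $X\cong\mathbb{Z}_{2m}$ with $S=\{\pm 1,m\}$ and $Cay(X,S)\cong ML_m$ when $a=b^{m}\in\langle b\rangle$; your explicit vertex maps and the edge-difference checks (including the two twisted edges and the degenerate $m=2$ case) are fine. But as it stands the proof establishes the lemma only for the subclass of cubic Cayley graphs whose connection set contains a non-involution, so you need to add the third-involution branch before the argument is complete.
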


    Thus, a connected cubic abelian Cayley graph is isomorphic to either a circular ladder or a M\"{o}bius ladder.
    The following theorem shows that, to prove Theorem \ref{AC-6-flow}, it suffices to show that every flow-admissible $(CL_{n},\sigma)$ and $(ML_{n},\tau)$ admits a $6$-NZF.
    Note that every component of $Cay(X, S)$ is $\left| S \right|$-edge-connected because $Cay(X, S)$ is vertex-transitive.
    Raspaud et al. \cite{RZ11} showed that every flow-admissible signed $4$-edge-connected graph admits a $4$-NZF, as follows.

    \begin{thm}\cite{RZ11}\label{4EN-6-flow}
      Let $G$ be a $4$-edge-connected graph.
      If $(G,\sigma)$ is flow-admissible, then $(G,\sigma)$ admits $4$-NZF.
    \end{thm}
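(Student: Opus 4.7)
The plan is to reduce the signed case to the classical unsigned $4$-flow theorem of Jaeger for $4$-edge-connected graphs, which itself follows from the Nash-Williams/Tutte theorem guaranteeing two edge-disjoint spanning trees in any $4$-edge-connected graph. A nowhere-zero $4$-flow on an ordinary graph is well known to be equivalent to an edge-cover $E(G) = E(F_1) \cup E(F_2)$ by two even (parity) subgraphs, obtained by viewing $\mathbb{Z}_2 \times \mathbb{Z}_2$ as the Klein four-group and noting that a $\mathbb{Z}_2 \times \mathbb{Z}_2$-NZF corresponds to two $\mathbb{Z}_2$-flows whose supports cover $E(G)$. For signed graphs the goal is therefore to produce such a pair $F_1, F_2$ where in addition each $(F_i, \sigma|_{F_i})$ carries a nowhere-zero integer $2$-flow; equivalently, each $F_i$ is an even subgraph with an even number of negative edges.

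First I would invoke Nash-Williams/Tutte on $G$ to extract two edge-disjoint spanning trees $T_1,T_2$, producing a base pair of edge-covering even subgraphs $F_1^{0}, F_2^{0}$ via the standard construction. Second, using Proposition~\ref{equi-flow-admissible}, the flow-admissibility hypothesis guarantees that every edge of $G$ lies in a signed circuit, and in particular that $G$ contains an even subgraph with an even number of negative edges covering any prescribed negative edge. The key combinatorial step is a \emph{parity-fixing} lemma: given any even subgraph $F$ and any even subgraph $D$ with a specified parity of $|E_N(D,\sigma)|$, the symmetric difference $F \triangle D$ is again even and shifts the negative-edge parity of $F$ by $|E_N(D,\sigma)| \pmod 2$. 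I would use the two edge-disjoint spanning trees to manufacture suitable $D$'s from fundamental circuits of $T_1$ and $T_2$, sliding an odd parity of negative edges between $F_1^{0}$ and $F_2^{0}$ until both have an even number of negative edges.

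Once both $(F_i, \sigma|_{F_i})$ are even signed subgraphs with an even number of negative edges, each admits a nowhere-zero integer $2$-flow $f_i$: by switching one can assume $F_i$ has no negative edges, then orient $F_i$ as a disjoint union of circuits so that the flow values $\pm 1$ balance at each vertex, and switch back. Finally, combine by $f = f_1 + 2 f_2$ (with an appropriate common orientation); since $\mathrm{supp}(f_1) \cup \mathrm{supp}(f_2) = E(G)$, no edge receives value $0$, and the values lie in $\{\pm 1, \pm 2, \pm 3\}$, giving the desired $4$-NZF on $(G,\sigma)$.

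The main obstacle is the parity-fixing step: guaranteeing that whenever $F_1^{0}$ contains an odd number of negative edges we can find an appropriate even subgraph $D$ with an odd number of negative edges that interacts well with $F_1^{0}$. Flow-admissibility rules out local obstructions (no edge lies in a balanced bridge component), and $4$-edge-connectivity provides the global flexibility via the two spanning trees, but some care is needed to ensure the parity transfer can actually be executed without creating obstructions; this is where the full strength of $4$-edge-connectivity (as opposed to mere bridgelessness) is genuinely used, since Bouchet's conjecture remains open for $3$-edge-connected signed graphs at the $4$-flow level.
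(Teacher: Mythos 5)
The paper does not prove this statement; it is imported verbatim from Raspaud and Zhu \cite{RZ11}, so there is no internal argument to compare against. Judged on its own terms, your proposal has a genuine gap at its final step. You arrange for each even subgraph $F_i$ to have an \emph{even number} of negative edges and then claim that ``by switching one can assume $F_i$ has no negative edges,'' concluding that $(F_i,\sigma|_{F_i})$ admits a nowhere-zero integer $2$-flow. This is false: switching can remove all negative edges only if $(F_i,\sigma)$ is \emph{balanced}, i.e.\ every circuit of $F_i$ has an even number of negative edges, which is strictly stronger than the total count being even. A short barbell (two unbalanced circuits meeting at a vertex, each with one negative edge) is an even subgraph with exactly two negative edges, is not balanced, and has flow number $3$, not $2$. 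So the combination $f_1+2f_2$ with values in $\{\pm1,\pm2,\pm3\}$ does not go through.

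What an even parity of negative edges actually buys on a signed graph is a nowhere-zero $\mathbb{Z}_2$-flow, and converting that to an integer flow costs more than you budget for: the standard conversion (Lemma~\ref{Z2-3-flow} of this paper, from \cite{CLLZ18}) yields only a $3$-flow taking values $\pm1$ on the prescribed support, and combining two such flows in the natural way lands at bounds like $6$ rather than $4$ --- which is precisely the arithmetic driving the $6$-flow theorems in this paper. To reach $4$ you would need each $F_i$ to be genuinely balanced (not merely of even negative parity), and you give no argument that the parity-fixing step can achieve balance; indeed you flag that step as the main obstacle without executing it. The actual argument in \cite{RZ11} uses the two edge-disjoint spanning trees in a more delicate way than a parity transfer between two even covers, so the proposal as written does not constitute a proof.
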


    Although we have not yet proven that every flow-admissible $(CL_{n},\sigma)$ and $(ML_{n},\tau)$ admits a $6$-NZF, we present a proof of Theorem \ref{AC-6-flow} here.

    \begin{proof}[The proof of Theorem \ref{AC-6-flow}]

      Let $\Gamma=Cay(X,S)$ be an abelian Cayley graph.
      Note that if $\Gamma=Cay(X,S)$ is not connected, then each component of $\Gamma=Cay(X,S)$ is isomorphic to an abelian Cayley graph $\Gamma_{1}=Cay(X_{1},S)$, where $X_{1}$ is a proper subgroup of $X$ and generated by $S$.

      Let $(\Gamma,\sigma)$ be flow-admissible.
      Since $(\Gamma,\sigma)$ is flow-admissible if and only if each component of $(\Gamma,\sigma)$ is flow-admissible, we can assume that each component of $(\Gamma,\sigma)$ is flow-admissible.
      Furthermore, if $\Gamma$ is not connected, then each component of $(\Gamma,\sigma)$ is isomorphic to a flow-admissible signed abelian Cayley graph.
      Thus, without loss of generality, we assume that $\Gamma$ is connected.

      If $\left|S\right|\geq 4$, then $\Gamma$ is $4$-edge-connected.
      By Theorem \ref{4EN-6-flow}, $(\Gamma,\sigma)$ admits a $4$-NZF.

      When $\left|S\right|= 3$, $\Gamma$ is isomorphic to either $CL_{n}$ or $ML_{n}$.
      By Theorems \ref{M-6} and \ref{C-6}, $(\Gamma,\sigma)$ admits a $6$-NZF.

      When $\left|S\right|= 2$, the signed graph $(\Gamma,\sigma)$ is a balanced circuit since $(\Gamma,\sigma)$ is flow-admissible.
      Consequently, there is a $2$-NZF in $(\Gamma,\sigma)$.

      For $\left|S\right|= 1$, $(\Gamma,\sigma)$ is not flow-admissible, leading to a contradiction.
    \end{proof}

    Let $G$ be a circular ladder or M\"{o}bius ladder.
    In the remainder of this subsection, we will prove that every flow-admissible $(G,\sigma)$ admits a $6$-NZF, as stated in Theorem \ref{M-6} and Theorem \ref{C-6}.
    In most cases, we can find a balanced Hamiltonian circuit in $(G,\sigma)$, and we usually assume that the balanced Hamiltonian circuit is all-positive due to the switching operation.
    Note that $(G,\sigma)$ is not flow-admissible if $\left| E_{N}(G,\sigma) \right| = 1$.

    The following theorem shows that every flow-admissible signed M\"{o}bius ladder admits a $6$-NZF.

    \begin{thm}\label{M-6}
      Every flow-admissible $(ML_{n},\sigma)$ admits a $6$-NZF.
    \end{thm}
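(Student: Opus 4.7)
\textbf{Proof plan for Theorem~\ref{M-6}.} The strategy is to exhibit a balanced Hamiltonian circuit in $(ML_n,\sigma)$ and then invoke Theorem~\ref{BH-6-flow}. Start from the ``outer'' Hamiltonian circuit
\[
H_0 = x_0x_1\cdots x_{n-1}y_0y_1\cdots y_{n-1}x_0,
\]
which uses all non-rung edges of $ML_n$, namely the two helical paths together with the two crossings $x_{n-1}y_0$ and $y_{n-1}x_0$. If $\sigma(H_0)=+1$, then $H_0$ is already a balanced Hamiltonian circuit and Theorem~\ref{BH-6-flow} finishes the proof immediately.

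Assume therefore that $\sigma(H_0)=-1$. For each $i\in\{0,1,\dots,n-1\}$ (with indices on the rungs read mod $n$), let $Q_i$ be the $4$-cycle whose opposite pair of ``rung edges'' is $\{x_iy_i,\,x_{i+1}y_{i+1}\}$ and whose other opposite pair is $\{x_ix_{i+1},\,y_iy_{i+1}\}$ for $i\in\{0,\dots,n-2\}$ and $\{x_{n-1}y_0,\,y_{n-1}x_0\}$ for $i=n-1$. In every case, $E(Q_i)\cap E(H_0)$ consists of exactly two edges while $E(Q_i)\setminus E(H_0)$ consists of exactly two rungs. The basic move is to perform the ``swap at $i$'', producing the edge set $H_i := E(H_0)\triangle E(Q_i)$. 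A direct routing check (split $H_0$ into the two arcs determined by removing $E(Q_i)\cap E(H_0)$, then reconnect them via the two rungs in $E(Q_i)\setminus E(H_0)$) shows that $H_i$ is again a Hamiltonian circuit, and clearly
\[
\sigma(H_i)=\sigma(H_0)\cdot\sigma(Q_i).
\]

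The crux is the counting identity
\[
\prod_{i=0}^{n-1}\sigma(Q_i)=\sigma(H_0),
\]
which I plan to verify as follows: each rung of $ML_n$ lies in exactly two consecutive $Q_i$'s and therefore contributes an even power to the product, whereas each edge of $H_0$ lies in exactly one $Q_i$ and therefore contributes exactly once. Thus the left-hand side reduces to $\prod_{e\in E(H_0)}\sigma(e)=\sigma(H_0)$. Combined with $\sigma(H_0)=-1$, this forces at least one index $i_0$ to satisfy $\sigma(Q_{i_0})=-1$, and for that $i_0$ the Hamiltonian circuit $H_{i_0}$ has $\sigma(H_{i_0})=(-1)\cdot(-1)=+1$, i.e., it is balanced.

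In either case $(ML_n,\sigma)$ contains a balanced Hamiltonian circuit, so Theorem~\ref{BH-6-flow} yields a nowhere-zero $6$-flow. The only substantive step is the product identity above together with the verification that each swap $H_0\mapsto H_i$ preserves Hamiltonicity; both are short once the quadrilaterals $Q_i$ are set up, so I do not anticipate any serious obstacle, and the argument works uniformly for all $n\ge 2$.
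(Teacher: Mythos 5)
Your proposal is correct. Both you and the paper reduce Theorem~\ref{M-6} to Theorem~\ref{BH-6-flow} by exhibiting a balanced Hamiltonian circuit in $(ML_n,\sigma)$, and both start from the same circuit $H_0$ induced by $E(ML_n)\setminus\{x_iy_i : i\in\mathbb{Z}_n\}$; the difference lies entirely in how the balanced circuit is produced when $H_0$ is unbalanced. The paper first switches so that $H_0$ carries a single negative edge, then runs a case analysis on the rung signs: it uses flow-admissibility twice to rule out the all-positive and all-negative rung configurations, locates an adjacent pair of rungs with opposite signs, and switches again to assemble an explicitly described all-positive Hamiltonian circuit. You instead set up the $n$ quadrilaterals $Q_i$, observe that $E(H_0)\triangle E(Q_i)$ is again a Hamiltonian circuit with $\sigma(H_i)=\sigma(H_0)\sigma(Q_i)$, and deduce from the multiplicity count (each rung in exactly two $Q_i$, each edge of $H_0$ in exactly one) that $\prod_i\sigma(Q_i)=\sigma(H_0)$, so some $Q_{i_0}$ is unbalanced and $H_{i_0}$ is balanced. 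I checked the routing for both the generic swaps ($i\le n-2$) and the swap at $i=n-1$ through the two crossing edges, and the incidence counts; all are as you claim. Your argument is shorter, avoids switching entirely, is manifestly symmetric in the index $i$, and in fact proves the slightly stronger statement that \emph{every} signed M\"obius ladder contains a balanced Hamiltonian circuit, with flow-admissibility needed only to invoke Theorem~\ref{BH-6-flow} afterward; the paper's proof buys nothing extra beyond an explicit description of the resulting circuit. The only caveat is the implicit assumption that $n$ is large enough for the quadrilaterals to be genuine $4$-cycles (the paper's proof makes the same implicit assumption), so a one-line remark disposing of the degenerate small cases would make the write-up airtight.
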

    \begin{proof}
      By Theorem \ref{BH-6-flow}, it suffices to prove that there is a balanced Hamiltonian circuit in $(ML_{n},\sigma)$.
      The edge set $E(ML_{n}) \setminus \{e \in E(ML_{n}) : e=x_{i}y_{i}, i\in \mathbb{Z}_{n} \}$ induces a Hamiltonian circuit in $ML_{n}$, denoted by $C$.
      If $(C,\sigma)$ is balanced, then we are done.
      Therefore, for the remainder of the proof, we assume that $(C,\sigma)$ is unbalanced.
      We may assume that $(C,\sigma)$ has precisely one negative edge, $x_{0}y_{n-1}$.
      Otherwise we switch at some vertices of $C$ such that $x_{0}y_{n-1}$ is negative and other edges are positive.

      If $\sigma(x_{0}y_{0})=\sigma(x_{n-1}y_{n-1})$, then the sequence $x_{0}x_{1}x_{2}\cdots x_{n-1}y_{n-1}y_{n-2}\cdots y_{0}x_{0}$ forms a balanced Hamiltonian circuit.

      Suppose now that $\sigma(x_{0}y_{0})\neq\sigma(x_{n-1}y_{n-1})$.
      Without loss of generality, let $x_{n-1}y_{n-1}$ be negative.
      We claim that there exists another negative edge in $\{x_{i}y_{i}:i\in [1,n-2] \}$.
      Suppose, to the contrary, that $\{x_{i}y_{i}:i\in [1,n-2] \}$ contains no negative edges.
      Then we switch at $y_{n-1}$.
      The resulting signed graph has only one negative edge, $y_{n-2}y_{n-1}$, which contradicts the fact that $(ML_{n},\sigma)$ is flow-admissible.

      Meanwhile, we claim that there is another positive edge in $\{x_{i}y_{i}:i\in [1,n-2] \}$.
      Suppose, to the contrary, that $\{x_{i}y_{i}:i\in [1,n-2] \}$ contains no positive edges.
      Then we switch at $\{y_{n-1}, y_{n-2}\cdots, y_{2},y_{1}\}$.
      The resulting signed graph has only one negative edge,  $y_{0}y_{1}$, which leads to a contradiction.

      Hence, there exists a pair $(j,j+1)$, where $j\in[1,n-2]$, such that $\sigma(x_{j}y_{j}) \neq \sigma(x_{j+1}y_{j+1})$.
      Without loss of generality, we assume that $\sigma(x_{j}y_{j})=-1$.
      Then we switch at $\{x_{0},x_{1}, x_{2},\cdots, x_{j}\}$.
      The resulting signed graph is denoted by $(ML_{n},\sigma')$.
      In this resulting signed graph, the subgraph $(C,\sigma')$ has only one negative edge, $x_{j}x_{j+1}$.
      Furthermore, both $x_{j}y_{j}$ and $x_{j+1}y_{j+1}$ are positive in $(ML_{n},\sigma')$.
      Thus, $(C\setminus\{x_{j}x_{j+1},y_{j}y_{j+1}\})\cup \{x_{j}y_{j},x_{j+1}y_{j+1}\}$ forms an all-positive Hamiltonian circuit $$x_{j}y_{j}y_{j-1}\cdots y_{0}x_{n-1}x_{n-2}\cdots x_{j+1}y_{j+1}y_{j+2}\cdots y_{n-1}x_{0}x_{1}\cdots x_{j}.$$
      Therefore, there is a balanced Hamiltonian circuit in $(ML_{n},\sigma)$.
    \end{proof}

    In the remainder of this subsection, we will prove that every flow-admissible $(CL_{n},\sigma)$ admits a $6$-NZF.
    Before we proceed, we need to introduce some notation and terminology.
    Let $C_{x}=x_{0}x_{1}\cdots x_{n-1}x_{0}$ and $C_{y}=y_{0}y_{1}\cdots y_{n-1}y_{0}$ be two circuits of $CL_{n}$, and let $M$ be the $1$-factor of $CL_{n}$ with edge set $\{x_{i}y_{i} : i\in [0,n-1]\}$.
    These three subgraphs are edge disjoint, and $CL_{n}=C_{x}\cup C_{y}\cup M$.

    For $(CL_{n},\sigma)$ with $\sigma(x_{i}x_{i+1})=\sigma(y_{i}y_{i+1})=+1$, where the indices $ i $ and $ i+1 $ are taken modulo $ n $, and $n\geq 3$.
    An {\it $(m,i)$-extender of $(CL_{n},\sigma)$} is a signed graph obtained from $(CL_{n},\sigma)$ by replacing $x_{i}x_{i+1}$ and $y_{i}y_{i+1}$ by two all-positive paths of length $m+1$, denoted by $P_{x_{i}}=x_{i}x^{i}_{1}x^{i}_{2}\cdots x^{i}_{m}x_{i+1}$ and $P_{y_{i}}=y_{i}y^{i}_{1}y^{i}_{2}\cdots y^{i}_{m}y_{i+1}$, respectively, and adding edges $x^{i}_{j}y^{i}_{j}$ for $j\in [1,m]$, where $x^{i}_{j}y^{i}_{j}$ is negative if $j$ is odd, and positive if $j$ is even.
    There is an example, as shown in Fig. \ref{extending}.
    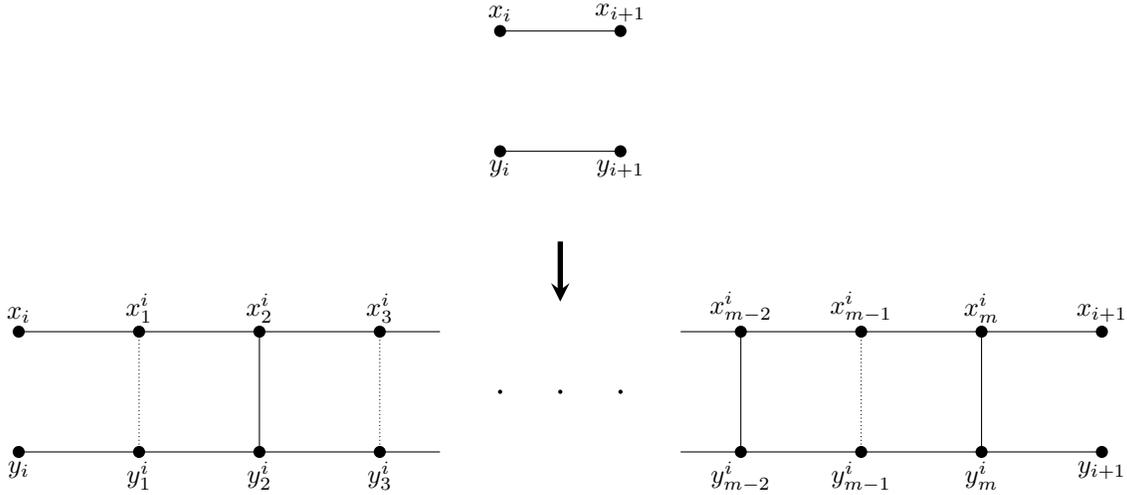
\begin{figure}[h]
    \centering
    \begin{tikzpicture}[scale=0.8]
    %\draw[step=1,help lines] ( -9,-4 ) grid ( 9, 4);
    %画线

        \draw (-1,4) -- (1,4);
        \draw (-1,2) -- (1,2);

        \draw (-9,-1) -- (-2,-1);
        \draw (-9,-3) -- (-2,-3);
        \draw (9,-1) -- (2,-1);
        \draw (9,-3) -- (2,-3);

        \draw [densely dotted](-7,-3) -- (-7,-1);
        \draw (-5,-3) -- (-5,-1);
        \draw [densely dotted](-3,-3) -- (-3,-1);
        \draw (7,-3) -- (7,-1);
        \draw [densely dotted](5,-3) -- (5,-1);
        \draw (3,-3) -- (3,-1);

        %标点与边
      \fill (-1,4) circle (.1);
        \node at (-1,4)[above]{$x_{i}$};
        \fill (1,4) circle (.1);
        \node at (1,4)[above]{$x_{i+1}$};

        \fill (-1,2) circle (.1);
        \node at (-1,2)[below]{$y_{i}$};
        \fill (1,2) circle (.1);
        \node at (1,2)[below]{$y_{i+1}$};

        \fill (-9,-1) circle (.1);
        \node at (-9,-1)[above]{$x_{i}$};
        \fill (-9,-3) circle (.1);
        \node at (-9,-3)[below]{$y_{i}$};
        \fill (-7,-1) circle (.1);
        \node at (-7,-1)[above]{$x^{i}_{1}$};
        \fill (-7,-3) circle (.1);
        \node at (-7,-3)[below]{$y^{i}_{1}$};
        \fill (-5,-1) circle (.1);
        \node at (-5,-1)[above]{$x^{i}_{2}$};
        \fill (-5,-3) circle (.1);
        \node at (-5,-3)[below]{$y^{i}_{2}$};
        \fill (-3,-1) circle (.1);
        \node at (-3,-1)[above]{$x^{i}_{3}$};
        \fill (-3,-3) circle (.1);
        \node at (-3,-3)[below]{$y^{i}_{3}$};
        \fill (9,-1) circle (.1);
        \node at (9,-1)[above]{$x_{i+1}$};
        \fill (9,-3) circle (.1);
        \node at (9,-3)[below]{$y_{i+1}$};
        \fill (7,-1) circle (.1);
        \node at (7,-1)[above]{$x^{i}_{m}$};
        \fill (7,-3) circle (.1);
        \node at (7,-3)[below]{$y^{i}_{m}$};
        \fill (5,-1) circle (.1);
        \node at (5,-1)[above]{$x^{i}_{m-1}$};
        \fill (5,-3) circle (.1);
        \node at (5,-3)[below]{$y^{i}_{m-1}$};
        \fill (3,-1) circle (.1);
        \node at (3,-1)[above]{$x^{i}_{m-2}$};
        \fill (3,-3) circle (.1);
        \node at (3,-3)[below]{$y^{i}_{m-2}$};

        \fill (-1,-2) circle (1pt);
        \fill (0,-2) circle (1pt);
        \fill (1,-2) circle (1pt);%中间的点.

      %定向与赋值

        %箭头
         \draw[-stealth,line width=2pt,black] (0,0.5)--(0,-0.5);

     \end{tikzpicture}
      \caption{A extending of $x_{i}x_{i+1}$ and $y_{i}y_{i+1}$, where $m$ is even.}\label{extending}
   \end{figure}
    It is easy to see that the $(m,i)$-extender of $(CL_{n},\sigma)$ is isomorphic to a signed circular ladder with underlying graph $CL_{n+m}$.
    Additionally, the $(0,i)$-extender of $(CL_{n},\sigma)$ is simply $(CL_{n},\sigma)$.

    The following lemma shows that a $k$-NZF of $(CL_{n},\sigma)$ can be extended to a $k$-NZF of the $(4q,i)$-extender of $(CL_{n},\sigma)$ in certains cases, where $k\geq4$ and $q$ are integers.

    \begin{lem}\label{e-f}
      Let $k\geq 4$, $n\geq3$ and $q\geq 0$ be integers, and let $\sigma(x_{i}x_{i+1})=\sigma(y_{i}y_{i+1})=+1$ in $(CL_{n},\sigma)$, where the indices are considered modulo $n$.

      {\rm(1)} If there exists a $k$-NZF $f$ on $(CL_{n},\sigma)$ such that $f(x_{i}x_{i+1})=\pm1$ and $f(y_{i}y_{i+1})=\pm2$, then the $(4q,i)$-extender of $(CL_{n},\sigma)$ admits a $k$-NZF.

      {\rm(2)} If there exists a $k$-NZF $f$ on $(CL_{n},\sigma)$ such that $f(x_{i}x_{i+1})=\pm1$ and $f(y_{i}y_{i+1})=\pm1$, then the $(4q,i)$-extender of $(CL_{n},\sigma)$ admits a $k$-NZF.
    \end{lem}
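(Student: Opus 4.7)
The plan is to leave $f$ unchanged on all edges of $(CL_n,\sigma)$ outside $\{x_ix_{i+1},y_iy_{i+1}\}$ and construct a periodic ``local flow'' on the inserted ladder segment $P_{x_i}\cup P_{y_i}$ together with the $4q$ new rungs. Because this segment decomposes into $q$ consecutive blocks of four rungs, each block carrying the same sign pattern $(-,+,-,+)$ on its rungs and all-positive horizontal edges, it suffices to construct a local flow on one such block whose entry and exit values on the top and bottom paths equal $f(x_ix_{i+1})$ and $f(y_iy_{i+1})$ respectively; $q$-fold concatenation then produces a $k$-NZF on the extender, and the case $q=0$ is trivial since the $(0,i)$-extender equals $(CL_n,\sigma)$.

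Fix one block. Orient each of the five top path edges and five bottom path edges from left to right and label their flow values $a_0,\dots,a_4$ and $b_0,\dots,b_4$; orient each positive rung downward, and each negative rung so that both half-edges point outward, and label the rung flows $r_1,r_2,r_3,r_4$. Writing out $\partial f(v)=0$ at each of the six internal vertices gives the recurrences
\[a_j=a_{j-1}-r_j\qquad(j=1,2,3,4),\]
together with $b_j=b_{j-1}-r_j$ when $j$ is odd (negative rung) and $b_j=b_{j-1}+r_j$ when $j$ is even (positive rung). The self-closing conditions $a_4=a_0$ and $b_4=b_0$ (required so that the block matches $f$ at both ends and concatenation is consistent across blocks) reduce to $r_1+r_2+r_3+r_4=0$ and $r_1+r_3=r_2+r_4$, which together force $r_3=-r_1$ and $r_4=-r_2$. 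This leaves the two integers $r_1,r_2$ as free parameters.

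It remains to choose $r_1,r_2$ so that all nine values $r_1,r_2,r_3,r_4,a_1,a_2,a_3,b_1,b_2,b_3$ are nonzero and of absolute value at most $k-1\ge 3$; thus targeting $\{\pm1,\pm2,\pm3\}$ is enough. For Statement~(1), where $|a_0|=1$ and $|b_0|=2$, I will take $(r_1,r_2)=-\mathrm{sgn}(a_0)\cdot(1,1)$ and $(r_3,r_4)=\mathrm{sgn}(a_0)\cdot(1,1)$, which gives the four path sequences $(|a_j|)=(1,2,3,2,1)$ and $(|b_j|)=(2,3,2,1,2)$ up to uniform sign, as a direct check in each of the four sign combinations of $(a_0,b_0)$ confirms. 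For Statement~(2), where $|a_0|=|b_0|=1$, the analogous choice $(r_1,r_2)=\mathrm{sgn}(a_0)\cdot(2,2)$ and $(r_3,r_4)=-\mathrm{sgn}(a_0)\cdot(2,2)$ produces path values in $\{\pm1,\pm3\}$ and bottom values in $\{\pm1,\pm3\}$, again verified case by case.

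I expect the main bookkeeping obstacle to lie in fixing the orientation of negative rungs correctly, since both of their half-edges must carry the same value of $\tau$, and in ensuring that the recurrences are derived consistently with the rule $\tau(h^u_e)\tau(h^v_e)=-\sigma(e)$. Once this is pinned down, the verification of nowhere-zero-ness reduces to finitely many integer inequalities in the two parameters $r_1,r_2$ across the four sign cases for $(a_0,b_0)$ in each Statement, each of which the explicit choice above satisfies.
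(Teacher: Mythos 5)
Your proposal is correct and follows essentially the same route as the paper: you extend $f$ unchanged along the two inserted paths and superimpose a periodic circulation on each block of four rungs, choosing its sign and magnitude (rung values $\pm1$ in case (1), $\pm2$ in case (2)) according to $\mathrm{sgn}(f(x_ix_{i+1}))$ so that no cancellation occurs and all values stay in $\{\pm1,\pm2,\pm3\}$. The paper packages this as $f_1\pm f_2$ (resp.\ $f_1\pm 2f_2$) for a fixed auxiliary $3$-flow $f_2$ and verifies the four sign cases by figures, which is exactly your two-parameter family after imposing $r_3=-r_1$, $r_4=-r_2$; the only blemish in your write-up is the miscount of ``nine'' values where ten are listed.
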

    \begin{proof}
      We may assume that $q \geq 1$ since the statements hold trivially when $q = 0$.
      Denote the $(4q,i)$-extender of $(CL_{n},\sigma)$ by $(G,\tau)$.
      Let $P_{x_{i}}=x_{i}x^{i}_{1}x^{i}_{2}\cdots x^{i}_{4q}x_{i+1}$ and $P_{y_{i}}=y_{i}y^{i}_{1}y^{i}_{2}\cdots y^{i}_{4q}y_{i+1}$.
      Set $M^{*}=\{x^{i}_{j}y^{i}_{j}\in E(G):j\in[1,4q]\}$.
      There exists a $k$-flow of $(G,\tau)$, denoted by $f_{1}$, obtained from $f$, as follows.
      \begin{equation}
  f_{1}(e)=\begin{cases}
  f(e),&  e\in E(CL_{n})\setminus \{x_{i}x_{i+1},y_{i}y_{i+1}\};\\
  f(x_{i}x_{i+1}),&  e\in E(P_{x_{i}});\\
  f(y_{i}y_{i+1}),&  e\in E(P_{y_{i}});\\
  0,&  e\in M^{*}.\nonumber
  \end{cases}
  \end{equation}
   Namely, $supp(f_{1})=E(G)\setminus M^{*}$.
   Next, we construct another $3$-flow $f_{2}$ on $(G,\sigma)$ such that $M^{*} \subseteq supp(f_{2})$.
   The expression for $ f_{2} $ is detailed below, and we suggest readers refer to Fig. \ref{HL} for a visual representation to aid understanding.
     \begin{equation}
  f_{2}(e) =
\begin{cases}
1, & e \in M^{*}; \\
2, & e \in \{ x^{i}_{4l+2}x^{i}_{4l+3} : l \in [0, q-1] \}; \\
1, & e \in \{ x^{i}_{2l+1}x^{i}_{2l+2} : l \in [0, 2q-1] \}; \\
1, & e \in \{ y^{i}_{4l+1}y^{i}_{4l+2} : l \in [0, q-1] \}; \\
-1, & e \in \{ y^{i}_{4l+3}y^{i}_{4l+4} : l \in [0, q-1] \}; \\
0, & \text{otherwise.}\nonumber
\end{cases}
  \end{equation}

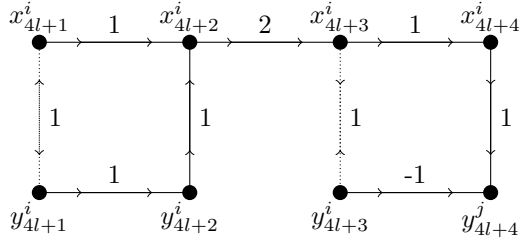
\begin{figure}[h]
  \centering
  % Requires \usepackage{graphicx}
   \begin{tikzpicture}[scale=1]
  %\draw[step=1,help lines] ( -3,0 ) grid ( 3, 2);
  %框架
  \draw[solid] (-3,2)--(3,2);
  \draw[densely dotted] (-3,2)--(-3,0);
  \draw[solid] (-3,0)--(-1,0);
  \draw[solid] (-1,0)--(-1,2);
  \draw[densely dotted] (1,2)--(1,0);
  \draw[solid] (1,0)--(3,0);
  \draw[solid] (3,0)--(3,2);

  %加箭头
  %左边的正方形
  \draw [>->] (-2.5,2)--(-1.5,2) node[above,midway]{1};
  \draw [densely dotted][<->] (-3,1.5)--(-3,0.5) node[right,midway]{1};
  \draw [>->] (-2.5,0)--(-1.5,0) node[above,midway]{1};
  \draw [>->] (-1,0.5)--(-1,1.5) node[right,midway]{1};
  %中间的桥
  \draw [>->] (-0.5,2)--(0.5,2) node[above,midway]{2};
  %右边的正方形
  \draw [>->] (1.5,2)--(2.5,2) node[above,midway]{1};
  \draw [densely dotted][>-<] (1,1.5)--(1,0.5) node[right,midway]{1};
  \draw [>->] (1.5,0)--(2.5,0) node[above,midway]{-1};
  \draw [<-<] (3,0.5)--(3,1.5) node[right,midway]{1};

   %下面画点
   \fill (-3,2) circle (.1);
   \node at (-3,2)[above]{$x^{i}_{4l+1}$};
   \fill (-3,0) circle (.1);
   \node at (-3,0) [below]{$y^{i}_{4l+1}$};
   \fill (-1,0) circle (.1);
   \node at (-1,0)[below]{$y^{i}_{4l+2}$};
   \fill (-1,2) circle (.1);
   \node at (-1,2)[above]{$x^{i}_{4l+2}$};

    \fill (1,2) circle (.1);
   \node at (1,2)[above]{$x^{i}_{4l+3}$};
   \fill (1,0) circle (.1);
   \node at (1,0)[below]{$y^{i}_{4l+3}$};
   \fill (3,0) circle (.1);
   \node at (3,0)[below]{$y^{j}_{4l+4}$};
   \fill (3,2) circle (.1);
   \node at (3,2)[above]{$x^{i}_{4l+4}$};

  \end{tikzpicture}
  \caption{A fragment of $f_{2}$.}\label{HL}
\end{figure}
  (1) Given $f(x_{i}x_{i+1})=\pm1$ and $f(y_{i}y_{i+1})=\pm2$, there are four cases that need to be considered.
  As illustrated in Fig. \ref{f1+f2}, for any $e\in E(P_{x})\cup E(P_{y})\cup M$, it holds that $\left|(f_{1}+f_{2})(e)\right|\leq 3$ or $\left|(f_{1}-f_{2})(e)\right|\leq 3$.
  Given that $f_{1}$ is a $k$-flow with $k\geq4$, it follows that $f_{1}+f_{2}$ or $f_{1}-f_{2}$ is a $k$-NZF on $(G,\tau)$.

  (2) In a similar manner, either $ f_{1} + 2f_{2} $ or $ f_{1} - 2f_{2} $ forms a $ k $-NZF on $(G, \tau)$, as depicted in Fig. \ref{f1+2f2}.

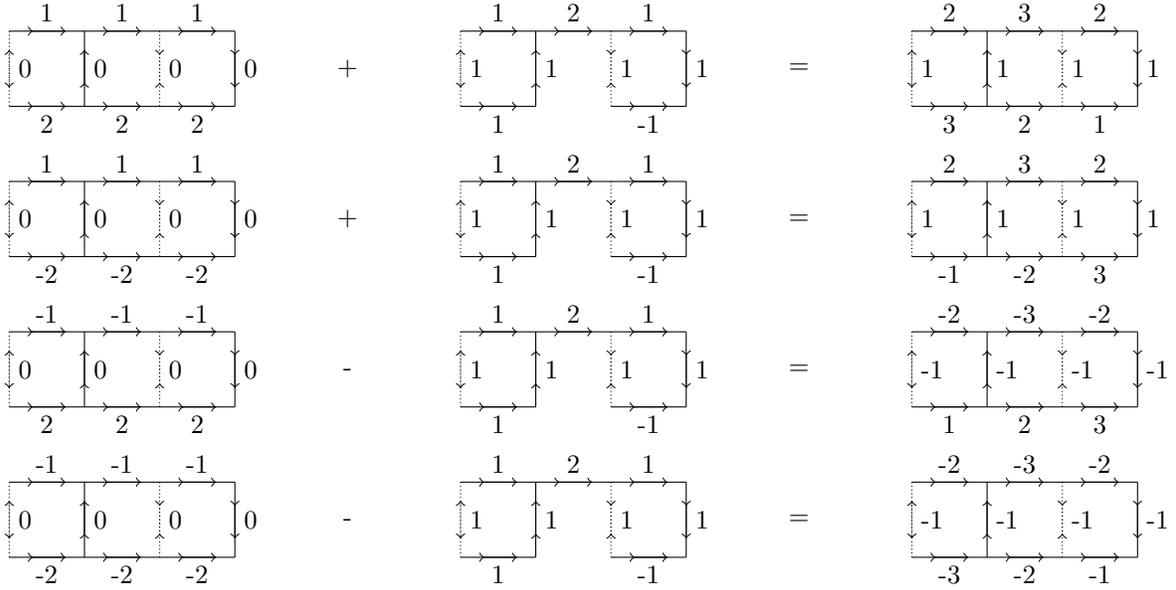
\begin{figure}[h]
  \centering
  \begin{tikzpicture}[scale=0.5]
    %\draw[step=1,help lines] ( -15,-3 ) grid ( 15, 3);

  \draw[solid] (-15,12)--(-9,12);
  \draw[densely dotted] (-15,12)--(-15,10);
  \draw[solid] (-15,10)--(-9,10);
  \draw[solid] (-13,10)--(-13,12);
  \draw[densely dotted] (-11,12)--(-11,10);
  \draw[solid] (-9,10)--(-9,12);

  %加箭头
  %左边
  \draw [>->] (-14.5,12)--(-13.5,12) node[above,midway]{1};
  \draw [densely dotted][<->] (-15,11.5)--(-15,10.5) node[right,midway]{0};
  \draw [>->] (-14.5,10)--(-13.5,10) node[below,midway]{2};
  \draw [>->] (-13,10.5)--(-13,11.5) node[right,midway]{0};
  %中间
  \draw [>->] (-12.5,12)--(-11.5,12) node[above,midway]{1};
  \draw [>->] (-12.5,10)--(-11.5,10) node[below,midway]{2};
  %右边
  \draw [>->] (-10.5,12)--(-9.5,12) node[above,midway]{1};
  \draw [densely dotted][>-<] (-11,11.5)--(-11,10.5) node[right,midway]{0};
  \draw [>->] (-10.5,10)--(-9.5,10) node[below,midway]{2};
  \draw [<-<] (-9,10.5)--(-9,11.5) node[right,midway]{0};

  \node at (-6,11){+};

    \draw[solid] (-3,12)--(3,12);
  \draw[densely dotted] (-3,12)--(-3,10);
  \draw[solid] (-3,10)--(-1,10);
  \draw[solid] (-1,10)--(-1,12);
  \draw[densely dotted] (1,12)--(1,10);
  \draw[solid] (1,10)--(3,10);
  \draw[solid] (3,10)--(3,12);

  %加箭头
  %左边的正方形
  \draw [>->] (-2.5,12)--(-1.5,12) node[above,midway]{1};
  \draw [densely dotted][<->] (-3,11.5)--(-3,10.5) node[right,midway]{1};
  \draw [>->] (-2.5,10)--(-1.5,10) node[below,midway]{1};
  \draw [>->] (-1,10.5)--(-1,11.5) node[right,midway]{1};
  %中间的桥
  \draw [>->] (-0.5,12)--(0.5,12) node[above,midway]{2};
  %右边的正方形
  \draw [>->] (1.5,12)--(2.5,12) node[above,midway]{1};
  \draw [densely dotted][>-<] (1,11.5)--(1,10.5) node[right,midway]{1};
  \draw [>->] (1.5,10)--(2.5,10) node[below,midway]{-1};
  \draw [<-<] (3,10.5)--(3,11.5) node[right,midway]{1};

    \node at (6,11){=};

    \draw[solid] (9,12)--(15,12);
  \draw[densely dotted] (9,12)--(9,10);
  \draw[solid] (9,10)--(15,10);
  \draw[solid] (11,10)--(11,12);
  \draw[densely dotted] (13,12)--(13,10);
  \draw[solid] (15,10)--(15,12);

  %加箭头
  %左边
  \draw [>->] (9.5,12)--(10.5,12) node[above,midway]{2};
  \draw [densely dotted][<->] (9,11.5)--(9,10.5) node[right,midway]{1};
  \draw [>->] (9.5,10)--(10.5,10) node[below,midway]{3};
  \draw [>->] (11,10.5)--(11,11.5) node[right,midway]{1};
  %中间
  \draw [>->] (11.5,12)--(12.5,12) node[above,midway]{3};
  \draw [>->] (11.5,10)--(12.5,10) node[below,midway]{2};
  %右边
  \draw [>->] (13.5,12)--(14.5,12) node[above,midway]{2};
  \draw [densely dotted][>-<] (13,11.5)--(13,10.5) node[right,midway]{1};
  \draw [>->] (13.5,10)--(14.5,10) node[below,midway]{1};
  \draw [<-<] (15,10.5)--(15,11.5) node[right,midway]{1};
  %******************************************************

  \draw[solid] (-15,8)--(-9,8);
  \draw[densely dotted] (-15,8)--(-15,6);
  \draw[solid] (-15,6)--(-9,6);
  \draw[solid] (-13,6)--(-13,8);
  \draw[densely dotted] (-11,8)--(-11,6);
  \draw[solid] (-9,6)--(-9,8);

  %加箭头
  %左边
  \draw [>->] (-14.5,8)--(-13.5,8) node[above,midway]{1};
  \draw [densely dotted][<->] (-15,7.5)--(-15,6.5) node[right,midway]{0};
  \draw [>->] (-14.5,6)--(-13.5,6) node[below,midway]{-2};
  \draw [>->] (-13,6.5)--(-13,7.5) node[right,midway]{0};
  %中间
  \draw [>->] (-12.5,8)--(-11.5,8) node[above,midway]{1};
  \draw [>->] (-12.5,6)--(-11.5,6) node[below,midway]{-2};
  %右边
  \draw [>->] (-10.5,8)--(-9.5,8) node[above,midway]{1};
  \draw [densely dotted][>-<] (-11,7.5)--(-11,6.5) node[right,midway]{0};
  \draw [>->] (-10.5,6)--(-9.5,6) node[below,midway]{-2};
  \draw [<-<] (-9,6.5)--(-9,7.5) node[right,midway]{0};

  \node at (-6,7){+};

    \draw[solid] (-3,8)--(3,8);
  \draw[densely dotted] (-3,8)--(-3,6);
  \draw[solid] (-3,6)--(-1,6);
  \draw[solid] (-1,6)--(-1,8);
  \draw[densely dotted] (1,8)--(1,6);
  \draw[solid] (1,6)--(3,6);
  \draw[solid] (3,6)--(3,8);

  %加箭头
  %左边的正方形
  \draw [>->] (-2.5,8)--(-1.5,8) node[above,midway]{1};
  \draw [densely dotted][<->] (-3,7.5)--(-3,6.5) node[right,midway]{1};
  \draw [>->] (-2.5,6)--(-1.5,6) node[below,midway]{1};
  \draw [>->] (-1,6.5)--(-1,7.5) node[right,midway]{1};
  %中间的桥
  \draw [>->] (-0.5,8)--(0.5,8) node[above,midway]{2};
  %右边的正方形
  \draw [>->] (1.5,8)--(2.5,8) node[above,midway]{1};
  \draw [densely dotted][>-<] (1,7.5)--(1,6.5) node[right,midway]{1};
  \draw [>->] (1.5,6)--(2.5,6) node[below,midway]{-1};
  \draw [<-<] (3,6.5)--(3,7.5) node[right,midway]{1};

    \node at (6,7){=};

    \draw[solid] (9,8)--(15,8);
  \draw[densely dotted] (9,8)--(9,6);
  \draw[solid] (9,6)--(15,6);
  \draw[solid] (11,6)--(11,8);
  \draw[densely dotted] (13,8)--(13,6);
  \draw[solid] (15,6)--(15,8);

  %加箭头
  %左边
  \draw [>->] (9.5,8)--(10.5,8) node[above,midway]{2};
  \draw [densely dotted][<->] (9,7.5)--(9,6.5) node[right,midway]{1};
  \draw [>->] (9.5,6)--(10.5,6) node[below,midway]{-1};
  \draw [>->] (11,6.5)--(11,7.5) node[right,midway]{1};
  %中间
  \draw [>->] (11.5,8)--(12.5,8) node[above,midway]{3};
  \draw [>->] (11.5,6)--(12.5,6) node[below,midway]{-2};
  %右边
  \draw [>->] (13.5,8)--(14.5,8) node[above,midway]{2};
  \draw [densely dotted][>-<] (13,7.5)--(13,6.5) node[right,midway]{1};
  \draw [>->] (13.5,6)--(14.5,6) node[below,midway]{3};
  \draw [<-<] (15,6.5)--(15,7.5) node[right,midway]{1};

  %****************************************************

  \draw[solid] (-15,4)--(-9,4);
  \draw[densely dotted] (-15,4)--(-15,2);
  \draw[solid] (-15,2)--(-9,2);
  \draw[solid] (-13,2)--(-13,4);
  \draw[densely dotted] (-11,4)--(-11,2);
  \draw[solid] (-9,2)--(-9,4);

  %加箭头
  %左边
  \draw [>->] (-14.5,4)--(-13.5,4) node[above,midway]{-1};
  \draw [densely dotted][<->] (-15,3.5)--(-15,2.5) node[right,midway]{0};
  \draw [>->] (-14.5,2)--(-13.5,2) node[below,midway]{2};
  \draw [>->] (-13,2.5)--(-13,3.5) node[right,midway]{0};
  %中间
  \draw [>->] (-12.5,4)--(-11.5,4) node[above,midway]{-1};
  \draw [>->] (-12.5,2)--(-11.5,2) node[below,midway]{2};
  %右边
  \draw [>->] (-10.5,4)--(-9.5,4) node[above,midway]{-1};
  \draw [densely dotted][>-<] (-11,3.5)--(-11,2.5) node[right,midway]{0};
  \draw [>->] (-10.5,2)--(-9.5,2) node[below,midway]{2};
  \draw [<-<] (-9,2.5)--(-9,3.5) node[right,midway]{0};

  \node at (-6,3){-};

    \draw[solid] (-3,4)--(3,4);
  \draw[densely dotted] (-3,4)--(-3,2);
  \draw[solid] (-3,2)--(-1,2);
  \draw[solid] (-1,2)--(-1,4);
  \draw[densely dotted] (1,4)--(1,2);
  \draw[solid] (1,2)--(3,2);
  \draw[solid] (3,2)--(3,4);

  %加箭头
  %左边的正方形
  \draw [>->] (-2.5,4)--(-1.5,4) node[above,midway]{1};
  \draw [densely dotted][<->] (-3,3.5)--(-3,2.5) node[right,midway]{1};
  \draw [>->] (-2.5,2)--(-1.5,2) node[below,midway]{1};
  \draw [>->] (-1,2.5)--(-1,3.5) node[right,midway]{1};
  %中间的桥
  \draw [>->] (-0.5,4)--(0.5,4) node[above,midway]{2};
  %右边的正方形
  \draw [>->] (1.5,4)--(2.5,4) node[above,midway]{1};
  \draw [densely dotted][>-<] (1,3.5)--(1,2.5) node[right,midway]{1};
  \draw [>->] (1.5,2)--(2.5,2) node[below,midway]{-1};
  \draw [<-<] (3,2.5)--(3,3.5) node[right,midway]{1};

    \node at (6,3){=};

    \draw[solid] (9,4)--(15,4);
  \draw[densely dotted] (9,4)--(9,2);
  \draw[solid] (9,2)--(15,2);
  \draw[solid] (11,2)--(11,4);
  \draw[densely dotted] (13,4)--(13,2);
  \draw[solid] (15,2)--(15,4);

  %加箭头
  %左边
  \draw [>->] (9.5,4)--(10.5,4) node[above,midway]{-2};
  \draw [densely dotted][<->] (9,3.5)--(9,2.5) node[right,midway]{-1};
  \draw [>->] (9.5,2)--(10.5,2) node[below,midway]{1};
  \draw [>->] (11,2.5)--(11,3.5) node[right,midway]{-1};
  %中间
  \draw [>->] (11.5,4)--(12.5,4) node[above,midway]{-3};
  \draw [>->] (11.5,2)--(12.5,2) node[below,midway]{2};
  %右边
  \draw [>->] (13.5,4)--(14.5,4) node[above,midway]{-2};
  \draw [densely dotted][>-<] (13,3.5)--(13,2.5) node[right,midway]{-1};
  \draw [>->] (13.5,2)--(14.5,2) node[below,midway]{3};
  \draw [<-<] (15,2.5)--(15,3.5) node[right,midway]{-1};

%**************************************************

  \draw[solid] (-15,0)--(-9,0);
  \draw[densely dotted] (-15,0)--(-15,-2);
  \draw[solid] (-15,-2)--(-9,-2);
  \draw[solid] (-13,-2)--(-13,0);
  \draw[densely dotted] (-11,0)--(-11,-2);
  \draw[solid] (-9,-2)--(-9,0);

  %加箭头
  %左边
  \draw [>->] (-14.5,0)--(-13.5,0) node[above,midway]{-1};
  \draw [densely dotted][<->] (-15,-0.5)--(-15,-1.5) node[right,midway]{0};
  \draw [>->] (-14.5,-2)--(-13.5,-2) node[below,midway]{-2};
  \draw [>->] (-13,-1.5)--(-13,-0.5) node[right,midway]{0};
  %中间
  \draw [>->] (-12.5,0)--(-11.5,0) node[above,midway]{-1};
  \draw [>->] (-12.5,-2)--(-11.5,-2) node[below,midway]{-2};
  %右边
  \draw [>->] (-10.5,0)--(-9.5,0) node[above,midway]{-1};
  \draw [densely dotted][>-<] (-11,-0.5)--(-11,-1.5) node[right,midway]{0};
  \draw [>->] (-10.5,-2)--(-9.5,-2) node[below,midway]{-2};
  \draw [<-<] (-9,-1.5)--(-9,-0.5) node[right,midway]{0};

  \node at (-6,-1){-};

    \draw[solid] (-3,0)--(3,0);
  \draw[densely dotted] (-3,0)--(-3,-2);
  \draw[solid] (-3,-2)--(-1,-2);
  \draw[solid] (-1,-2)--(-1,0);
  \draw[densely dotted] (1,0)--(1,-2);
  \draw[solid] (1,-2)--(3,-2);
  \draw[solid] (3,-2)--(3,0);

  %加箭头
  %左边的正方形
  \draw [>->] (-2.5,0)--(-1.5,0) node[above,midway]{1};
  \draw [densely dotted][<->] (-3,-0.5)--(-3,-1.5) node[right,midway]{1};
  \draw [>->] (-2.5,-2)--(-1.5,-2) node[below,midway]{1};
  \draw [>->] (-1,-1.5)--(-1,-0.5) node[right,midway]{1};
  %中间的桥
  \draw [>->] (-0.5,0)--(0.5,0) node[above,midway]{2};
  %右边的正方形
  \draw [>->] (1.5,0)--(2.5,0) node[above,midway]{1};
  \draw [densely dotted][>-<] (1,-0.5)--(1,-1.5) node[right,midway]{1};
  \draw [>->] (1.5,-2)--(2.5,-2) node[below,midway]{-1};
  \draw [<-<] (3,-1.5)--(3,-0.5) node[right,midway]{1};

    \node at (6,-1){=};

    \draw[solid] (9,0)--(15,0);
  \draw[densely dotted] (9,0)--(9,-2);
  \draw[solid] (9,-2)--(15,-2);
  \draw[solid] (11,-2)--(11,0);
  \draw[densely dotted] (13,0)--(13,-2);
  \draw[solid] (15,-2)--(15,0);

  %加箭头
  %左边
  \draw [>->] (9.5,0)--(10.5,0) node[above,midway]{-2};
  \draw [densely dotted][<->] (9,-0.5)--(9,-1.5) node[right,midway]{-1};
  \draw [>->] (9.5,-2)--(10.5,-2) node[below,midway]{-3};
  \draw [>->] (11,-1.5)--(11,-0.5) node[right,midway]{-1};
  %中间
  \draw [>->] (11.5,0)--(12.5,0) node[above,midway]{-3};
  \draw [>->] (11.5,-2)--(12.5,-2) node[below,midway]{-2};
  %右边
  \draw [>->] (13.5,0)--(14.5,0) node[above,midway]{-2};
  \draw [densely dotted][>-<] (13,-0.5)--(13,-1.5) node[right,midway]{-1};
  \draw [>->] (13.5,-2)--(14.5,-2) node[below,midway]{-1};
  \draw [<-<] (15,-1.5)--(15,-0.5) node[right,midway]{-1};
  \end{tikzpicture}
  \caption{A fragment of $f_{1}+f_{2}$ and $f_{1}-f_{2}$.}\label{f1+f2}
\end{figure}
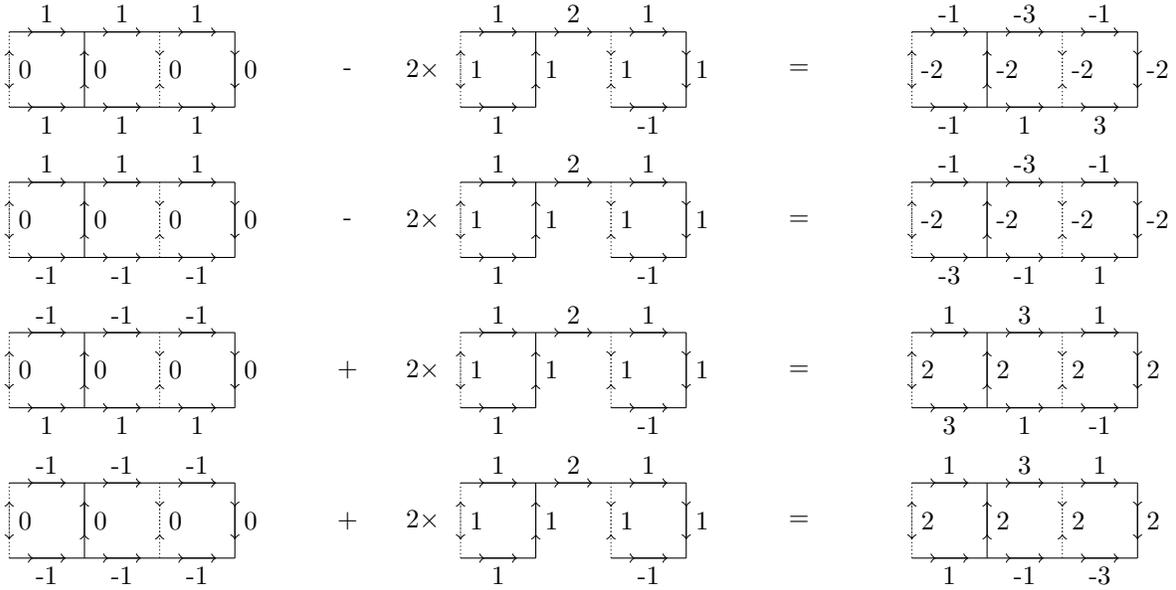
\begin{figure}[h]
  \centering
  \begin{tikzpicture}[scale=0.5]
    %\draw[step=1,help lines] ( -15,-3 ) grid ( 15, 3);

  \draw[solid] (-15,12)--(-9,12);
  \draw[densely dotted] (-15,12)--(-15,10);
  \draw[solid] (-15,10)--(-9,10);
  \draw[solid] (-13,10)--(-13,12);
  \draw[densely dotted] (-11,12)--(-11,10);
  \draw[solid] (-9,10)--(-9,12);

  %加箭头
  %左边
  \draw [>->] (-14.5,12)--(-13.5,12) node[above,midway]{1};
  \draw [densely dotted][<->] (-15,11.5)--(-15,10.5) node[right,midway]{0};
  \draw [>->] (-14.5,10)--(-13.5,10) node[below,midway]{1};
  \draw [>->] (-13,10.5)--(-13,11.5) node[right,midway]{0};
  %中间
  \draw [>->] (-12.5,12)--(-11.5,12) node[above,midway]{1};
  \draw [>->] (-12.5,10)--(-11.5,10) node[below,midway]{1};
  %右边
  \draw [>->] (-10.5,12)--(-9.5,12) node[above,midway]{1};
  \draw [densely dotted][>-<] (-11,11.5)--(-11,10.5) node[right,midway]{0};
  \draw [>->] (-10.5,10)--(-9.5,10) node[below,midway]{1};
  \draw [<-<] (-9,10.5)--(-9,11.5) node[right,midway]{0};

  \node at (-6,11){-};
  \node at (-4,11){2$\times$};

    \draw[solid] (-3,12)--(3,12);
  \draw[densely dotted] (-3,12)--(-3,10);
  \draw[solid] (-3,10)--(-1,10);
  \draw[solid] (-1,10)--(-1,12);
  \draw[densely dotted] (1,12)--(1,10);
  \draw[solid] (1,10)--(3,10);
  \draw[solid] (3,10)--(3,12);

  %加箭头
  %左边的正方形
  \draw [>->] (-2.5,12)--(-1.5,12) node[above,midway]{1};
  \draw [densely dotted][<->] (-3,11.5)--(-3,10.5) node[right,midway]{1};
  \draw [>->] (-2.5,10)--(-1.5,10) node[below,midway]{1};
  \draw [>->] (-1,10.5)--(-1,11.5) node[right,midway]{1};
  %中间的桥
  \draw [>->] (-0.5,12)--(0.5,12) node[above,midway]{2};
  %右边的正方形
  \draw [>->] (1.5,12)--(2.5,12) node[above,midway]{1};
  \draw [densely dotted][>-<] (1,11.5)--(1,10.5) node[right,midway]{1};
  \draw [>->] (1.5,10)--(2.5,10) node[below,midway]{-1};
  \draw [<-<] (3,10.5)--(3,11.5) node[right,midway]{1};

    \node at (6,11){=};

    \draw[solid] (9,12)--(15,12);
  \draw[densely dotted] (9,12)--(9,10);
  \draw[solid] (9,10)--(15,10);
  \draw[solid] (11,10)--(11,12);
  \draw[densely dotted] (13,12)--(13,10);
  \draw[solid] (15,10)--(15,12);

  %加箭头
  %左边
  \draw [>->] (9.5,12)--(10.5,12) node[above,midway]{-1};
  \draw [densely dotted][<->] (9,11.5)--(9,10.5) node[right,midway]{-2};
  \draw [>->] (9.5,10)--(10.5,10) node[below,midway]{-1};
  \draw [>->] (11,10.5)--(11,11.5) node[right,midway]{-2};
  %中间
  \draw [>->] (11.5,12)--(12.5,12) node[above,midway]{-3};
  \draw [>->] (11.5,10)--(12.5,10) node[below,midway]{1};
  %右边
  \draw [>->] (13.5,12)--(14.5,12) node[above,midway]{-1};
  \draw [densely dotted][>-<] (13,11.5)--(13,10.5) node[right,midway]{-2};
  \draw [>->] (13.5,10)--(14.5,10) node[below,midway]{3};
  \draw [<-<] (15,10.5)--(15,11.5) node[right,midway]{-2};
  %******************************************************

  \draw[solid] (-15,8)--(-9,8);
  \draw[densely dotted] (-15,8)--(-15,6);
  \draw[solid] (-15,6)--(-9,6);
  \draw[solid] (-13,6)--(-13,8);
  \draw[densely dotted] (-11,8)--(-11,6);
  \draw[solid] (-9,6)--(-9,8);

  %加箭头
  %左边
  \draw [>->] (-14.5,8)--(-13.5,8) node[above,midway]{1};
  \draw [densely dotted][<->] (-15,7.5)--(-15,6.5) node[right,midway]{0};
  \draw [>->] (-14.5,6)--(-13.5,6) node[below,midway]{-1};
  \draw [>->] (-13,6.5)--(-13,7.5) node[right,midway]{0};
  %中间
  \draw [>->] (-12.5,8)--(-11.5,8) node[above,midway]{1};
  \draw [>->] (-12.5,6)--(-11.5,6) node[below,midway]{-1};
  %右边
  \draw [>->] (-10.5,8)--(-9.5,8) node[above,midway]{1};
  \draw [densely dotted][>-<] (-11,7.5)--(-11,6.5) node[right,midway]{0};
  \draw [>->] (-10.5,6)--(-9.5,6) node[below,midway]{-1};
  \draw [<-<] (-9,6.5)--(-9,7.5) node[right,midway]{0};

  \node at (-6,7){-};
  \node at (-4,7){2$\times$};

    \draw[solid] (-3,8)--(3,8);
  \draw[densely dotted] (-3,8)--(-3,6);
  \draw[solid] (-3,6)--(-1,6);
  \draw[solid] (-1,6)--(-1,8);
  \draw[densely dotted] (1,8)--(1,6);
  \draw[solid] (1,6)--(3,6);
  \draw[solid] (3,6)--(3,8);

  %加箭头
  %左边的正方形
  \draw [>->] (-2.5,8)--(-1.5,8) node[above,midway]{1};
  \draw [densely dotted][<->] (-3,7.5)--(-3,6.5) node[right,midway]{1};
  \draw [>->] (-2.5,6)--(-1.5,6) node[below,midway]{1};
  \draw [>->] (-1,6.5)--(-1,7.5) node[right,midway]{1};
  %中间的桥
  \draw [>->] (-0.5,8)--(0.5,8) node[above,midway]{2};
  %右边的正方形
  \draw [>->] (1.5,8)--(2.5,8) node[above,midway]{1};
  \draw [densely dotted][>-<] (1,7.5)--(1,6.5) node[right,midway]{1};
  \draw [>->] (1.5,6)--(2.5,6) node[below,midway]{-1};
  \draw [<-<] (3,6.5)--(3,7.5) node[right,midway]{1};

    \node at (6,7){=};

    \draw[solid] (9,8)--(15,8);
  \draw[densely dotted] (9,8)--(9,6);
  \draw[solid] (9,6)--(15,6);
  \draw[solid] (11,6)--(11,8);
  \draw[densely dotted] (13,8)--(13,6);
  \draw[solid] (15,6)--(15,8);

  %加箭头
  %左边
  \draw [>->] (9.5,8)--(10.5,8) node[above,midway]{-1};
  \draw [densely dotted][<->] (9,7.5)--(9,6.5) node[right,midway]{-2};
  \draw [>->] (9.5,6)--(10.5,6) node[below,midway]{-3};
  \draw [>->] (11,6.5)--(11,7.5) node[right,midway]{-2};
  %中间
  \draw [>->] (11.5,8)--(12.5,8) node[above,midway]{-3};
  \draw [>->] (11.5,6)--(12.5,6) node[below,midway]{-1};
  %右边
  \draw [>->] (13.5,8)--(14.5,8) node[above,midway]{-1};
  \draw [densely dotted][>-<] (13,7.5)--(13,6.5) node[right,midway]{-2};
  \draw [>->] (13.5,6)--(14.5,6) node[below,midway]{1};
  \draw [<-<] (15,6.5)--(15,7.5) node[right,midway]{-2};

  %****************************************************

  \draw[solid] (-15,4)--(-9,4);
  \draw[densely dotted] (-15,4)--(-15,2);
  \draw[solid] (-15,2)--(-9,2);
  \draw[solid] (-13,2)--(-13,4);
  \draw[densely dotted] (-11,4)--(-11,2);
  \draw[solid] (-9,2)--(-9,4);

  %加箭头
  %左边
  \draw [>->] (-14.5,4)--(-13.5,4) node[above,midway]{-1};
  \draw [densely dotted][<->] (-15,3.5)--(-15,2.5) node[right,midway]{0};
  \draw [>->] (-14.5,2)--(-13.5,2) node[below,midway]{1};
  \draw [>->] (-13,2.5)--(-13,3.5) node[right,midway]{0};
  %中间
  \draw [>->] (-12.5,4)--(-11.5,4) node[above,midway]{-1};
  \draw [>->] (-12.5,2)--(-11.5,2) node[below,midway]{1};
  %右边
  \draw [>->] (-10.5,4)--(-9.5,4) node[above,midway]{-1};
  \draw [densely dotted][>-<] (-11,3.5)--(-11,2.5) node[right,midway]{0};
  \draw [>->] (-10.5,2)--(-9.5,2) node[below,midway]{1};
  \draw [<-<] (-9,2.5)--(-9,3.5) node[right,midway]{0};

  \node at (-6,3){+};
  \node at (-4,3){2$\times$};
    \draw[solid] (-3,4)--(3,4);
  \draw[densely dotted] (-3,4)--(-3,2);
  \draw[solid] (-3,2)--(-1,2);
  \draw[solid] (-1,2)--(-1,4);
  \draw[densely dotted] (1,4)--(1,2);
  \draw[solid] (1,2)--(3,2);
  \draw[solid] (3,2)--(3,4);

  %加箭头
  %左边的正方形
  \draw [>->] (-2.5,4)--(-1.5,4) node[above,midway]{1};
  \draw [densely dotted][<->] (-3,3.5)--(-3,2.5) node[right,midway]{1};
  \draw [>->] (-2.5,2)--(-1.5,2) node[below,midway]{1};
  \draw [>->] (-1,2.5)--(-1,3.5) node[right,midway]{1};
  %中间的桥
  \draw [>->] (-0.5,4)--(0.5,4) node[above,midway]{2};
  %右边的正方形
  \draw [>->] (1.5,4)--(2.5,4) node[above,midway]{1};
  \draw [densely dotted][>-<] (1,3.5)--(1,2.5) node[right,midway]{1};
  \draw [>->] (1.5,2)--(2.5,2) node[below,midway]{-1};
  \draw [<-<] (3,2.5)--(3,3.5) node[right,midway]{1};

    \node at (6,3){=};

    \draw[solid] (9,4)--(15,4);
  \draw[densely dotted] (9,4)--(9,2);
  \draw[solid] (9,2)--(15,2);
  \draw[solid] (11,2)--(11,4);
  \draw[densely dotted] (13,4)--(13,2);
  \draw[solid] (15,2)--(15,4);

  %加箭头
  %左边
  \draw [>->] (9.5,4)--(10.5,4) node[above,midway]{1};
  \draw [densely dotted][<->] (9,3.5)--(9,2.5) node[right,midway]{2};
  \draw [>->] (9.5,2)--(10.5,2) node[below,midway]{3};
  \draw [>->] (11,2.5)--(11,3.5) node[right,midway]{2};
  %中间
  \draw [>->] (11.5,4)--(12.5,4) node[above,midway]{3};
  \draw [>->] (11.5,2)--(12.5,2) node[below,midway]{1};
  %右边
  \draw [>->] (13.5,4)--(14.5,4) node[above,midway]{1};
  \draw [densely dotted][>-<] (13,3.5)--(13,2.5) node[right,midway]{2};
  \draw [>->] (13.5,2)--(14.5,2) node[below,midway]{-1};
  \draw [<-<] (15,2.5)--(15,3.5) node[right,midway]{2};

%**************************************************

  \draw[solid] (-15,0)--(-9,0);
  \draw[densely dotted] (-15,0)--(-15,-2);
  \draw[solid] (-15,-2)--(-9,-2);
  \draw[solid] (-13,-2)--(-13,0);
  \draw[densely dotted] (-11,0)--(-11,-2);
  \draw[solid] (-9,-2)--(-9,0);

  %加箭头
  %左边
  \draw [>->] (-14.5,0)--(-13.5,0) node[above,midway]{-1};
  \draw [densely dotted][<->] (-15,-0.5)--(-15,-1.5) node[right,midway]{0};
  \draw [>->] (-14.5,-2)--(-13.5,-2) node[below,midway]{-1};
  \draw [>->] (-13,-1.5)--(-13,-0.5) node[right,midway]{0};
  %中间
  \draw [>->] (-12.5,0)--(-11.5,0) node[above,midway]{-1};
  \draw [>->] (-12.5,-2)--(-11.5,-2) node[below,midway]{-1};
  %右边
  \draw [>->] (-10.5,0)--(-9.5,0) node[above,midway]{-1};
  \draw [densely dotted][>-<] (-11,-0.5)--(-11,-1.5) node[right,midway]{0};
  \draw [>->] (-10.5,-2)--(-9.5,-2) node[below,midway]{-1};
  \draw [<-<] (-9,-1.5)--(-9,-0.5) node[right,midway]{0};

  \node at (-6,-1){+};
  \node at (-4,-1){2$\times$};

    \draw[solid] (-3,0)--(3,0);
  \draw[densely dotted] (-3,0)--(-3,-2);
  \draw[solid] (-3,-2)--(-1,-2);
  \draw[solid] (-1,-2)--(-1,0);
  \draw[densely dotted] (1,0)--(1,-2);
  \draw[solid] (1,-2)--(3,-2);
  \draw[solid] (3,-2)--(3,0);

  %加箭头
  %左边的正方形
  \draw [>->] (-2.5,0)--(-1.5,0) node[above,midway]{1};
  \draw [densely dotted][<->] (-3,-0.5)--(-3,-1.5) node[right,midway]{1};
  \draw [>->] (-2.5,-2)--(-1.5,-2) node[below,midway]{1};
  \draw [>->] (-1,-1.5)--(-1,-0.5) node[right,midway]{1};
  %中间的桥
  \draw [>->] (-0.5,0)--(0.5,0) node[above,midway]{2};
  %右边的正方形
  \draw [>->] (1.5,0)--(2.5,0) node[above,midway]{1};
  \draw [densely dotted][>-<] (1,-0.5)--(1,-1.5) node[right,midway]{1};
  \draw [>->] (1.5,-2)--(2.5,-2) node[below,midway]{-1};
  \draw [<-<] (3,-1.5)--(3,-0.5) node[right,midway]{1};

    \node at (6,-1){=};

    \draw[solid] (9,0)--(15,0);
  \draw[densely dotted] (9,0)--(9,-2);
  \draw[solid] (9,-2)--(15,-2);
  \draw[solid] (11,-2)--(11,0);
  \draw[densely dotted] (13,0)--(13,-2);
  \draw[solid] (15,-2)--(15,0);

  %加箭头
  %左边
  \draw [>->] (9.5,0)--(10.5,0) node[above,midway]{1};
  \draw [densely dotted][<->] (9,-0.5)--(9,-1.5) node[right,midway]{2};
  \draw [>->] (9.5,-2)--(10.5,-2) node[below,midway]{1};
  \draw [>->] (11,-1.5)--(11,-0.5) node[right,midway]{2};
  %中间
  \draw [>->] (11.5,0)--(12.5,0) node[above,midway]{3};
  \draw [>->] (11.5,-2)--(12.5,-2) node[below,midway]{-1};
  %右边
  \draw [>->] (13.5,0)--(14.5,0) node[above,midway]{1};
  \draw [densely dotted][>-<] (13,-0.5)--(13,-1.5) node[right,midway]{2};
  \draw [>->] (13.5,-2)--(14.5,-2) node[below,midway]{-3};
  \draw [<-<] (15,-1.5)--(15,-0.5) node[right,midway]{2};
  \end{tikzpicture}
  \caption{A fragment of $f_{1}+2f_{2}$ and $f_{1}-2f_{2}$.}\label{f1+2f2}
\end{figure}
    \end{proof}

    The following theorem shows that every flow-admissible signed circular ladder admits a $6$-NZF.

    \begin{thm}\label{C-6}
      Every flow-admissible $(CL_{n},\sigma)$ admits a $6$-NZF.
   \end{thm}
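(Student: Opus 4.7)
The plan is to follow the strategy used in Theorem~\ref{M-6}: try to find a balanced Hamiltonian circuit and invoke Theorem~\ref{BH-6-flow}, and resolve the remaining configurations by explicit flow construction combined with the extender reduction of Lemma~\ref{e-f}. First I would normalise $(CL_n,\sigma)$ by switching. Since $\sigma(C_x)$ and $\sigma(C_y)$ are switching invariants, whenever $\sigma(C_x)=+1$ we can switch at a suitable subset of $\{x_0,\dots,x_{n-1}\}$ to make every edge of $C_x$ positive, and symmetrically for $C_y$; this may, of course, change the signs of the rungs $x_iy_i$, which I now view as the free parameters $a_i := \sigma(x_iy_i)$.

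Second, for each $i\in\mathbb{Z}_n$, consider the Hamiltonian circuit
$$H_i=(C_x\setminus\{x_ix_{i+1}\})\cup(C_y\setminus\{y_iy_{i+1}\})\cup\{x_iy_i,\,x_{i+1}y_{i+1}\},$$
whose sign equals
$$\sigma(H_i)=\sigma(C_x)\sigma(C_y)\sigma(x_ix_{i+1})\sigma(y_iy_{i+1})\,a_i a_{i+1}.$$
If some $\sigma(H_i)=+1$, then $(CL_n,\sigma)$ contains a balanced Hamiltonian circuit and Theorem~\ref{BH-6-flow} gives the desired 6-NZF. I would split on the four combinations of $(\sigma(C_x),\sigma(C_y))\in\{\pm 1\}^2$; after normalisation these reduce to conditions on the sequence $(a_0,a_1,\dots,a_{n-1})$. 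In each case, unless the signs of consecutive rungs alternate in a specific pattern, a balanced $H_i$ can be produced, mirroring the argument used for $ML_n$.

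Third, I would isolate the residual (bad) configurations, where every $H_i$ is unbalanced. A direct computation shows that in these configurations the rung signs, along with the residual signs on $C_x$ and $C_y$, must form a periodic pattern which --- when positive edges dominate on both $C_x$ and $C_y$ --- exactly matches the structure produced by applying $(4q,i)$-extenders to a much smaller signed circular ladder $(CL_m,\sigma')$. Lemma~\ref{e-f} then lifts any 6-NZF on $(CL_m,\sigma')$ with the right flow values on $x_ix_{i+1}$ and $y_iy_{i+1}$ to the full $(CL_n,\sigma)$. The base of this reduction is a finite list of small signed circular ladders (roughly $n\le 4$) together with those where $\sigma(C_x)\sigma(C_y)=-1$; in each base case I would exhibit an explicit 6-NZF by hand, designed so its restriction to the designated rung-pair has the prescribed $(\pm 1,\pm 1)$ or $(\pm 1,\pm 2)$ values needed in Lemma~\ref{e-f}.

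The principal obstacle is the accounting in the third step: checking that every flow-admissible signed circular ladder failing to contain a balanced Hamiltonian circuit can be realised as an iterated extender of a short list of base configurations, and engineering explicit 6-NZFs on those base configurations whose flow values on the critical rungs satisfy the hypotheses of Lemma~\ref{e-f} so that the extension propagates. Flow-admissibility must also be tracked through the reduction, since Lemma~\ref{e-f} presupposes that the smaller signed graph admits a $k$-NZF; ensuring the reduced configuration remains flow-admissible (using the characterisation in Proposition~\ref{equi-flow-admissible}) will require separate bookkeeping in the cases where $|E_N|$ is small.
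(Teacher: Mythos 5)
Your proposal follows essentially the same route as the paper: a case split on $(\sigma(C_{x}),\sigma(C_{y}))$, a search for a balanced Hamiltonian circuit of the form $H_{i}$ so as to invoke Theorem~\ref{BH-6-flow}, and a reduction of the residual alternating-rung configurations (which force $n$ even) via the extender Lemma~\ref{e-f} to explicit flows on small base ladders. The only substantive content you have not supplied is the finite list of base cases itself --- the paper exhibits concrete $4$- and $6$-NZFs on $(CL_{2},\sigma)$, $(CL_{4},\sigma)$ and $(CL_{6},\sigma)$ whose values on a designated rung pair are $(\pm 1,\pm 2)$ or $(\pm 1,\pm 1)$ as Lemma~\ref{e-f} requires --- together with the observation that when $\sigma(C_{x})\neq\sigma(C_{y})$ a balanced Hamiltonian circuit always exists, so no base case is needed there.
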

   \begin{proof}

     We consider three cases based on the sign of $\sigma(C_{x})$ and $\sigma(C_{y})$.\\

     \textbf{Case 1.}
     $\sigma(C_{x})=\sigma(C_{y})=+1$.

     Without loss of generality, we assume that $(C_{x},\sigma)$ and $(C_{y},\sigma)$ are all-positive; otherwise we switch at some vertex in $V(C_{x})\cup V(C_{y})$ to ensure that every edge in $C_{x}$ and $C_{y}$ is positive.
     Since $(CL_{n},\sigma)$ is flow-admissible, it follows that $n\geq2$.\\

      \textbf{Subcase 1.1.} There exists an $i\in [0,n-1]$ such that $\sigma(x_{i}y_{i})=\sigma(x_{i+1}y_{i+1})$ modulo $n$.

      Assume that $\sigma(x_{i}y_{i})=\sigma(x_{i+1}y_{i+1})=+1$; otherwise, we switch at $V(C_{x})$.
      Then, $(C_{x}\cup C_{y}\cup\{x_{i}y_{i}, x_{i+1}y_{i+1}\})\setminus \{x_{i}x_{i+1}, y_{i}y_{i+1}\}$ forms an all-positive Hamiltonian circuit.
      By Theorem \ref{BH-6-flow}, we conclude that $(CL_{n},\sigma)$ admits a $6$-NZF.\\

      \textbf{Subcase 1.2.} There is no $i\in [0,n-1]$ such that $\sigma(x_{i}y_{i})=\sigma(x_{i+1}y_{i+1})$ modulo $n$.

      For any $i\in [0,n-1]$, we have $\sigma(x_{i}y_{i})\neq\sigma(x_{i+1}y_{i+1})$.
      Thus, $n$ is even; otherwise, there exists a $j\in [0,n-1]$ such that $\sigma(x_{j}y_{j})=\sigma(x_{j+1}y_{j+1})$ modulo $n$.
      Because $(CL_{n},\sigma)$ is flow-admissible, it follows that  $n\geq 4$.
      Otherwise, there would be only one negative edge, a contradiction.
      Assume that $\sigma(x_{0}y_{0})=+1$.
      Otherwise, perform a switching at $V(C_{1})$.

      \begin{figure}[h]

  \begin{minipage}[b]{.5\textwidth}
  \centering
  \begin{tikzpicture}[scale=0.8]
    %\draw[step=1,help lines] ( -3,-3 ) grid ( 3, 3);

    %画线

    %外面的大正方形
    \draw[solid] (-2,2)--(2,2) ;
    \draw [<-] (-1,2)--(0,2)node[above]{2} ;
    \draw [<-] (1,2)--(2,2) ;% 上

    \draw[solid] (-2,2)--(-2,-2) ;
    \draw [>-] (-2,1)--(-2,0) node[left]{1} ;
    \draw [>-] (-2,-1)--(-2,-2) ;% 左

    \draw[solid] (-2,-2)--(2,-2);
    \draw [>-] (-1,-2)--(0,-2)node[below]{2};
    \draw [>-] (1,-2)--(2,-2);%下

    \draw[solid] (2,2)--(2,-2);
    \draw [<-] (2,1)--(2,0)node[right]{3} ;
    \draw [<-] (2,-1)--(2,-2);%右

    %中间的小正方形
    \draw[solid] (-1,1)--(1,1);
    \draw [<-] (-0.5,1)--(0,1) node[above]{1} ;
    \draw [<-] (0.5,1)--(1,1);%上

    \draw[solid] (-1,1)--(-1,-1)  ;
    \draw [>-] (-1,0.5)--(-1,0)node[left]{2} ;
    \draw [>-] (-1,-0.5)--(-1,-1);%左

    \draw[solid] (-1,-1)--(1,-1) ;
    \draw [>-] (-0.5,-1)--(0,-1) node[below]{3};
    \draw [>-] (0.5,-1)--(1,-1);% 下

    \draw[solid] (1,1)--(1,-1);
    \draw [<-] (1,0.5)--(1,0)node[right]{2};
    \draw [<-] (1,-0.5)--(1,-1);% 右

    %中间的匹配边
    \draw [->] (-2,2)--(-1.75,1.75) ;
    \draw [solid](-2,2)--(-1,1) node[left,midway]{1};
    \draw [->] (-1.5,1.5)--(-1.25,1.25) ;%左上

     \draw [densely dotted] (-2,-2)--(-1,-1);
    \draw [densely dotted][<->] (-1.75,-1.75)--(-1.25,-1.25)node[left]{1} ;% 左下

    \draw [-<] (2,-2)--(1.75,-1.75);
    \draw [solid](2,-2)--(1,-1) node[left,midway]{1} ;
    \draw [-<] (1.5,-1.5)--(1.25,-1.25);% 右下

    \draw [densely dotted]  (2,2)--(1,1);
    \draw [densely dotted] [>-<] (1.75,1.75)--(1.25,1.25)node[right]{1};% 右上

  %下面画点
   \fill (-2,2) circle (.1);
   \node at (-2,2)[above]{$y_{0}$};
   \fill (-2,-2) circle (.1);
   %\node at (-4,4)[left]{A};
   \fill (2,-2) circle (.1);
   %\node at (-4,4)[left]{A};
   \fill (2,2) circle (.1);
   \node at (2,2)[above]{$y_{1}$};

    \fill (-1,1) circle (.1);
   \node at (-0.75,0.75){$x_{0}$};
   \fill (-1,-1) circle (.1);
   %\node at (-4,4)[left]{A};
   \fill (1,-1) circle (.1);
   %\node at (-4,4)[left]{A};
   \fill (1,1) circle (.1);
   \node at (0.75,0.75){$x_{1}$};

  \end{tikzpicture}
  \caption{A $4$-NZF $f_{1}$ on $(CL_{4},\sigma_{1})$.}\label{CL4}
  \end{minipage}%
  \begin{minipage}[b]{.5\textwidth}
  \centering
  \begin{tikzpicture}[scale=0.4]
    %\draw[step=1,help lines] ( -5,-5 ) grid ( 5, 5);%画线

    %外面的大六边形
    \draw[solid] (5,0)--(3,4) ;
    \draw [<-<] (3.5,3)--(4.5,1)node[right,above,midway]{3};

    \draw[solid] (3,4)--(-3,4) ;
    \draw [<-<] (-1.5,4)--(1.5,4) node[above,midway]{1} ;

    \draw[solid] (-3,4)--(-5,0);
    \draw [<-<] (-3.5,3)--(-4.5,1)node[left,midway]{2};

    \draw[solid] (-5,0)--(-3,-4);
    \draw [>->] (-3.5,-3)--(-4.5,-1)node[left,midway]{1} ;

    \draw[solid] (-3,-4)--(3,-4);
    \draw [>->] (-1.5,-4)--(1.5,-4)node[midway,below]{1} ;

    \draw[solid] (3,-4)--(5,0);
    \draw [>->] (3.5,-3)--(4.5,-1)node[midway,right]{2} ;

    %中间的小六边形
    \draw[solid] (3,0)--(1.5,2.5);
    \draw [>->] (1.875,1.875)--(2.625,0.625) node[midway,right]{1} ;

    \draw[solid] (1.5,2.5)--(-1.5,2.5)  ;
    \draw [<-<] (-0.75,2.5)--(0.75,2.5)node[midway,above]{1} ;

    \draw[solid] (-1.5,2.5)--(-3,0) ;
    \draw [<-<] (-1.875,1.875)--(-2.625,0.625) node[midway,left]{2};

    \draw[solid] (-3,0)--(-1.5,-2.5);
    \draw [>->] (-1.875,-1.875)--(-2.625,-0.625)node[midway,left]{3};

    \draw[solid] (-1.5,-2.5)--(1.5,-2.5);
    \draw [<-<] (-0.75,-2.5)--(0.75,-2.5)node[midway,below]{1};

    \draw[solid] (1.5,-2.5)--(3,0);
    \draw [<-<] (1.875,-1.875)--(2.625,-0.625)node[midway,right]{2};

    %中间的匹配边
    %\draw [->] (-2,2)--(-1.75,1.75) ;
    \draw [densely dotted](3,0)--(5,0) ;
    \draw [densely dotted][<->] (3.5,0)--(4.5,0)node[midway,below]{1};

     \draw[solid] (1.5,2.5)--(3,4);
     \draw [<-<] (1.875,2.875)--(2.625,3.625)node[midway,right]{2};

    \draw [densely dotted](-1.5,2.5)--(-3,4) ;
    \draw [densely dotted][>-<] (-1.875,2.875)--(-2.625,3.625)node[midway,right]{3};

     \draw[solid] (-3,0)--(-5,0);
    \draw [>->] (-3.5,0)--(-4.5,0)node[midway,above]{1};

    \draw[densely dotted] (-1.5,-2.5)--(-3,-4);
    \draw [densely dotted][<->] (-1.875,-2.875)--(-2.625,-3.625)node[midway,left]{2};

     \draw[solid] (1.5,-2.5)--(3,-4);
    \draw [>->] (1.875,-2.875)--(2.625,-3.625)node[midway,left]{1};

  %下面画点
   \fill (3,0) circle (.1);
   %\node at (-4,4)[left]{A};
   \fill (5,0) circle (.1);
   %\node at (-4,4)[left]{A};
   \fill (3,4) circle (.1);
   \node at (3,4)[above]{$y_{0}$};
   \fill (1.5,2.5) circle (.1);
   \node at (1.2,2){$x_{0}$};
   \fill (1.5,-2.5) circle (.1);
   %\node at (-4,4)[left]{A};
   \fill (3,-4) circle (.1);
   %\node at (3,-4)[left]{A};

   \fill (-3,0) circle (.1);
   %\node at (-4,4)[left]{A};
   \fill (-5,0) circle (.1);
   %\node at (-4,4)[left]{A};
   \fill (-3,4) circle (.1);
   \node at (-3,4)[above]{$y_{1}$};
   \fill (-1.5,2.5) circle (.1);
   \node at (-1,2){$x_{1}$};
   \fill (-1.5,-2.5) circle (.1);
   %\node at (-4,4)[left]{A};
   \fill (-3,-4) circle (.1);
   %\node at (-4,4)[left]{A};

  \end{tikzpicture}
     \caption{A $4$-NZF $f_{2}$ on $(CL_{6},\sigma_{2})$.}\label{CL6}
  \end{minipage}%
\end{figure}

      We claim that $(CL_{4k},\sigma)$ admits a $4$-NZF, where $k\geq 1$ is an integer.
      We consider a signed circular ladder $(CL_{4},\sigma_{1})$ which is isomorphic to $(CL_{4k},\sigma)$ if $k=1$.
      Fig. \ref{CL4} shows that $(CL_{4},\sigma_{1})$ admits a $4$-NZF $f_{1}$.
      Note that $\sigma_{1}(x_{0}x_{1})=\sigma_{1}(y_{0}y_{1})=+1$, $f_{1}(x_{0}x_{1})=\pm1$ and $f_{1}(y_{0}y_{1})=\pm2$.
      Therefore, the $(4(k-1),0)$-extender of $(CL_{4},\sigma_{1})$ admits a $4$-NZF, by Lemma \ref{e-f}.
      Note that, $(CL_{4k},\sigma)$ is isomorphic to the $(4(k-1),0)$-extender of $(CL_{4},\sigma_{1})$.
      Thus, $(CL_{4k},\sigma)$ admits a $4$-NZF.

      We claim that $(CL_{4k+2},\sigma)$ admits a $4$-NZF, where $k\geq 1$ is an integer.
      Consider a signed circular ladder $(CL_{6},\sigma_{2})$, as shown in Fig. \ref{CL6}.
      Additively, $(CL_{4k+2},\sigma)$ is isomorphic to the $(4(k-1),0)$-extender of $(CL_{6},\sigma_{2})$.
      Fig. \ref{CL6} shows that $(CL_{6},\sigma_{2})$ admits a $4$-NZF $f_{2}$ that satisfies the conditions of Lemma \ref{e-f}.
      Therefore, the $(4(k-1),0)$-extender of $(CL_{6},\sigma_{2})$ admits a $4$-NZF, and so does $(CL_{4k+2},\sigma)$.\\

     \textbf{Case 2.} $\sigma(C_{x})\neq\sigma(C_{y})$.

     Without loss of generality, assume that $\sigma(C_{x})=-1$ and $\sigma(C_{y})=+1$.
     Suppose that $(C_{x},\sigma)$ has only one negative edge, say $x_{0}x_{1}$, and $(C_{y},\sigma)$ is all-positive.
     Since $(CL_{n},\sigma)$ is flow-admissible, it follows that $n\geq2$.
     We shall consider two subcases with respect to the signs of $\sigma(x_{0}y_{0})$ and $\sigma(x_{1}y_{1})$.\\

     \textbf{Subcase 2.1.} $\sigma(x_{0}y_{0})=\sigma(x_{1}y_{1})$.

     Assume that $x_{0}y_{0}$ and $x_{1}y_{1}$ are positive.
     Otherwise, we switch at $V(C_{x})$.
     Then, there exists an all-positive Hamiltonian circuit $(C_{x}\cup C_{y}\cup\{x_{0}y_{0}, x_{1}y_{1}\})\setminus \{x_{0}x_{1}, y_{0}y_{1}\}$.
     By Theorem \ref{BH-6-flow}, $(CL_{n},\sigma)$ admits a $6$-NZF.\\

     \textbf{Subcase 2.2.} $\sigma(x_{0}y_{0})\neq\sigma(x_{1}y_{1})$.

     Without loss of generality, assume that $\sigma(x_{0}y_{0})=-1$ and $\sigma(x_{1}y_{1})=+1$.
     We claim that there is another negative edge in $M\setminus \{x_{0}y_{0}, x_{1}y_{1}\}$.
     Otherwise, we switch at $x_{0}$ such that $E(CL_{n})$ has only one negative edge, leading to a contradiction.
     Hence, there exists an $i\in [2,n-1]$ such that $\sigma(x_{i}y_{i})\neq\sigma(x_{i-1}y_{i-1})$.
     Without loss of generality, assume $\sigma(x_{i}y_{i})=-1$ and $\sigma(x_{i-1}y_{i-1})=+1$.
     Then we switch at $\{x_{0},x_{n-1},x_{n-2}\cdots x_{i} \}$, and denote the resulting signed graph by $(CL_{n},\sigma')$.
     In $(CL_{n},\sigma')$, $C_{x}$ has only one negative edge $x_{i}x_{i-1}$, $C_{y}$ remains all-positive, and $\sigma'(x_{i}y_{i})=\sigma'(x_{i-1}y_{i-1})=+1$.
     Then there exists an all-positive Hamiltonian circuit $(C_{x}\cup C_{y}\cup\{x_{i}y_{i}, x_{i-1}y_{i-1}\})\setminus \{x_{i}x_{i-1}, y_{i}y_{i-1}\}$ in $(CL_{n},\sigma')$.
     By Theorem \ref{BH-6-flow}, $(CL_{n},\sigma')$ admits a $6$-NZF, and so does $(CL_{n},\sigma)$.\\

     \textbf{Case 3.} $\sigma(C_{x})=\sigma(C_{y})=-1$.

     Without loss of generality, suppose that $(C_{x},\sigma)$ has only one negative edge $x_{0}x_{1}$ and $(C_{y},\sigma)$ has only one negative edge $y_{0}y_{1}$.
     Since $(CL_{n},\sigma)$ is flow-admissible, it follows that $n\geq1$.
     If $n=1$, then $(CL_{n},\sigma)$ is isomorphic to a long barbell.
     Thus, $(CL_{n},\sigma)$ admits a $3$-NZF if $n=1$.
     Now, consider $n\geq2$.\\

      \textbf{Subcase 3.1.} There exists an $i \in [0, n-1]$ such that $\sigma(x_{i}y_{i})=\sigma(x_{i+1}y_{i+1})$ modulo $n$.

      Assume that $\sigma(x_{i}y_{i})=\sigma(x_{i+1}y_{i+1})=+1$; otherwise, perform a switching at $V(C_{x})$.
      Define $H=x_{i+1}x_{i+2}\cdots x_{n-1}x_{0}x_{1}\cdots x_{i}y_{i}y_{i-1}\cdots y_{0}y_{n-1} y_{n-2}\cdots y_{i+1}x_{i+1}$.
      It is easy to verify that $H$ forms a Hamiltonian circuit of $CL_{n}$.
      Additionally, there are only two negative edges $x_{0}x_{1}$ and $y_{0}y_{1}$, in $(H,\sigma)$.
      Thus, $(H,\sigma)$ is a balanced Hamiltonian circuit in $CL_{n}$.
      By Theorem \ref{BH-6-flow}, $(CL_{n},\sigma)$ admits a $6$-NZF.\\

      \textbf{Subcase 3.2.} There is no $i \in [0, n-1]$ such that $\sigma(x_{i}y_{i})\neq\sigma(x_{i+1}y_{i+1})$ modulo $n$.

      It is evident that $n$ is even.
      Assume that $\sigma(x_{1}y_{1})=+1$;
      otherwise, perform a switching at $V(C_{1})$.

      We claim that $(CL_{4k},\sigma)$ admits a $6$-NZF, where $k\geq 1$ is an integer.
      Fig. \ref{-CL4} shows that the signed circular ladder $(CL_{4},\sigma_{3})$ admits a $6$-NZF $f_{3}$ that satisfies the conditions of Lemma \ref{e-f}.
      Therefore, the $(4(k-1),3)$-extender of $(CL_{4},\sigma_{3})$ admits a $6$-NZF.
      Additionally, $(CL_{4k},\sigma)$ is isomorphic to the $(4(k-1),3)$-extender of $(CL_{4},\sigma_{3})$.
      Thus, $(CL_{4k},\sigma)$ admits a $6$-NZF.

   \begin{figure}[h]
  \centering
  \begin{minipage}[b]{.5\textwidth}
  \centering
  \begin{tikzpicture}[scale=0.7]
    %\draw[step=1,help lines] ( -3,-3 ) grid ( 3, 3);

    %画线

    %外面的大正方形
    \draw[densely dotted] (-2,2)--(2,2) ;
    \draw [densely dotted][<->] (-1,2)--(1,2)node[midway,above]{1} ;%上

    \draw[solid] (-2,2)--(-2,-2) ;
    \draw [>->] (-2,1)--(-2,-1) node[midway,left]{2};%左

    \draw[solid] (-2,-2)--(2,-2);
    \draw [>->] (-1,-2)--(1,-2)node[midway,below]{2};% 下

    \draw[solid] (2,2)--(2,-2);
    \draw [>->] (2,1)--(2,-1)node[midway,right]{2};% 右

    %中间的小正方形
    \draw[densely dotted] (-1,1)--(1,1);
    \draw [densely dotted][>-<] (-0.5,1)--(0.5,1) node[midway,above]{2};%上

    \draw[solid] (-1,1)--(-1,-1);
    \draw [<-<] (-1,0.5)--(-1,-0.5)node[midway,left]{3};% 左

    \draw[solid] (-1,-1)--(1,-1) ;
    \draw [<-<] (-0.5,-1)--(0.5,-1) node[midway,below]{5};% 下

    \draw[solid] (1,1)--(1,-1);
    \draw [>->] (1,0.5)--(1,-0.5)node[midway,right]{1};% 右

    %中间的匹配边
    \draw [solid](-2,2)--(-1,1) node[left,midway]{1};
    \draw [<-<] (-1.75,1.75)--(-1.25,1.25) ;% 左上

     \draw[densely dotted] (-2,-2)--(-1,-1);
    \draw [densely dotted][>-<] (-1.75,-1.75)--(-1.25,-1.25)node[left]{2} ;% 左下

    \draw [solid](2,-2)--(1,-1) node[left,midway]{4} ;
    \draw [>->] (1.75,-1.75)--(1.25,-1.25);% 右下

     \draw[densely dotted] (2,2)--(1,1);
    \draw [densely dotted][<->] (1.75,1.75)--(1.25,1.25)node[right]{1};% 右上

  %下面画点
   \fill (-2,2) circle (.1);
   \node at (-2,2)[left]{$y_{1}$};
   \fill (-2,-2) circle (.1);
   %\node at (-2,-2)[left]{A};
   \fill (2,-2) circle (.1);
   \node at (2,-2)[right]{$y_{3}$};
   \fill (2,2) circle (.1);
   \node at (2,2)[right]{$y_{0}$};

    \fill (-1,1) circle (.1);
   \node at (-1,0.7)[right]{$x_{1}$};
   \fill (-1,-1) circle (.1);
   %\node at (-4,4)[left]{A};
   \fill (1,-1) circle (.1);
   \node at (1,-0.7)[left]{$x_{3}$};
   \fill (1,1) circle (.1);
   \node at (1,0.7)[left]{$x_{0}$};

  \end{tikzpicture}
   \caption{A $6$-NZF $f_{3}$ on $(CL_{4},\sigma_{3})$.}\label{-CL4}
   \end{minipage}%
   \begin{minipage}[b]{.5\textwidth}
   \centering
    \begin{tikzpicture}[scale=0.5]
    %\draw[step=1,help lines] ( -3,-3 ) grid ( 3, 3);
    %画线

    \draw [densely dotted](-3,0) arc (180:225:3);
    \draw [densely dotted](3,0) arc (0:-45:3);
    \draw (3,0) arc (0:45:3);
    \draw (-3,0) arc (180:133:3);
    \draw [densely dotted](1.5,0) arc (0:-180:1.5);
    \draw (1.5,0) arc (0:180:1.5);

    \draw[<-<] (2.12,2.12) arc (45:135:3);
    \draw (0,3)--(0,3)node[midway,above]{2};
    \draw[densely dotted][<->] (2.12,-2.12) arc (-45:-135:3);
    \draw (0,-3)--(0,-3)node[midway,above]{1};

    \draw[<-<](1.06,1.06) arc (45:135:1.5);
    \draw (0,1.5)--(0,1.5)node[midway,above]{1};
    \draw[densely dotted][<->] (1.06,-1.06) arc (-45:-135:1.5);
    \draw (0,-1.5)--(0,-1.5)node[midway,above]{2};

    \draw [densely dotted](3,0)--(1.5,0)node[midway,above]{3};
    \draw [densely dotted][>-<] (1.875,0)--(2.625,0);
    \draw (-3,0)--(-1.5,0)node[midway,above]{1};
    \draw [>->] (-1.875,0)--(-2.625,0);

    %下面画点
   \fill (3,0) circle (.1);
   %\node at (-4,4)[left]{A};
   \fill (-3,0) circle (.1);
   \fill (1.5,0) circle (.1);
   %\node at (-4,4)[left]{A};
   \fill (-1.5,0) circle (.1);

     \end{tikzpicture}
      \caption{A $4$-NZF on $(CL_{2},\sigma)$.}\label{CL2}
      \end{minipage}
      \begin{minipage}[b]{.5\textwidth}
  \centering
  \begin{tikzpicture}[scale=0.4]
    %\draw[step=1,help lines] ( -5,-5 ) grid ( 5, 5);%画线

    %外面的大六边形
    \draw[solid] (5,0)--(3,4) ;
    \draw [>->] (3.5,3)--(4.5,1)node[right,above,midway]{1};

    \draw[solid] (3,4)--(-3,4) ;
    \draw [>->] (-1.5,4)--(1.5,4) node[above,midway]{2} ;

    \draw[solid] (-3,4)--(-5,0);
    \draw [<-<] (-3.5,3)--(-4.5,1)node[left,midway]{3};

    \draw[solid] (-5,0)--(-3,-4);
    \draw [>->] (-3.5,-3)--(-4.5,-1)node[left,midway]{2} ;

    \draw[densely dotted] (-3,-4)--(3,-4);
    \draw [densely dotted][<->] (-1.5,-4)--(1.5,-4)node[midway,below]{1} ;

    \draw[solid] (3,-4)--(5,0);
    \draw [<-<] (3.5,-3)--(4.5,-1)node[midway,right]{2} ;

    %中间的小六边形
    \draw[solid] (3,0)--(1.5,2.5);
    \draw [>->] (1.875,1.875)--(2.625,0.625) node[midway,right]{2} ;

    \draw[solid] (1.5,2.5)--(-1.5,2.5)  ;
    \draw [>->] (-0.75,2.5)--(0.75,2.5)node[midway,above]{3} ;

    \draw[solid] (-1.5,2.5)--(-3,0) ;
    \draw [<-<] (-1.875,1.875)--(-2.625,0.625) node[midway,left]{2};

    \draw[solid] (-3,0)--(-1.5,-2.5);
    \draw [>->] (-1.875,-1.875)--(-2.625,-0.625)node[midway,left]{1};

    \draw [densely dotted] (-1.5,-2.5)--(1.5,-2.5);
    \draw [densely dotted][<->] (-0.75,-2.5)--(0.75,-2.5)node[midway,below]{2};

    \draw[solid] (1.5,-2.5)--(3,0);
    \draw [<-<] (1.875,-1.875)--(2.625,-0.625)node[midway,right]{1};

    %中间的匹配边
    %\draw [->] (-2,2)--(-1.75,1.75) ;
    \draw [solid](3,0)--(5,0) ;
    \draw [solid][>->] (3.5,0)--(4.5,0)node[midway,below]{1};

     \draw[densely dotted] (1.5,2.5)--(3,4);
     \draw [densely dotted][>-<] (1.875,2.875)--(2.625,3.625)node[midway,right]{1};

    \draw [solid](-1.5,2.5)--(-3,4) ;
    \draw [solid][>-<] (-1.875,2.875)--(-2.625,3.625)node[midway,right]{1};

     \draw[densely dotted] (-3,0)--(-5,0);
    \draw [densely dotted][<->] (-3.5,0)--(-4.5,0)node[midway,above]{1};

    \draw[solid] (-1.5,-2.5)--(-3,-4);
    \draw [solid][>->] (-1.875,-2.875)--(-2.625,-3.625)node[midway,left]{1};

     \draw[densely dotted] (1.5,-2.5)--(3,-4);
    \draw [densely dotted][>-<] (1.875,-2.875)--(2.625,-3.625)node[midway,left]{3};

  %下面画点
   \fill (3,0) circle (.1);
   %\node at (-4,4)[left]{A};
   \fill (5,0) circle (.1);
   %\node at (-4,4)[left]{A};
   \fill (3,4) circle (.1);
   %\node at (3,4)[above]{$y_{1}$};
   \fill (1.5,2.5) circle (.1);
   %\node at (1.2,2){$x_{1}$};
   \fill (1.5,-2.5) circle (.1);
   \node at (1.5,-2)[left]{$x_{0}$};
   \fill (3,-4) circle (.1);
   \node at (3,-4)[right]{$y_{0}$};

   \fill (-3,0) circle (.1);
   \node at (-3,0)[right]{$x_{2}$};
   \fill (-5,0) circle (.1);
   \node at (-5,0)[left]{$y_{2}$};
   \fill (-3,4) circle (.1);
   %\node at (-3,4)[above]{$y_{2}$};
   \fill (-1.5,2.5) circle (.1);
   %\node at (-1,2){$x_{2}$};
   \fill (-1.5,-2.5) circle (.1);
   \node at (-1.5,-2)[right]{$x_{1}$};
   \fill (-3,-4) circle (.1);
   \node at (-3,-4)[left]{$y_{1}$};

  \end{tikzpicture}
     \caption{A $4$-NZF on $(CL_{6},\sigma_{4})$.}\label{-CL6}
  \end{minipage}%
\end{figure}

      We claim that $(CL_{4k+2},\sigma)$ admits a $4$-NZF, where $k\geq 0$ is an integer.
      For $k=0$, Fig. \ref{CL2} shows that $(CL_{4k+2},\sigma)$ admits a $4$-NZF.
      Now, suppose that $k\geq1$.
      Fig. \ref{-CL6} illustrates that $(CL_{6},\sigma_{4})$ admits a $4$-NZF $f_{4}$.
      By Lemma \ref{e-f}, the $(4(k-1),1)$-extender of $(CL_{6},\sigma_{4})$ admits a $4$-NZF.
      Since $(CL_{4k+2},\sigma)$ is isomorphic to the $(4(k-1),1)$-extender of $(CL_{6},\sigma_{4})$, it follows that $(CL_{4k+2},\sigma)$ admits a $4$-NZF.
   \end{proof}

\subsection{Flow number of signed Cayley graphs on
abelian groups of odd order.}
\

    In this subsection, we characterize the flow number of flow-admissible signed Cayley graphs on abelian groups of odd order.
    In order to present this characterization, we also characterize the flow number of flow-admissible signed Hamilton-decomposable graphs.
    A graph is termed \textit{Hamilton-decomposable} if it can be decomposed into several edge-disjoint Hamiltonian circuits.

    Recall that the \textit{flow number} of $(G,\sigma)$, denoted by $\Phi(G,\sigma)$, is the minimum $k$ such that $(G,\sigma)$ admits a $k$-NZF.
    The main result of this subsection is as follows.

    \begin{thm}\label{c-h-dec}
     Let $A$ be a finite abelian group of odd order and $\Gamma=Cay(A,S)$ is connected.
     If $(\Gamma,\sigma)$ is flow-admissible, then

      {\rm(1)} $\Phi(\Gamma,\sigma)=2$ if and only if $\left| E_{N}(\Gamma,\sigma)\right|$ is even;

      {\rm(2)} $\Phi(\Gamma,\sigma)=3$ if and only if $\left| E_{N}(\Gamma,\sigma)\right|$ is odd and $\frac{\left| S \right|}{2}\geq 3$;

      {\rm(3)} $\Phi(\Gamma,\sigma)=4$ if and only if $\left| E_{N}(\Gamma,\sigma)\right|$ is odd and $\frac{\left| S \right|}{2}=2$.
    \end{thm}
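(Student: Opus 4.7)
The plan begins with two structural observations about $\Gamma = Cay(A, S)$ when $|A|$ is odd: since every non-identity element has odd order, $S$ contains no involutions, so $|S| = 2k$ is even and $\Gamma$ is $|S|$-regular, in particular an even graph. Moreover, connected abelian Cayley graphs of odd order are Hamilton-decomposable (a classical Chen--Quimpo-type theorem), so $\Gamma$ decomposes into $k = |S|/2$ edge-disjoint Hamilton circuits $H_1, \ldots, H_k$, with $\sum_{i=1}^{k}|E_N(H_i)| = |E_N(\Gamma,\sigma)|$.

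The first technical step is a separate lemma: a connected even signed graph $(G, \sigma)$ admits a nowhere-zero $2$-flow if and only if $|E_N(G,\sigma)|$ is even. For the forward direction, given a $2$-NZF $(\tau, f)$, set $g(h) = \tau(h) f(e_h) \in \{\pm 1\}$; then $\sum_{h \in H_G(v)} g(h) = 0$ at every vertex, while regrouping by edges shows each positive edge contributes $g(h^u_e) + g(h^v_e) = 0$ and each negative edge contributes $\pm 2 f(e)$, forcing $|E_N|$ to be even. For the backward direction, traverse $G$ along an Eulerian tour maintaining a state $s \in \{\pm 1\}$ that flips on each negative edge and set $f(e) = s$ when traversing $e$, orienting positive edges by the tour direction and negative edges as bidirected-outward; the even parity of $|E_N|$ ensures $s$ returns to its starting value, giving a consistent $2$-NZF whose boundary vanishes at every vertex. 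Statement (1) then follows immediately because $\Gamma$ is connected and even.

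For statement (3), $|S|/2 = 2$ makes $\Gamma$ $4$-regular and $4$-edge-connected, so Theorem~\ref{4EN-6-flow} gives $\Phi(\Gamma,\sigma) \leq 4$, while (1) gives $\Phi \geq 3$ when $|E_N|$ is odd. To exclude $\Phi = 3$, I would assume a $\mathbb{Z}_3$-NZF $f$ exists and sum the local equations $\partial f(v) = 0$ across all vertices; the positive edges cancel and each negative edge contributes $\pm 2 f(e)$, yielding $\sum_{e \in E_N}\pm f(e) \equiv 0 \pmod 3$. Casework on the common residue class of these negative-edge flow values, combined with the rigid $4$-regular local structure (two half-edges at each vertex from each of the two Hamilton circuits), forces a zero flow value somewhere, contradicting the nowhere-zero property.

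For statement (2), with $|S|/2 \geq 3$ and $|E_N|$ odd, $\Phi \geq 3$ is from (1). To construct a $3$-NZF, I would first switch so that some Hamilton circuit $H_1$ is all-positive, picking a balanced $H_j$ if one exists or otherwise relocating negatives within an unbalanced circuit. Next, choose a negative edge $e^* \in E(\Gamma) \setminus E(H_1)$; then $(\Gamma \setminus e^*, \sigma)$ has an even number of negative edges and still contains the all-positive Hamilton circuit $H_1$, so Lemma~\ref{even-3-flow} produces a $3$-flow $f_1$ on $\Gamma \setminus e^*$ that is nonzero on $E(\Gamma) \setminus (E(H_1) \cup \{e^*\})$. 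The assumption $k \geq 3$ is essential here to guarantee a signed circuit through $e^*$ using a third Hamilton circuit; adding a $3$-flow supported on this signed circuit to $f_1$ covers $e^*$, and finally combining with a $2$-NZF on the all-positive $H_1$ produces a $3$-NZF on $\Gamma$. The main obstacle is calibrating magnitudes $\pm 1$ versus $\pm 2$ when combining these flows so that every edge receives a value in $\{\pm 1, \pm 2\}$, avoiding both $0$ and $\pm 3$, via a case analysis parallel to that in the proof of Theorem~\ref{BH-6-flow}.
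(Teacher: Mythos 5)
Your proposal rests on three steps that do not go through as sketched. First, the blanket claim that connected abelian Cayley graphs of odd order are Hamilton-decomposable is not a theorem: this is Alspach's conjecture \cite{A84}, open in general; only the $6$-regular odd-order case is known (Theorem~\ref{3-h-dec}, \cite{WLK09}). The paper therefore never uses a full Hamilton decomposition for $\frac{|S|}{2}>3$; instead it observes that $\Gamma$ is then $8$-edge-connected and invokes Theorem~\ref{3-NZF} to get a $3$-NZF directly, reserving the Hamilton-decomposition argument (Theorem~\ref{h-dec}) for the single case $\frac{|S|}{2}=3$. Your route for statement (2) with $k>3$ is built on an unproven hypothesis.

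Second, your exclusion of $\Phi=3$ in the $4$-regular case does not work: summing $\partial f(v)=0$ over all vertices only yields $\sum_{e\in E_N}\pm 2f(e)\equiv 0 \pmod 3$, and this is satisfiable with an odd number of negative edges (e.g.\ three negative edges each carrying value $1$ with positive signs), so no contradiction follows; the promised ``casework on the rigid local structure'' is not carried out and it is unclear it can be. The paper's argument is structural, via Theorem~\ref{euler}: for a signed Eulerian graph, $\Phi=3$ forces a decomposition into three Eulerian subgraphs sharing a common vertex, which is impossible when every vertex has degree $4$. Third, your construction of a $3$-NZF for $k=3$ is incomplete at exactly the hard point: adding a $3$-flow through $e^*$ to the $3$-flow from Lemma~\ref{even-3-flow}, and then ``combining'' with a $2$-NZF on $H_1$, produces values outside $\{\pm1,\pm2\}$ under all the standard combinations ($f_1+3f_2$ gives a $6$-flow, not a $3$-flow), and you acknowledge the calibration is unresolved. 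The paper avoids integer arithmetic here entirely: it verifies the criterion of Theorem~\ref{euler}(3) by exhibiting, via case analysis on how many of the three Hamilton circuits are unbalanced, three Eulerian subgraphs each with an odd number of negative edges meeting at a common vertex (splitting one Hamilton circuit into two paths between the ends of a chosen negative edge). Statement (1) of your proposal is fine, but it re-proves a special case of Theorem~\ref{euler}(2) that the paper simply cites.
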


   Let $A$ be an abelian group of odd order.
   By Lagrange's Theorem, for any $x\in A$, the order of $x$ is odd.
   Thus, there is no element $x\in A$ such that $x^{2}=1_{e}$, meaning there are no involutions in $A$.
   Consider the Cayley graph $\Gamma=Cay(A,S)$.
   Since $\left| A \right| $ is odd and $S$ is closed under taking inverses, it follows that $\left| S \right|$ is even.
   Therefore, $\Gamma=Cay(A,S)$ is an even graph.
   If $\Gamma$ is connected, then $\Gamma$ is Eulerian.
   Thus, the Cayley graph $\Gamma$ discussed in Theorem \ref{c-h-dec} is Eulerian.
   Consequently, the following result is necessary.

    \begin{thm}\cite{MS17}\label{euler}
      Let $(G,\sigma)$ be a signed Eulerian graph.
      Then

      {\rm(1)} $(G,\sigma)$ has no nowhere-zero flow if and only if $(G,\sigma)$ is unbalanced and $(G \setminus e,\sigma)$ is balanced for some edge $e$;

      {\rm(2)} $\Phi(G,\sigma) = 2$ if and only if $(G,\sigma)$ has an even number of negative edges;

      {\rm(3)} $\Phi(G,\sigma) = 3$ if and only if $(G,\sigma)$ can be decomposed into three Eulerian subgraphs, with an odd number of negative edges each, that share a common vertex;

      {\rm(4)} $\Phi(G,\sigma) = 4$ otherwise.
    \end{thm}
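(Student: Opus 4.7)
The plan is to prove the four parts in sequence, using earlier structural results for (1) and (2) and reserving the real work for (3) and (4).

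For (1), I would apply Proposition \ref{equi-flow-admissible}. Since $G$ is Eulerian, every edge lies on a circuit, so $G\setminus e$ stays connected for any $e$, and the clause ``$(G\setminus e,\sigma)$ has a balanced component'' collapses to ``$(G\setminus e,\sigma)$ is balanced.'' If $(G,\sigma)$ is balanced, a switching makes it all-positive, whereupon the Eulerian property gives an obvious $2$-NZF. If $(G,\sigma)$ is unbalanced, the equivalence (a)$\Leftrightarrow$(c) of Proposition \ref{equi-flow-admissible} is exactly the statement of (1). For (2), I would first note that on a graph with all even degrees, switching at a vertex $v$ flips the signs of $d(v)$ edges, an even number, so the parity of $|E_N(G,\sigma)|$ is a switching invariant. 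For the forward direction, summing $\partial f$ over all vertices of a putative $2$-NZF $(\tau,f)$ gives
\[
0=\sum_{e\in E(G)}\bigl(\tau(h^u_e)+\tau(h^v_e)\bigr)f(e)=\sum_{e\in E_N(G,\sigma)}(\pm 2)f(e),
\]
since positive edges contribute $0$; dividing by $2$ shows a sum of $\pm 1$'s vanishes, so $|E_N(G,\sigma)|$ is even. For the backward direction I would fix an Euler tour of $G$ and assign $f\equiv 1$, orienting each half-edge by the walking direction; the walker's effective direction flips at each negative edge, and because the number of negative edges encountered along the tour is even, the orientation closes up consistently.

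For (3) and (4) I would first establish the upper bound $\Phi(G,\sigma)\le 4$ for any flow-admissible signed Eulerian graph, by taking a signed-circuit cover of $E(G)$ (balanced circuits contribute via $2$-flows, short and long barbells via $3$-flows) and combining weighted $2$-flows to obtain a $4$-NZF. The $(\Leftarrow)$ direction of (3) is then constructive: given a decomposition $E(G)=E(H_1)\sqcup E(H_2)\sqcup E(H_3)$ with each $H_i$ Eulerian, each $|E_N(H_i,\sigma)|$ odd, and $v_0\in V(H_1)\cap V(H_2)\cap V(H_3)$, I would choose an Euler tour of each $H_i$ starting at $v_0$ and let $f_i\equiv 1$ with the tour-induced orientation. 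The argument of (2) shows $\partial f_i(v)=0$ for $v\ne v_0$ and $\partial f_i(v_0)=\pm 2$ (the defect arising precisely from the odd count of sign-flips). With scalars $(\alpha_1,\alpha_2,\alpha_3)=(1,1,2)$ and a suitable choice of orientation sign for each $f_i$, the identity $\alpha_1 d_1+\alpha_2 d_2+\alpha_3 d_3=0$ with $d_i\in\{\pm 2\}$ is always realizable (e.g.\ $+2+2-2\cdot 2=0$), so $g=\sum\alpha_i f_i$ extended by $0$ off each $E(H_i)$ is a flow on $(G,\sigma)$ with values in $\{\pm 1,\pm 2\}$, a $3$-NZF. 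Part (4) then follows: if $|E_N|$ is odd then $\Phi\neq 2$ by (2); if no tripartition as in (3) exists, then $\Phi\neq 3$ by the contrapositive of $(\Leftarrow)$; combined with $\Phi\le 4$, we obtain $\Phi=4$.

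The main obstacle is the $(\Rightarrow)$ direction of (3). Given a $3$-NZF $f\colon E(G)\to\{\pm 1,\pm 2\}$, one must produce three Eulerian subgraphs sharing a common vertex with odd negative-edge counts each. The naive partition $|f|=1$ versus $|f|=2$ yields only two pieces, and neither restriction is itself a flow, so a more delicate construction is needed. My plan is to view $f$ via a positive-integer decomposition into signed circuits (each balanced circuit, short barbell, or long barbell carrying weight $1$ or $2$), then aggregate these signed circuits into three blocks by following paths that all meet a chosen hub vertex $v_0$; the parity accounting of how many unbalanced circuits land in each block is what enforces the ``odd negative-edge count'' in each $H_i$, and the chain of connecting paths secures the common-vertex condition. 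Verifying that such a hub and grouping can always be found for any $3$-NZF on a flow-admissible Eulerian signed graph with $|E_N|$ odd is the technical heart of the argument.
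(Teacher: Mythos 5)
First, note that the paper offers no proof of this statement: Theorem \ref{euler} is quoted verbatim from \cite{MS17} and used as a black box, so there is no internal argument to compare yours against. Judged on its own terms, your treatment of parts (1) and (2) is sound: the reduction of (1) to Proposition \ref{equi-flow-admissible} (using that a connected even graph is bridgeless, so $G\setminus e$ stays connected), the double-counting of $\sum_v \partial f(v)$ for the forward direction of (2), and the Euler-tour construction for the backward direction are all correct. The $(\Leftarrow)$ direction of (3) --- rooting an Euler tour of each $H_i$ at the common vertex, observing that the odd negative-edge count forces a boundary defect of exactly $\pm 2$ at the root and $0$ elsewhere, and cancelling the three defects with coefficients $(1,1,2)$ --- is likewise a complete and correct argument.

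The proposal has two genuine gaps, and they sit exactly where the real content of \cite{MS17} lives. First, the upper bound $\Phi(G,\sigma)\le 4$ is asserted by ``combining weighted $2$-flows'' obtained from a signed-circuit \emph{cover}. A cover is not a decomposition: the circuits overlap, and on an edge lying in several of them the weighted sum of the individual flows can vanish or exceed $3$ in absolute value, so no nowhere-zero $4$-flow results without a substantive argument about how the cover is chosen and how the pieces are combined. Second, and more seriously, the $(\Rightarrow)$ direction of (3) is not proved but only planned (you call it ``the technical heart of the argument''): no mechanism is supplied for converting an arbitrary $3$-NZF into three edge-disjoint \emph{connected} Eulerian subgraphs, each with an odd number of negative edges, all passing through one vertex. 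Note in particular that the subgraphs induced by $|f|=1$ and $|f|=2$ are even but need not be connected, and neither carries a flow by itself, so the ``hub and grouping'' step is precisely the nontrivial part. Since your part (4) is deduced from the contrapositive of exactly this missing implication (you wrote ``contrapositive of $(\Leftarrow)$,'' but what is needed is the contrapositive of $(\Rightarrow)$) together with the unestablished bound $\Phi\le 4$, parts (3)$(\Rightarrow)$ and (4) both remain open in your write-up; to close them you would need to reconstruct the corresponding arguments of \cite{MS17}, or simply cite that paper as the authors do.
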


    Alspach \cite{A84} conjectured that any $2k$-regular connected Cayley graph on an abelian group has a Hamiltonian decomposition.
    Westlund et al. \cite{WLK09} validated Alspach's conjecture for the case \(k=3\), under the condition that the abelian group has an odd order.

    \begin{thm}\cite{WLK09} \label{3-h-dec}
    Every connected $6$-regular Cayley graph on an abelian group of odd order is decomposable into three Hamiltonian circuits.
    \end{thm}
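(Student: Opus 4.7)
The plan is to exploit the canonical 2-factor decomposition arising from the generating set and modify it locally into three edge-disjoint Hamiltonian circuits. Since $|A|$ is odd, $A$ contains no involutions, so the connection set splits into three inverse-closed pairs $S=\{s_1,s_1^{-1},s_2,s_2^{-1},s_3,s_3^{-1}\}$. Each pair $\{s_i,s_i^{-1}\}$ determines a 2-factor $F_i$ of $\Gamma$ whose components are precisely the cosets of $\langle s_i\rangle$, each forming a cycle of length $|s_i|$, and $F_1, F_2, F_3$ partition $E(\Gamma)$. Connectedness of $\Gamma$ is equivalent to $\langle s_1, s_2, s_3\rangle = A$, and Lagrange's theorem forces every $|s_i|$ to be odd.

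I would first dispose of the reduction case where $\langle s_i\rangle = A$ for some $i$: then $F_i$ is already a Hamilton cycle, and $F_j\cup F_k$ is a connected $4$-regular abelian Cayley graph on $A$. One invokes the known Hamilton decomposition theorem for $4$-regular connected abelian Cayley graphs to split $F_j\cup F_k$ into two Hamilton cycles, finishing this subcase. This reduces the problem to the situation in which every $\langle s_i\rangle$ is a proper subgroup of $A$, so each $F_i$ decomposes into $[A:\langle s_i\rangle]\ge 2$ cycles.

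For the substantive case I would argue by induction on $|A|$. Fix an index, say $k=3$, and look at the quotient group $A/\langle s_3\rangle$; the edges of $F_1\cup F_2$ project to a Cayley (multi)graph on this quotient. Using the inductive hypothesis, or in the small base cases an explicit construction, one obtains enough Hamiltonian structure on the quotient to organize a splicing scheme. The core operation is a local swap at a coset boundary: if $v$ is a vertex where a cycle $C$ of $F_i$ meets a cycle $C'$ of $F_i$ across a $\pm s_j$-edge, swapping two $F_i$-edges at $v$ with two $F_j$-edges merges $C\cup C'$ into a longer cycle while keeping the resulting edge sets spanning and $2$-regular. Performed along a spanning tree of the quotient, such swaps can in principle fuse the components of each $F_i$ into a single Hamilton cycle.

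The main obstacle is the splicing step: one must verify that, after the swaps, each of the three modified 2-factors is a \emph{single} cycle of length $|A|$ rather than a disjoint union of shorter ones, and that edge-disjointness is preserved across all three. The analysis requires tracking how cycles in the quotient lift to cycles in $\Gamma$, which depends delicately on the orders $|s_i|$ and on the coset intersections $\langle s_i\rangle\cap \langle s_j\rangle$. The odd-order hypothesis is exactly what makes this tractable: it eliminates 2-torsion elements in $A$ and forces all cycle lengths $|s_i|$ to be odd, ruling out the antipodal/parity obstructions that keep the analogous even-order statement (a special case of Alspach's general conjecture) open.
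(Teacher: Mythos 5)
Note first that the paper does not prove this statement at all: it is imported verbatim from Westlund, Liu and Kreher \cite{WLK09}, whose proof is a substantial standalone argument. So the only question is whether your sketch would itself constitute a proof, and it does not.

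There are two problems. The lesser one is that your reduction case is flawed: from $\langle s_i\rangle=A$ you conclude that $F_j\cup F_k$ is a \emph{connected} $4$-regular Cayley graph on $A$, but connectedness of $F_j\cup F_k$ requires $\langle s_j,s_k\rangle=A$, which is independent of $\langle s_i\rangle=A$. For instance, in $A=\mathbb{Z}_{27}$ with $s_1=1$, $s_2=3$, $s_3=9$, the factor $F_1$ is already a Hamiltonian circuit, yet $F_2\cup F_3$ has three components, so the $4$-regular Hamilton-decomposition theorem cannot be applied to it, and a disconnected subgraph certainly does not split into two Hamiltonian circuits of $\Gamma$. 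The more fundamental problem is that the substantive case is not proved: you describe a splicing scheme along a spanning tree of the quotient $A/\langle s_3\rangle$ and then state yourself that ``the main obstacle'' is verifying that, after the swaps, each of the three modified $2$-factors is a single spanning circuit and that edge-disjointness survives. That verification is precisely the content of the theorem --- it is where the odd-order hypothesis and the interaction of the subgroups $\langle s_i\rangle$ must actually be used --- and leaving it as an acknowledged obstacle means you have an outline, not a proof. For the purposes of this paper the correct move is the one the authors make: cite \cite{WLK09} and use the statement as a black box.
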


    Thus, to prove Theorem \ref{c-h-dec}, we need to characterize the flow number of flow-admissible signed Hamilton-decomposable graphs.
    If $G$ can be decomposed into $l$ edge-disjoint Hamiltonian circuits, then $G$ is $2l$-edge-connected.
    Therefore, the following result is necessary.

    \begin{thm}\cite{WYZZ14}\label{3-NZF}
      Every flow-admissible $8$-edge-connected signed graph admits a nowhere-zero $3$-flow.
    \end{thm}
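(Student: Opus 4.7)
The plan is to reduce the statement to Thomassen's celebrated theorem that every $8$-edge-connected graph is $\mathbb{Z}_3$-connected: for any ordinary graph $H$ with $\lambda(H) \geq 8$ and any boundary $\beta : V(H) \to \mathbb{Z}_3$ with $\sum_{v \in V(H)} \beta(v) \equiv 0 \pmod 3$, one can orient $H$ so that $d^+(v) - d^-(v) \equiv \beta(v) \pmod 3$ at every vertex. My first step would be to recast the existence of a nowhere-zero $3$-flow on $(G,\sigma)$ as a modular-orientation problem: since $\mathbb{Z}_3 \setminus \{0\} = \{\pm 1\}$, a $\mathbb{Z}_3$-NZF is equivalent to an orientation $\tau$ of $(G,\sigma)$ (i.e.\ a map $H(G) \to \{\pm 1\}$ satisfying $\tau(h^{u}_{e})\tau(h^{v}_{e}) = -\sigma(e)$) for which $\sum_{h \in H_G(v)} \tau(h) \equiv 0 \pmod 3$ at every vertex, and for signed graphs this $\mathbb{Z}_3$-NZF is in turn equivalent to an integer $3$-NZF.

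Next I would fix a reference orientation $\tau_0$ of $(G,\sigma)$ and define the boundary $\beta_{\sigma,\tau_0}(v) = \sum_{h \in H_G(v)} \tau_0(h) \pmod 3$. Any other signed orientation of $(G,\sigma)$ arises from $\tau_0$ by reversing the two half-edges of each edge in some chosen subset $F \subseteq E(G)$, and reversing one edge changes the pair of vertex boundaries by exactly the contribution of a ``flip'' in an ordinary orientation of $G$. Thus the search for $\tau$ realising zero boundary mod $3$ is equivalent to the search for an orientation of the underlying graph $G$ with prescribed vertex imbalance $-\beta_{\sigma,\tau_0}$. Because every edge contributes $+1$ to one endpoint and $-1$ to the other under $\tau_0$, we automatically have $\sum_{v \in V(G)} \beta_{\sigma,\tau_0}(v) \equiv 0 \pmod 3$, which is exactly the compatibility hypothesis of Thomassen's theorem. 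Applying that theorem to $G$, whose edge-connectivity is at least $8$, produces an orientation realising the required imbalance, and translating back through $\tau_0$ yields a modular orientation of $(G,\sigma)$ and therefore a nowhere-zero $3$-flow.

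The main technical obstacle will be verifying that the flow-admissibility hypothesis is used correctly when $(G,\sigma)$ is unbalanced. For a balanced signed graph one may switch to the all-positive case and apply Thomassen's theorem to $G$ directly; the subtlety in the unbalanced case is that a given $\beta_{\sigma,\tau_0}$ might, a priori, fail to be realisable as the boundary of a \emph{nowhere-zero} orientation on a component that detaches under edge removal. By Proposition \ref{equi-flow-admissible}, flow-admissibility is precisely the condition that no edge $e$ leaves a balanced component behind in $(G \setminus e, \sigma)$, and this is exactly what is needed to ensure that the realised orientation has no zero half-edge contributions at vertices of degenerate subgraphs. Carrying out this bookkeeping cleanly --- possibly by passing to the signed double cover, verifying that $8$-edge-connectivity lifts, and checking the compatibility of the boundary in the cover --- constitutes the crux of the argument.
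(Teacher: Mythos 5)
The paper does not prove this statement at all: it is imported verbatim from Wu--Ye--Zang--Zhang \cite{WYZZ14}, so there is no internal proof to compare against, and your proposal must stand on its own. It does not, because its very first reduction is false: for \emph{signed} graphs a nowhere-zero $\mathbb{Z}_3$-flow is \emph{not} equivalent to an integer nowhere-zero $3$-flow. Tutte's modular--integer equivalence is special to ordinary graphs; for signed graphs the two notions genuinely diverge, and the cleanest witness is already in this paper. Every signed Eulerian graph admits a $\mathbb{Z}_2$-NZF (assign $1$ to every edge; in $\mathbb{Z}_2$ each vertex boundary is $d(v)\equiv 0$ for any orientation), yet by Theorem \ref{euler}(2) it admits an integer $2$-NZF only when the number of negative edges is even. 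The same phenomenon persists for $k=3$, and bridging the gap from a modulo flow to an integer flow is precisely the hard content of \cite{WYZZ14} (compare Lemma \ref{Z2-3-flow} from \cite{CLLZ18}, which needs connectivity \emph{and} a parity hypothesis on the negative edges in the support, and even then converts a $\mathbb{Z}_2$-flow only into a $3$-flow). Your sentence ``for signed graphs this $\mathbb{Z}_3$-NZF is in turn equivalent to an integer $3$-NZF'' is exactly the step that cannot be waved through.

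There is a second, independent failure in the orientation bookkeeping. Under an orientation $\tau$ of $(G,\sigma)$ a positive edge has half-edges of opposite signs and contributes $0$ to $\sum_{v}\partial f(v)$, but a negative edge satisfies $\tau(h^{u}_{e})\tau(h^{v}_{e})=+1$, so both its half-edges point the same way and it contributes $\pm 2 f(e)$ to the total. Hence $\sum_v \beta_{\sigma,\tau_0}(v)\equiv 0 \pmod 3$ is \emph{not} automatic, contrary to your claim, and reversing a negative edge shifts the boundary at \emph{both} endpoints by the same amount $\mp 2$ --- a move with no counterpart among reorientations of the underlying ordinary graph $G$. Consequently the realization problem you pose is not the prescribed-imbalance problem that Thomassen's $\mathbb{Z}_3$-connectivity theorem solves, and the reduction collapses even before the modular-to-integer issue arises. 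Your closing paragraph correctly senses that flow-admissibility must enter somewhere, but neither Proposition \ref{equi-flow-admissible} nor a passage to the double cover supplies the missing conversion: a modular flow upstairs does not descend to a nowhere-zero integer $3$-flow on the signed graph without an equivariance argument that is itself the crux. A correct proof along modular-orientation lines exists, but it is the substantially more delicate argument of \cite{WYZZ14}, not this one.
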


   A path $P$ is referred to as an \textit{xy-path} if it connects the vertices $x$ and $y$.
   The characterization of the flow number of flow-admissible signed Hamilton-decomposable graphs is as follows.

   \begin{thm}\label{h-dec}
    Let graph $G$ be $2k$-regular and Hamilton-decomposable.
     If $(G,\sigma)$ is flow-admissible, then

      {\rm(1)} $\Phi(G,\sigma)=2$ if and only if $\left| E_{N}(G,\sigma)\right|$ is even;

      {\rm(2)} $\Phi(G,\sigma)=3$ if and only if $\left| E_{N}(G,\sigma)\right| $ is odd and $k\geq 3$;

      {\rm(3)} $\Phi(G,\sigma)=4$ if and only if $\left| E_{N}(G,\sigma)\right|$ is odd and $k=2$.
  \end{thm}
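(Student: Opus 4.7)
The plan is to invoke Theorem \ref{euler}, which applies since a $2k$-regular Hamilton-decomposable graph $G$ is Eulerian. Part (1) is then immediate from Theorem \ref{euler}(2). For part (3) with $k=2$ and $|E_N(G,\sigma)|$ odd, Theorem \ref{euler}(2) rules out $\Phi=2$; the case $\Phi=3$ is excluded by a degree count, since Theorem \ref{euler}(3) would require three Eulerian subgraphs sharing a common vertex $v$ and each contributing a positive even degree at $v$, forcing $\deg_G(v)\geq 6$ and contradicting $2k=4$. Theorem \ref{euler}(4) then yields $\Phi=4$.

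For part (2), Theorem \ref{euler}(2) again gives $\Phi\neq 2$, so it suffices to produce a 3-NZF. When $k\geq 4$, the $k$ edge-disjoint Hamiltonian circuits provide $2k\geq 8$ edge-disjoint paths between any pair of vertices (two arcs from each circuit), so $G$ is 8-edge-connected and Theorem \ref{3-NZF} applies directly. When $k=3$, write $G=H_1\cup H_2\cup H_3$ and let $p_i\equiv |E_N(H_i)|\pmod 2$; since $p_1+p_2+p_3\equiv 1\pmod 2$, either all three $p_i$ are odd or exactly one is. If all three are odd, then $H_1$, $H_2$, $H_3$ themselves are three Eulerian subgraphs (Hamilton circuits) with odd negative-edge count sharing every vertex, and Theorem \ref{euler}(3) delivers $\Phi=3$.

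The main obstacle is the $k=3$ sub-case where exactly one $p_i$ is odd, say $p_1=1$ and $p_2=p_3=0$. Here I would re-decompose $E(G)$ into three Eulerian subgraphs $F_1$, $F_2$, $F_3$ with odd $|E_N|$ sharing a common vertex. When $|E_N(H_2\cup H_3)|\geq 2$, I would keep $F_1=H_1$ and split the $4$-regular graph $H_2\cup H_3$ into two closed trails with odd negative count each, by extracting a closed trail through a chosen negative edge and adjusting along $H_2$-$H_3$ alternations to keep both halves connected and to share a vertex with $H_1$. When all negative edges lie in $H_1$, flow-admissibility (via Proposition \ref{equi-flow-admissible}) forces $|E_N(H_1)|\geq 3$, and I would construct three edge-disjoint closed trails, each containing one of three distinguished negative edges of $H_1$ and closing up through positive-edge paths in the 4-regular all-positive graph $H_2\cup H_3$ so as to secure both connectivity and a common vertex. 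The delicate step is arranging Eulerian connectivity of each $F_i$ together with the common-vertex condition; the abundance of $uv$-paths in $H_2\cup H_3$ between any two vertices of $H_1$ is the key structural fact that should make this re-decomposition feasible.
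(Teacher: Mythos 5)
Your overall skeleton coincides with the paper's: parts (1) and (3) and the case $k\geq 4$ of part (2) are handled exactly as in the paper (Theorem \ref{euler} plus the degree obstruction for $k=2$, and Theorem \ref{3-NZF} via $8$-edge-connectivity for $k\geq 4$), and the sub-case of $k=3$ in which all three Hamiltonian circuits carry an odd number of negative edges is also identical. The problem is that the remaining sub-case of $k=3$ --- exactly one circuit with odd negative-edge parity --- is the heart of the theorem, and there your proposal is only a plan with genuine gaps. First, you never normalize by switching: the paper makes $C^2$ (and, in one branch, $C^3$) all-positive before constructing anything, and this is precisely what makes it possible to guarantee that each piece of the re-decomposition has an \emph{odd} number of negative edges. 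Without that normalization, negative edges may be scattered through both $H_2$ and $H_3$, and your ``split $H_2\cup H_3$ into two closed trails with odd negative count each, adjusting along $H_2$--$H_3$ alternations'' is an aspiration, not an argument.

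Second, in the branch where all negative edges lie in $H_1$, your plan of ``three edge-disjoint closed trails, each containing one of three distinguished negative edges, closing up through positive paths in $H_2\cup H_3$'' does not meet the hypothesis of Theorem \ref{euler}(3), which requires a \emph{decomposition} of $E(G)$ into three Eulerian subgraphs: three trails built this way leave most of $E(G)$ uncovered, and you give no mechanism for absorbing the leftover edges (including the remaining negative edges of $H_1$ when $\lvert E_N(H_1)\rvert>3$) while keeping each part connected, even, of odd negative count, and through a common vertex. The paper's construction avoids all of this by distinguishing only \emph{two} negative edges $e_1=u_1v_1$ and $e_2=u_2v_2$ of $C^1$, taking the two $u_1v_1$-arcs $P^2_\alpha,P^2_\beta$ of $C^2$ and the two $u_2v_2$-arcs $P^3_\gamma,P^3_\delta$ of $C^3$ with $P^3_\delta$ chosen to pass through $u_1$, and using the three parts $(C^1\setminus\{e_1,e_2\})\cup P^2_\alpha\cup P^3_\gamma$, $P^2_\beta\cup\{e_1\}$, and $P^3_\delta\cup\{e_2\}$; this is a genuine partition of $E(G)$, each part visibly has an odd number of negative edges, and all three contain $u_1$. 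An analogous explicit construction ($P^2_a\cup\{e\}$, $P^2_b\cup(C^3\setminus e)$, $C^1$ for a negative edge $e=uv$ of $C^3$) settles the other branch. You would need to supply constructions of this completeness for your proof to close.
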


    \begin{proof}
      Statement (1) follows directly as a corollary of Theorem \ref{euler}.
      If $(G,\sigma)$ contains an odd number of negative edges, then according to Theorem \ref{euler}, $3\leq\Phi(G,\sigma)\leq4$.
      Since a $4$-regular graph cannot be decomposed into three Eulerian subgraphs, this confirms the validity of Statement (3).
      According to Theorem \ref{3-NZF}, if $(G,\sigma)$ has an odd number of negative edges and $k>3$, then $(G,\sigma)$ admits a $3$-NZF because $G$ is $8$-edge-connected.
      Therefore,  it is sufficient to prove that $\Phi(G,\sigma)=3$ when $\left| E_{N}(G,\sigma)\right| $ is odd and $k= 3$.

      Consider three edge-disjoint Hamiltonian circuits $C^{1}$, $C^{2}$, and $C^{3}$ in $G$, such that their edge sets satisfy $E(C^{1})\cup E(C^{2})\cup E(C^{3})=E(G)$.
      If all circuits in $\{C^{1}, C^{2}, C^{3}\}$ are unbalanced, then $(G,\sigma)$ admits a $3$-NZF by Statement (3) of Theorem \ref{euler}.
      If only two circuits in $\{C^{1}, C^{2}, C^{3}\}$ are unbalanced, then $(G,\sigma)$ has an even number of negative edges, leading to a contradiction.
      Therefore, it remains to prove that if there is only one unbalanced circuit, say $C_{1}$, within $\{C^{1}, C^{2}, C^{3}\}$, then $(G,\sigma)$ admits a $3$-NZF.
      Without loss of generality, assume that $(C^{2},\sigma)$ is all-positive; if not, we switch at certain vertices of $C^{2}$ to make all its edges positive.
      We will consider two cases based on the signature of $(C^{3},\sigma)$.\\

      \textbf{Case 1.} $(C^{3},\sigma)$ is not all-positive.

      There exists a negative edge $e$ within $E(C^{3})$, and $\left| E_{N}(C^{3},\sigma)\right|$ is even.
      Let the ends of $e$ be $u$ and $v$.
      Since $C^{2}$ is a Hamiltonian circuit, it can be decomposed into two edge-disjoint $uv$-paths $P^{2}_{a}$ and $P^{2}_{b}$.
      Consequently, $P^{2}_{a}\cup \{e\}$ and $P^{2}_{b}\cup (C^{3}\setminus e)$ form two Eulerian subgraphs of $G$, each containing an odd number of negative edges.
      Thus, $(G,\sigma)$ can be decomposed into three Eulerian subgraphs $P^{2}_{a}\cup  e$, $P^{2}_{b}\cup (C^{3}\setminus e)$, and $C^{1}$, each having an odd number of negative edges and sharing the common vertices $u$ and $v$.
      Therefore, $\Phi(G,\sigma)=3$ by Statement (3) of Theorem \ref{euler}.\\

      \textbf{Case 2.} $(C^{3},\sigma)$ is all-positive.

      Since $(G,\sigma)$ is flow-admissible and $\left| E_{N}(G,\sigma)\right|$ is odd, there are at least three negative edges in $E_{N}(G,\sigma)$.
      Moreover, because both $(C^{2},\sigma)$ and $(C^{3},\sigma)$ are all-positive, there are at least three negative edges in $(C^{1},\sigma)$.
      Let $e_{1}$ and $e_{2}$ denote two negative edges in $(C^{1},\sigma)$.
      Let the ends of $e_{1}$ be $u_{1}$ and $v_{1}$.
      Then, in $C^{2}$, there exist two $u_{1}v_{1}$-paths, denoted by $P^{2}_{\alpha}$ and $P^{2}_{\beta}$.
      Let the ends of $e_{2}$ be $u_{2}$ and $v_{2}$.
      Thus, in $C^{3}$, there exist two $u_{2}v_{2}$-paths, denoted by $P^{3}_{\gamma}$ and $P^{3}_{\delta}$.
      Since $C^{3}$ is a Hamiltonian circuit of $G$, one of paths $P^{3}_{\gamma}$ or $P^{3}_{\delta}$ contains $u_{1}$ as a vertex, say $P^{3}_{\delta}$.
      Consequently, $(G,\sigma)$ can be decomposed into three Eulerian subgraphs: $(C^{1}\setminus \{e_{1},e_{2}\})\cup P^{2}_{\alpha} \cup P^{3}_{\gamma}$, $P^{2}_{\beta} \cup \{e_{1}\}$, and $P^{3}_{\delta}\cup \{e_{2}\}$.
      Each subgraph contains an odd number of negative edges and share a common vertex $u_{1}$.
      Therefore, by Statement (3) of Theorem \ref{euler}, $\Phi(G,\sigma)=3$.
    \end{proof}

   Now, we can complete the proof of Theorem  \ref{c-h-dec}.

    \begin{proof}[Proof of Theroem \ref{c-h-dec}]

      Given that $\Gamma$ is Eulerian, $\Phi(G,\sigma)=2$ if and only if $\left| E_{N}(\Gamma,\sigma)\right|$ is even, by Theorem \ref{euler}.
      Thus, Statement (1) holds.

      If $(\Gamma,\sigma)$ contains an odd number of negative edges, then $3\leq\Phi(G,\sigma)\leq4$, according to  Theorem \ref{euler}.
      Given that $\Gamma$ is $\left|S\right|$-regular,  it cannot be decomposed into three Eulerian subgraphs when $\frac{\left|S\right|}{2}=2$.
      Thus, Statement (3) holds.

      According to Theorem \ref{3-NZF}, if $(\Gamma,\sigma)$ has an odd number of negative edges and $\frac{\left|S\right|}{2}>3$, then $(\Gamma,\sigma)$ admits a $3$-NZF because $\Gamma$ is $8$-edge-connected.
      Therefore, it is sufficient to consider cases where $\left| E_{N}(\Gamma,\sigma)\right| $ is odd and $\frac{\left|S\right|}{2}= 3$.
      According to Theorem \ref{3-h-dec}, $(\Gamma,\sigma)$ is a flow-admissible signed Hamilton-decomposable graph.
      Thus, $(\Gamma,\sigma)$ admits a $3$-NZF because the Statement (2) of Theorem \ref{h-dec}.
    \end{proof}

\section*{Acknowledgements}

This work is supported by National Natural Science Foundation of China (Grant No 12461006), Guizhou Provincial Basic Research Program (Grant No. ZD[2025]085) and Scientic Research Foundation of Guizhou University(Grant No. [2023]41).

\section*{Declarations}
{\bf Conflict of interest} The authors declare that they have no conflict of interest.


\begin{thebibliography}{99}
\bibitem{A84}
    B. Alspach, {\rm Research Problem 59}, {\it Discrete Math.}, 50(1984) 115.

%\bibitem{ALZ96}
%    B. Alspach, Y. Liu, C.-Q. Zhang, {\rm Nowhere-zero 4-flows and Cayley graphs on solvable groups}, {\it SIAM J. Discrete Math.}, 9(1996) 151--154.


\bibitem{BM08}
    J. A. Bondy, U. S. R. Murty, {\rm Graph theory}, {\it Springer}, 2008.

\bibitem{B83}
    A. Bouchet, {\rm Nowhere-zero integral flows on bidirected graph}, {\it J. Combin. Theory Ser. B}, 34(1983) 279--292.

\bibitem{CLLZ18}
    J. Cheng, Y. Lu, R. Luo, C.-Q. Zhang, {\rm Signed graphs: from modulo flows to integer-valued flows}, {\it SIAM J. Discrete Math.}, 32(2018) 956--965.

\bibitem{DLLLZZ21}
     M. DeVos, J. Li, Y. Lu, R. Luo, C.-Q. Zhang, Z. Zhang, {\rm Flows on flow-admissible signed graphs}, {\it J. Combin. Theory Ser. B}, 149(2021) 198--221.

\bibitem{J88}
     F. Jaeger, {\rm Nowhere-zero flow problems, in Selected Topics in Graph Theory III}, {\it Academic Press, San Diego, CA}, 1988 71-95.

\bibitem{LLLZZ23}
    L. Li, C. Li, R. Luo, C-Q. Zhang, H. Zhang, {\rm Flows of $3$-edge-colorable cubic signed graphs}, {\it Eur. J. Combin.}, 108(2023) 103627.


\bibitem{LMSZ25}
    R. Luo, E. M\'{a}\v{c}ajov\v{a}, M. \v{S}koviera, and C-Q. Zhang, {\rm An 8-flow theorem for signed graphs}, {\it SIAM J. Discrete Math.}, 39(2025) 1409--1417.

\bibitem{MS15}
    E. M\'{a}\v{c}ajov\v{a}, M. \v{S}koviera, {\rm Remarks on nowhere-zero flows in signed cubic graphs}, {\it Discrete Math.}, 338(2015) 809--815.

\bibitem{MS17}
    E. M\'{a}\v{c}ajov\v{a}, M. \v{S}koviera, {\rm Nowhere-zero flows on signed Eulerian graphs}, {\it SIAM J. Discrete Math.}, 31(2017) 1937--1952.

\bibitem{NS09}
    M. N\'{a}n\'{a}siov\'{a}, M. \v{S}koviera, {\rm Nowhere-zero flows in Cayley graphs and Sylow 2-subgroups}, {\it J. Algebraic Comb.}, 30(2009) 103--110.

\bibitem{PSS05}
    P. Poto\v{c}nik, M. \v{S}koviera and R. \v{S}krekovski, {\rm Nowhere-zero 3-flows in abelian Cayley graphs}, {\it Discrete Math.}, 297(2005) 119--127.

\bibitem{RZ11}
     A. Raspaud, X. Zhu, {\rm Circular flow on signed graphs}, {\it J. Combin. Theory Ser. B}, 101(2011) 464--479.

\bibitem{SS15}
     M. Schubert, E. Steffen, {\rm Nowhere-zero flows on signed regular graphs}, {\it Eur. J. Combin.}, 48(2015) 34--47.

\bibitem{T49}
     W. T. Tutte, {\rm On the embedding of linear graphs in surfaces}, {\it Proc. London Math. Soc.}, 51(1949) 474--483.

\bibitem{T54}
     W. T. Tutte, {\rm A contribution to the theory of chromatic polynomial}, {\it Canad. J. Math.}, 6(1954) 80--91.


\bibitem{WYZZ14}
    Y. Wu, D. Ye, W. Zang, and C-Q. Zhang, {\rm Nowhere-zero 3-flows in signed graphs}, {\it SIAM J. Discrete Math.}, 28(2014) 1628--1637.

\bibitem{WLK09}
    E. E. Westlund, J. Liu, D.L. Kreher, {\rm $6$-regular Cayley graphs on Abelian groups of odd order are hamiltonian decomposable}, {\it Discrete Math.}, 309(2009) 5106--5110.

\bibitem{Z97}
    C.-Q. Zhang, {\rm Integer Flows and Cycle Covers of Graphs}, {\it Marcel Dekker}, New York, 1997.

\bibitem{Z87}
    O. Z\'{y}ka, {\rm Nowhere-zero 30-flow on bidirected graphs (Thesis)}, {\it Charles University}, Praha, 1987, KAM-DIMATIA Series.




\end{thebibliography}
\end{document}